\documentclass[12pt,reqno]{NumPDEsArticle}

\usepackage[T1]{fontenc}
\usepackage[english]{babel}

\usepackage{NumPDEsMacros}

\usepackage{csquotes}
\usepackage{enumitem}
\usepackage{amsmath,amssymb}
\usepackage{orcidlink}
\usepackage{diagbox,graphicx}
\usepackage{stmaryrd}

\DeclareMathOperator*{\argmin}{argmin}

\title[AFEM with PGMRES]{Adaptive finite element methods with optimally preconditioned GMRES guarantee 
optimal complexity}

\author{Thomas Führer\orcidlink{0000-0001-5034-6593}}
\address{Facultad de Matem\'{a}ticas, Pontificia 
Universidad Cat\'{o}lica de Chile, Santiago, Chile}
\email{\tt thfuhrer@uc.cl}

\author{Paula Hilbert\orcidlink{0009-0005-0105-1066}}

\author{Ani Miraçi\orcidlink{0000-0003-4962-9662}}
\address{Sorbonne Université, Université Paris Cité, CNRS, Laboratoire Jacques-Louis Lions, LJLL, F-75005 Paris, France}
\email{\tt ani.miraci@sorbonne-universite.fr}

\author{Dirk Praetorius\orcidlink{0000-0002-1977-9830}}
\address{TU Wien, Institute of Analysis and Scientific Computing, Wiedner Hauptstr. 8--10/E101/4, 1040 Vienna, Austria}

\email{\tt paula.hilbert@asc.tuwien.ac.at \quad {\normalfont{(corresponding author)}}}
\email{\tt dirk.praetorius@asc.tuwien.ac.at}

\subjclass[2020]{41A25, 65N15, 65N30, 65F10, 65Y20, 68W40}
\keywords{adaptive finite element method, non-symmetric problem, optimal convergence rates, cost-optimality, inexact solver, full linear convergence}

\thanks{This research was funded by the Austrian Science Fund (FWF) projects
\href{https://www.fwf.ac.at/en/research-radar/10.55776/F65}{10.55776/F65} (SFB
F65 ``Taming complexity in PDE systems''),
\href{https://www.fwf.ac.at/en/research-radar/10.55776/I6802}{10.55776/I6802}
(international project ``Functional error estimates for PDEs on unbounded
domains''), 
\href{https://www.fwf.ac.at/en/research-radar/10.55776/PAT3446525}{10.55776/PAT3446525}
(standalone project ``Adaptive Uzawa-type FEM for nonlinear PDEs''), 
and
\href{https://www.fwf.ac.at/en/research-radar/10.55776/PAT3699424}{10.55776/PAT3699424}
(standalone project ``Optimal robust solvers for reliable and efficient AFEMs'')
and by ANID FONDECYT Regular 1250070 project.}

\newcommand{\AS}{\textnormal{AS}}
\newcommand{\SMG}{\textnormal{sMG}}
\newcommand{\Calg}{\const{C}{alg}}
\newcommand{\lalg}{\lambda_\textup{alg}}

\newcommand{\Cbnd}{\const{C}{bnd}}
\renewcommand{\mod}[2]{\textnormal{mod}(#1,#2)}

\begin{document}
\maketitle

\begin{abstract}
We analyze optimal complexity of adaptive finite element methods (AFEMs) for general second-order linear elliptic partial differential equations (PDEs) in the Lax--Milgram setting.
To this end, we formulate an adaptive algorithm which steers the local mesh-refinement as well as the termination of a generalized minimal residual solver (GMRES) with optimal preconditioner to solve the arising non-symmetric finite element systems.
Algorithmic interplay of mesh-refinement and iterative solver is shown to be optimal: A natural and fully computable quasi-error monitoring discretization error and algebraic solver error guarantees unconditional convergence for any choice of adaptivity parameters, i.e., the algorithm cannot fail to converge.
This is ensured algorithmically via a novel adaptive feedback-control for the solver-termination parameter that monitors and ensures full R-linear convergence.
Finally, the quasi-error even decays with optimal rates with respect to the overall computational complexity if the adaptivity parameters are chosen sufficiently small. 
\end{abstract}

\section{Introduction}\label{section:introduction}

The mathematical understanding of adaptive finite element methods (AFEMs) has matured over the last decades.
The seminal works~\cite{doerfler1996, mns2000, bdd2004, stevenson2007, ckns2008} proved optimal convergence rates of AFEM for symmetric second-order linear elliptic partial differential equations (PDEs).
Therein, developments on optimal rates were mainly initiated by corresponding results for adaptive wavelet FEM~\cite{cdd2001, cdd2003}.
This was later extended to general non-symmetric second-order linear elliptic PDEs~\cite{mn2005, cn2012, ffp2014} and put into the abstract framework of the \emph{axioms of adaptivity}~\cite{cfpp2014}, which also covered available AFEM results for nonlinear PDEs like, e.g.,~\cite{gmz2012,bdk2012}.

While the first works~\cite{cdd2001, cdd2003} and~\cite{stevenson2007} also discussed optimal complexity of AFEM, i.e., the optimal decay of the error (resp.\ error estimator) with respect to the overall computational cost, main developments started by~\cite{ckns2008, cfpp2014} were focusing on the optimal decay of the error (resp.\ error estimator) with respect to the number of degrees of freedom.
Beyond 1D, where usual finite element (FE) matrices have a block-tridiagonal structure and can hence be solved at linear cost with respect to the number of degrees of freedom, optimal complexity of AFEM can only be achieved by including an iterative algebraic solver. 
Moreover, optimal complexity indeed requires a thorough interplay of the local mesh refinement with an appropriate iterative solver.
While this is most important for nonlinear PDEs, it should also not be neglected for
linear PDEs. 
Recent works~\cite{ghps2021,bmp2024,bfmps2025} laid the paths of a general theory for such an interplay, where, however, the main focus was on symmetric PDEs (resp.\ nonlinear PDEs, where the linearization leads to symmetric PDE formulations). 

A key ingredient in the analysis of~\cite{ghps2021} is that the iterative solver guarantees uniform contraction with respect to the variational structure of the PDE, i.e., the PDE-related energy norm.
In the case of symmetric linear elliptic PDEs, admissible solvers include preconditioned conjugate gradients (PCG) with optimal multilevel additive Schwarz preconditioners~\cite{cnx2012}, geometric multigrid methods tailored to the multilevel structure of the adaptive mesh hierarchy~\cite{wz2017, imps2022}, or generalized PCG with multigrid preconditioners~\cite{hmp26}. 
As far as non-symmetric second-order linear elliptic PDEs are concerned, the work~\cite{aisfem} formulates an AFEM strategy with optimal complexity for PDEs in the Lax--Milgram setting.
However, adapting ideas from optimal-complexity AFEM for nonlinear PDEs~\cite{hpw2021,hpsv2021}, the iterative solver in~\cite{aisfem} relies on a nested combination of the Zarantonello iteration (to symmetrize the nonsymmetric linear FE systems) with an optimal multigrid method (to solve the symmetrized linear FE system).
Then,~\cite{aisfem} proves optimal complexity provided that the user-chosen parameters are sufficiently small: The damping $\delta>0$ of the Zarantonello iteration, the marking parameter $0 < \theta \le 1$ as well as the parameters $\lambda_{\rm sym}, \lambda_{\rm alg} > 0$ for terminating the Zarantonello iteration and the multigrid solver. However, convergence fails to be unconditional, since only $\theta$ can be chosen arbitrarily. 
In explicit terms, the algorithm from~\cite{aisfem} might fail to converge if $\lambda_{\rm sym}$, $\lambda_{\rm alg}$, or $\delta$ are choosen too large.
Moreover, the standard strategy for solving the nonsymmetric linear FE systems arising for PDEs in the Lax--Milgram setting appears to be GMRES \cite{SaadSchultz86,EES83}. However, without preconditioning, GMRES convergence can be slow or even fail \cite{GPS96_gmres}. To improve convergence, a dedicated preconditioner for the symmetric part of the PDE can be used \cite{Spillane24}, as well as using problem-adapted inner products or weights \cite{PestanaWathen13,GuettelPestana14,SpillaneSzyld24}.

This work focuses on AFEM with GMRES and its unconditional convergence for general second-order linear elliptic PDEs. 
At the solver level, we adopt the left-preconditioning framework of \cite{SarkisSzyld07} and use an optimal symmetric additive Schwarz preconditioner which additionally serves as weight.
From \cite{hmp26}, the resulting weighted preconditioned GMRES method contracts the preconditioned algebraic residual robustly (in the local mesh-size $h$ and in the polynomial degree $p$) in the preconditioner-weighted discrete norm. 
While this discrete contraction is crucial, as is, it is incompatible with the current AFEM analysis~\cite{bfmps2025} as the latter requires contraction of the algebraic error in the PDE-induced energy norm. 
Moreover, to comply with the linear complexity of AFEM, GMRES is restarted after a maximum number of iterations and an a-posteriori error criterion determines when to stop the overall solver and when to proceed with mesh-refinement. 
Overall, the manuscript presents the following novelties: (i) an adaptive algorithm for non-symmetric problems without nested solvers, (ii) a new a-posteriori parameter control criterion included into the adaptive loop,  (iii) a computable quasi-error monitoring the accuracy of the (different sources of) error, (iv) the proof of only finitely many parameter adaptation steps. 
These contributions allow us to prove unconditional full R-linear convergence of the quasi-error independently of the adaptivity parameters provided by the user and optimal convergence rates with respect to total computational cost for sufficiently small adaptivity parameters.

\subsection*{Outline}
The manuscript is organized as follows: 
Section~\ref{section:setting} presents the model problem and its FE discretization.
Section~\ref{section:preconditioned_gmres} recaps the preconditioned GMRES solver in Algorithm~\ref{alg:optimally_preconditioned_gmres} and optimal preconditioners for the principal part from~\cite{hmp26}, and collects potentially well-known properties like equivalence of the discrete GMRES residuals to the solver errors in the functional setting (Lemma~\ref{lem:equivalence_residual_norm}) as well as uniform contraction of the discrete residuals (Proposition~\ref{lem:contraction_gmres}).
Section~\ref{section:afem_algorithm} formulates the new AFEM algorithm with GMRES solver (Algorithm~\ref{algorithm:unconditional_afem_gmres}) with the corresponding quasi-error and a-posteriori error control in Lemma~\ref{lemma:aposteriori}.
Unconditional convergence of Algorithm~\ref{algorithm:unconditional_afem_gmres} is stated and proved in Section~\ref{section:unconditional_full_R_linear_convergence}; see Theorem~\ref{theorem:unconditional_full_R_linear_convergence}. 
Optimal complexity is formulated in Section~\ref{section:adaptive_algorithm}; see Theorem~\ref{th:optimal_complexity}. 
Finally, numerical experiments in Section~\ref{section:numerical_experiments} underline the developed theory.

\color{black}


\section{Setting}\label{section:setting}

\subsection{Model problem}\label{section:model_problem}
On a polyhedral Lipschitz domain \(\Omega \subset \mathbb{R}^d\) 
with \(d \ge 1\)
and given \(f \in L^2(\Omega)\) and 
\(\boldsymbol{f} \in [L^2(\Omega)]^d\),
we consider the diffusion-convection-reaction equation
\begin{equation}\label{eq:model_problem}
	- \operatorname{div} (\boldsymbol{K} \nabla u^\star) 
	+ \boldsymbol{b} \cdot \nabla u^\star + c u^\star 
	= f - \operatorname{div} \boldsymbol{f}
	\quad \text{in } \Omega
	\quad \text{with } u^\star = 0 \quad \text{on } \partial \Omega.
\end{equation}
With the Hilbert space \(\XX \coloneqq H_0^1(\Omega)\) and \(a(v,w) \coloneqq 
\langle \boldsymbol{K} \, \nabla v, \nabla w \rangle_{L^2(\Omega)}\) 
denoting the principal part of the bilinear form 
\(b \colon \XX \times \XX \to \R\) defined by
\begin{equation*}
	b(v, w) \coloneqq a(v, w) 
	+ \langle \boldsymbol{b} \cdot \nabla v, w \rangle_{L^2(\Omega)} 
	+ \langle c \, v, w \rangle_{L^2(\Omega)}
	\quad \text{for all } v, w \in \XX,
\end{equation*}
the weak formulation of \eqref{eq:model_problem} reads as follows: 
Find \(u^\star \in \XX\) such that
\begin{equation}\label{eq:weak_formulation}
	b(u^\star, v) = \langle f, v \rangle_{L^2(\Omega)} 
	+ \langle \boldsymbol{f}, \nabla v \rangle_{L^2(\Omega)} \eqqcolon F(v)
	\quad \text{for all } v \in \XX.
\end{equation}
To ensure well-posedness of this problem, we assume that $\vec{b} \in [L^{\infty}(\Omega)]^d$ and $c \in L^{\infty}(\Omega)$, that the 
matrix-valued function 
$\boldsymbol{K} \in \bigl[L^\infty(\Omega)\bigr]^{d \times d}$ is symmetric with 
smallest eigenvalue uniformly bounded from below, and that 
$\operatorname{div} \boldsymbol{b}\in L^\infty(\Omega)$ with 
\begin{equation*}
- \operatorname{div}\bigl(\boldsymbol{b}\bigr) + 2 \, c \geq 0 
\quad\text{a.e. in } \Omega.
\end{equation*}
Under these assumptions, the bilinear form \(b(\cdot, \cdot)\) 
is coercive and bounded on \(\XX\) with respect to the energy norm 
\(\enorm{v} \coloneqq a(v, v)^{1/2}\),
i.e., there exist $0 < C_{\textup{ell}} \leq  C_{\textup{bnd}}$ such that
\begin{equation}\label{eq:coercivity_continuity}
	\abs{b(v, w)} \le C_{\textup{bnd}} \, \enorm{v} \, \enorm{w}
	\quad \text{and} \quad
	b(v, v) \ge C_{\textup{ell}} \, \enorm{v}^2
	\quad \text{for all } v, w \in \XX.
\end{equation}
The Lax--Milgram lemma yields existence and uniqueness of the 
solution \(u^\star\) to~\eqref{eq:weak_formulation}.
For the well-posedness of the standard residual-based error estimator (see~\eqref{eq:definition_eta} below), we will require additional regularity $\vec{f}|_T \in [H^1(T)]^d$ and $\vec{K}|_T \in [W^{1,\infty}(T)]^{d \times d}$ for all $T \in \TT_0$ with $\TT_0$ being the initial triangulation of $\Omega$ introduced in the following section.

\subsection{Finite element method}\label{section:adaptive_FEM}
Let \(\TT_0\) be a conforming initial triangulation of \(\Omega\) consisting of 
compact simplices. The adaptive mesh refinement employs newest vertex 
bisection (NVB),
where we refer to \cite{stevenson2008} for NVB with admissible initial 
triangulation and dimension \(d \ge 2\), to
\cite{Karkulik2013a} for NVB with general \(\mathcal{T}_0\) in the case \(d=2\), 
to the recent work \cite{dgs2023} for NVB with general $\mathcal{T}_0$ in any dimension
\(d \ge 2\), and to \cite{AFFKP13} for NVB in the case \(d=1\). 
For any triangulation \(\TT_H\) and any subset \(\mathcal{M}_H \subseteq \TT_H\), 
we define \(\TT_h \coloneqq \mathtt{refine}( \TT_H, \mathcal{M}_H )\) as the 
coarsest refinement of \(\TT_H\) by newest vertex bisection such that at least 
all elements in \(\mathcal{M}_H\) are refined, i.e., $\MM_H \subseteq \TT_H \backslash \TT_h$. 
The notation \(\TT_h \in \T(\TT_H)\) abbreviates that \(\TT_h\) can be obtained from 
\(\TT_H\) by a finite number of newest vertex bisection steps. To each 
triangulation \(\TT_H\) we associate the finite element space 
\begin{equation}\label{eq:fem_spaces}
	\XX_H \coloneqq 
\mathbb{S}^p_0(\TT_H) \coloneqq \mathbb{S}^p(\TT_H) \cap \XX
\end{equation}
consisting of 
globally continuous $\TT_H$-piecewiese polynomials of degree at most 
\(p \in \N\). Note that 
\(\TT_h \in \T(\TT_H)\) implies nestedness of the associated finite element spaces \(\XX_H \subseteq \XX_h\). 

The finite element discretization 
for \eqref{eq:weak_formulation} reads as follows: Find \(u_H^\star \in \XX_H\) 
such that
\begin{equation}\label{eq:discrete_formulation}
	b(u_H^\star, v_H) = F(v_H)
	\quad \text{for all } v_H \in \XX_H.
\end{equation}
The Lax--Milgram lemma ensures the existence and uniqueness of the solution 
\(u_H^\star\) to \eqref{eq:discrete_formulation} and the solution is the 
quasi-best approximation in \(\XX_H\) to the solution \(u^\star\) of 
\eqref{eq:weak_formulation} in the sense that there exists a constant 
\(1 \le C_{\textup{Céa}} \le C_{\textup{bnd}} / C_{\textup{ell}}\) such that
\begin{equation}\label{eq:cea}
	\enorm{u^\star - u_H^\star} \le C_{\textup{Céa}}
	\min_{v_H \in \XX_H} \enorm{u^\star - v_H},
\end{equation}
where we note that $\Ccea \rightarrow 1$ as $\enorm{u^{\star}-u_H^\star} \rightarrow 0$; see~\cite{bhp2017}.

\subsection{A-posteriori error estimation}\label{section:a_posteriori}
Based on the additional regularity of the data, we consider the residual error estimator~\(\eta_H(\cdot)\)
defined, for \(T \in \mathcal{T}_H\) and \(v_H \in \mathcal{X}_H\), by
\begin{subequations}\label{eq:definition_eta}
	\begin{equation}\label{eq:definition_eta:a}
		\begin{aligned}
			\eta_H(T, v_H)^2
			& \coloneqq
			|T|^{2/d} \, \norm{\operatorname{div}(\boldsymbol{K} 
			\nabla v_H - \boldsymbol{f}) - \boldsymbol{b} 
			\cdot \nabla v_H - c \, v_H + f
			}_{L^2(T)}^2
			\\& \qquad
			+
			|T|^{1/d} \, \norm{\lbrack\!\lbrack (\boldsymbol{K}
			\nabla v_H - \boldsymbol{f}) \cdot n \rbrack\!\rbrack
			}_{L^2(\partial T \cap \Omega)}^2,
		\end{aligned}
	\end{equation}
	where \(\lbrack\!\lbrack \cdot \rbrack\!\rbrack\) denotes the 
	jump over the $(d-1)$-dimensional facets of $T$. 
	Clearly, the well-posedness of~\eqref{eq:definition_eta:a} requires the additional regularity of the data $\vec{K}$ and $\vec{f}$ stated above.
	For any subset $\mathcal{U}_H \subseteq \mathcal{T}_H$ and all 
	$v_H \in \mathcal{X}_H$, we define the global error estimator by
	\begin{equation}
		\eta_H(v_H)
		\coloneqq
		\eta_H(\mathcal{T}_H, v_H)
		\quad \text{with} \quad
		\eta_H(\mathcal{U}_H, v_H)
		\coloneqq \Big( \sum_{T \in \mathcal{U}_H} \eta_H(T, v_H)^2
		\Big)^{1/2}.
	\end{equation}
\end{subequations}
It is well-known that the error estimator satisfies the
axioms of adaptivity; see~\cite{cfpp2014}.
\begin{proposition}[axioms of adaptivity]\label{prop:axioms}
	There exist constants $C_{\textup{stab}}, C_{\textup{rel}}, 
	C_{\textup{drel}}, C_{\textup{mon}} > 0$ and $0 <
		q_{\textup{red}} < 1$ such that the following properties are 
		satisfied for any triangulation $\mathcal{T}_H \in \T(\TT_0)$, any refinement $\mathcal{T}_h \in
		\mathbb{T}(\mathcal{T}_H)$, and arbitrary 
		\(v_H \in \mathcal{X}_H$, $v_h \in \mathcal{X}_h\):
	\begin{enumerate}[leftmargin=4em]
		\renewcommand{\theenumi}{A\arabic{enumi}}
		\bf
		\item[(A1)]\refstepcounter{enumi}\label{axiom:stability} 
		\textit{stability:}
		\quad \rm
			$|\eta_h(\mathcal{T}_h \cap \mathcal{T}_H, v_h) 
			- \eta_H(\mathcal{T}_h \cap \mathcal{T}_H, v_H)| 
			\le C_{\textup{stab}} \, \enorm{v_h - v_H}$;
			\bf
		\item[(A2)]\refstepcounter{enumi}\label{axiom:reduction} 
		\textit{reduction:}
			\quad \rm
		$\eta_h(\mathcal{T}_h \backslash \mathcal{T}_H, v_H) 
		\le q_{\textup{red}} \, \eta_H(\mathcal{T}_H \backslash 
		\mathcal{T}_h, v_H)$;
			\bf
		\item[(A3)]\refstepcounter{enumi}\label{axiom:reliability}
			\textit{reliability:} \quad \rm
			$\enorm{u^\star - u_H^\star} \le C_{\textup{rel}} \,
			\eta_H(u_H^\star)$;

			\renewcommand{\theenumi}{A3$^{+}$}
			\bf
		\item[(A3$^+$)]\refstepcounter{enumi}
			\label{axiom:discrete_reliability}
			\it
			\textbf{discrete reliability:}
				\quad
				\rm
		$
			\enorm{u_h^\star - u_H^\star}
			\le
			C_{\textup{drel}} \, 
			\eta_H(\mathcal{T}_H \backslash \mathcal{T}_h, u_H^\star);
		$

		\renewcommand{\theenumi}{QM}
		\bf
		\item[(QM)]\refstepcounter{enumi}\label{eq:quasi-monotonicity}
			\textit{quasi-monotonicity:} \quad \rm
			$\eta_h(u_h^\star) \le C_{\textup{mon}} \,
			\eta_H(u_H^\star)$.

	\end{enumerate}
	The constant $C_{\textup{rel}}$ depends only on uniform shape 
	regularity, the dimension $d$, the ellipticity constant $\Cell$, and $\norm{\vec{K}}_{\infty}+\norm{\vec{b}}_{\infty}+\norm{c}_{\infty}$, whereas $C_{\textup{stab}}$ and
	\(C_{\textup{drel}}\) additionally depend on the polynomial degree $p$. 
	Moreover, there hold the bounds $q_{\textup{red}} \leq 2^{-1/(2d)}$ and $C_{\textup{mon}}
		\le \min\{ 1 +
			C_{\textup{stab}}(1+C_{\textup{Céa}}) \, 
			C_{\textup{rel}} \, , \, 1 
			+ C_{\textup{stab}} \, C_{\textup{drel}}\}$.
		\qed
\end{proposition}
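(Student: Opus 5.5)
We treat the five properties one at a time, following the standard arguments collected in~\cite{cfpp2014}. Throughout we use that NVB guarantees uniform shape regularity, that $\TT_h \in \T(\TT_H)$ implies $|T'| \le |T|/2$ for every $T' \in \TT_h\backslash\TT_H$ with $T' \subseteq T \in \TT_H$, and that $\enorm{\cdot}$ is equivalent to the $H^1_0$-seminorm with constants depending only on $\vec K$.

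For \eqref{axiom:stability}, we apply the inverse triangle inequality elementwise on $\TT_h\cap\TT_H$. Since these elements coincide in both meshes, the difference of the estimators is bounded by $\eta_h(\TT_h\cap\TT_H, v_h - v_H)$, where $\vec f$ and $f$ are dropped because the map is affine. For $w_h = v_h - v_H \in \XX_h$ we then bound the volume term $|T|^{1/d}\|\operatorname{div}(\vec K\nabla w_h) - \vec b\cdot\nabla w_h - c\,w_h\|_{L^2(T)}$ using:
\begin{itemize}
\item inverse estimates, which is where $\vec K|_T\in W^{1,\infty}$ and the degree $p$ enter;
\item $|T|^{1/d}\simeq h_T$;
\item Poincaré/Friedrichs to control $\|w_h\|_{L^2(\Omega)}$ by $\enorm{w_h}$.
\end{itemize}
We bound the jump term by a discrete trace inequality. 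Summing over $\TT_h\cap\TT_H$ and using finite patch overlap yields $C_{\rm stab}\enorm{v_h-v_H}$.

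For \eqref{axiom:reduction}, $v_H$ is smooth on each $T\in\TT_H$. Hence on children $T'\subset T$ the jump of $(\vec K\nabla v_H - \vec f)\cdot n$ across new interior facets vanishes, because $\vec K|_T\in W^{1,\infty}$ and $\vec f|_T\in H^1$ give traces from both sides that agree. The remaining volume and jump contributions split over children, with weights satisfying $|T'|^{1/d}\le 2^{-1/d}|T|^{1/d}$. This gives $\eta_h(T',v_H)^2$ summed over children $\le 2^{-1/d}\eta_H(T,v_H)^2$, i.e.\ $q_{\rm red}\le 2^{-1/(2d)}$.

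For \eqref{axiom:reliability} and \eqref{axiom:discrete_reliability}, we use coercivity~\eqref{eq:coercivity_continuity} and Galerkin orthogonality. For instance,
\[
C_{\rm ell}\enorm{u_h^\star-u_H^\star}^2 \le b(u_h^\star-u_H^\star, e_h) = F(e_h - J_H e_h) - b(u_H^\star, e_h - J_H e_h),
\]
where $e_h = u_h^\star - u_H^\star$ and $J_H$ is a Scott--Zhang quasi-interpolant. Piecewise integration by parts produces the residual and jump terms; the $\vec f$ term is treated through its $H^1$ regularity. The approximation and stability properties of $J_H$ then give the bound. The main obstacle is the discrete version, which needs the modified Scott--Zhang operator of~\cite{ckns2008,cfpp2014} with $J_H e_h = e_h$ on all elements not in the refined patch, so that only $\TT_H\backslash\TT_h$ (up to a layer absorbed into the constant) appears. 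For continuous reliability we use $J_H$ on $\XX$ in the same way; its constant involves no inverse estimates, hence no dependence on $p$.

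Finally, \eqref{eq:quasi-monotonicity} follows abstractly. We write $\eta_h(u_h^\star)\le \eta_h(u_H^\star) + C_{\rm stab}\enorm{u_h^\star-u_H^\star}$, which comes from \eqref{axiom:stability} on $\TT_h\cap\TT_H$ together with reduction and stability on the refined part, and we note $\eta_h(u_H^\star)\le\eta_H(u_H^\star)$ by \eqref{axiom:reduction}. We then bound $\enorm{u_h^\star-u_H^\star}$ in one of two ways:
\begin{itemize}
\item by \eqref{axiom:discrete_reliability}, giving $1+C_{\rm stab}C_{\rm drel}$;
\item by $\enorm{u^\star-u_h^\star}+\enorm{u^\star-u_H^\star}\le (1+C_{\rm Céa})\enorm{u^\star-u_H^\star}$ via~\eqref{eq:cea} and nestedness, followed by \eqref{axiom:reliability}, giving $1+C_{\rm stab}(1+C_{\rm Céa})C_{\rm rel}$.
\end{itemize}
Taking the minimum of the two bounds gives the stated estimate for $C_{\rm mon}$.
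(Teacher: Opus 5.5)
The paper does not prove this proposition itself; it only refers to \cite{cfpp2014}. Your sketch follows those standard arguments, and your treatment of (A1), (A2) and the two bounds on $C_{\rm mon}$ is fine. For the first inequality in (QM), simply apply (A1) to the trivial refinement $\TT_H:=\TT_h$, since $u_H^\star\in\XX_h$; reduction is not needed there.

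Two justifications, however, do not work as written. The first is the claimed $p$-independence of $C_{\rm rel}$. A degree-$p$ Scott--Zhang operator has stability and approximation constants that depend on $p$ (through the dual basis and the number of Lagrange nodes per element), whether or not inverse estimates are used. To fix this, use that Galerkin orthogonality $b(u^\star-u_H^\star,v_H)=0$ holds for all $v_H\in\XX_H\supseteq\mathbb{S}^1_0(\TT_H)$. Take a lowest-order Cl\'ement/Scott--Zhang operator $J_H\colon\XX\to\mathbb{S}^1_0(\TT_H)$. Then only its first-order approximation estimates and the continuous trace inequality $\|w\|_{L^2(\partial T)}^2\lesssim h_T^{-1}\|w\|_{L^2(T)}^2+h_T\|\nabla w\|_{L^2(T)}^2$ enter, with constants depending only on $d$ and shape regularity.

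The second concerns (A3$^+$). There, nothing can be ``absorbed into the constant'' at the level of the estimator. Suppose $J_He_h=e_h$ held only away from a layer of unrefined neighbours of the refined elements. Then the residual and jump terms on that layer would survive, and they are not bounded by $\eta_H(\TT_H\setminus\TT_h,u_H^\star)$. You would only obtain the weaker patch version $\enorm{u_h^\star-u_H^\star}\lesssim\eta_H(\mathcal{R},u_H^\star)$, where $\mathcal{R}$ is the refined elements plus their neighbours.

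What the modified Scott--Zhang operator of \cite{ckns2008} actually provides is $J_He_h=e_h$ on \emph{every} unrefined $T\in\TT_H\cap\TT_h$. On such $T$ we have $e_h|_T\in\mathbb{P}^p(T)$. So for each Lagrange node lying in some unrefined element, choose an averaging simplex containing it inside an unrefined element (a boundary facet if the node lies on $\partial\Omega$); this reproduces the nodal value exactly. Then $e_h-J_He_h$ vanishes on all unrefined elements and, by continuity, on their boundaries. After elementwise integration by parts, only volume residuals on $\TT_H\setminus\TT_h$ and jumps across facets of refined elements remain.

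The neighbour layer enters only through the local bounds $\|h_T^{-1}(e_h-J_He_h)\|_{L^2(T)}+\|h_T^{-1/2}(e_h-J_He_h)\|_{L^2(\partial T)}\lesssim\|\nabla e_h\|_{L^2(\omega_T)}$ on the norm side, where finite overlap is harmless. This exact reproduction of degree-$p$ polynomials is also the reason $C_{\rm drel}$, unlike $C_{\rm rel}$, depends on $p$.
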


In addition to the estimator properties in Proposition~\ref{prop:axioms}, 
we recall the following quasi-orthogonality
result from~\cite{feischl2022} as one cornerstone of the analysis 
in this paper.

\begin{proposition}[validity of quasi-orthogonality]\label{prop:orthogonality}
	There exist
	\(C_{\textup{orth}} > 0\) and \(0 < \delta \le 1\) such
	that
	the following holds: For any sequence 
	$\mathcal{X}_\ell \subseteq \mathcal{X}_{\ell+1} \subset
		H^1_0(\Omega)$ of nested finite-dimensional subspaces, 
		the corresponding Galerkin
	solutions $u_\ell^\star \in \mathcal{X}_\ell$ to 
	\eqref{eq:discrete_formulation} satisfy
	\begin{enumerate}[leftmargin=2.7em]
		\renewcommand{\theenumi}{A\arabic{enumi}}
		\setcounter{enumi}{3}
		\bf
		\item[(A4)]\refstepcounter{enumi}\label{axiom:orthogonality} 
		\textit{quasi-orthogonality}: 
		\[
			\sum_{\ell' = \ell}^{\ell + N} 
			\enorm{u^\star_{\ell'\!+\!1} \!-
			u^\star_{\ell'}}^2
			\!\le\!
			\Corth (N+1)^{1-\delta} 
			\enorm{u^\star \!- u^\star_\ell}^2
			\quad \textnormal{for all }  \ell, N \in \mathbb{N}_0.
			\]
	\end{enumerate}
	Here, $C_{\textup{orth}}$ and \(\delta\) depend only on the
	dimension $d$, the elliptic bilinear form $b(\cdot,\cdot)$, and the 
	chosen norm
	$\enorm{\cdot}$, but are 
	independent of the spaces $\mathcal{X}_\ell$. \qed
\end{proposition}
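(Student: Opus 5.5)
The statement is the quasi-orthogonality (A4) of \cite{feischl2022}, and I would reproduce the argument from that reference. It rests on two ingredients: a Galerkin orthogonality-type identity for a suitably chosen auxiliary norm, and a finite-dimensional linear-algebra estimate for block-triangular operators.

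\emph{Step 1: reduction to a matrix problem.} Fix $\ell$ and $N$. Since the spaces are nested, define the $\enorm{\cdot}$-orthogonal increments $\mathcal{W}_{k}\coloneqq \mathcal{X}_{k+1}\ominus\mathcal{X}_{k}$ for $k=\ell,\dots,\ell+N-1$, together with a final component complementing $\mathcal{X}_{\ell+N}$ in $H^1_0(\Omega)$. With respect to this orthogonal decomposition, the operator $\mathcal{B}\colon \XX\to\XX$ induced by $b(\cdot,\cdot)$ via the Riesz map of $a(\cdot,\cdot)$ becomes an infinite block matrix. Because Galerkin solutions are characterized by $b(u^\star-u^\star_k,v_k)=0$ for all $v_k\in\mathcal{X}_k$, the differences $u^\star_{k+1}-u^\star_k$ arise from a block-LU factorization of $\mathcal{B}$. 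More precisely, $u^\star_k$ is obtained by inverting the leading principal block on $\mathcal{X}_k$. Writing $\mathcal{B}=LU$ with block lower-triangular $L$ and block upper-triangular $U$, one sees that the vector $(u^\star_{k+1}-u^\star_k)_k$ equals $U^{-1}$ applied to the diagonal extraction of $L^{-1}\mathcal{B}(u^\star-u^\star_\ell)$, or a similar expression.

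\emph{Step 2: uniform bounds.} Coercivity and boundedness \eqref{eq:coercivity_continuity} guarantee that all principal compressions of $\mathcal{B}$ are uniformly invertible. Hence the block factors $L$ and $U$ exist, and each individual block is bounded by $C_{\textup{bnd}}/C_{\textup{ell}}$. The global factors $L$ and $U$ need not be uniformly bounded, however; this is exactly why one obtains the exponent $1-\delta$ rather than $0$.

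\emph{Step 3 (main obstacle): the sublinear growth estimate.} I expect the key difficulty to be showing that the triangular truncation of an operator whose symmetric part is coercive has norm $\lesssim (N+1)^{(1-\delta)/2}$ on $N+1$ blocks, for some $\delta>0$. A naive bound gives $N+1$, or $\log N$ via the triangular truncation estimate, but the latter constant is not robust in the right form. Following \cite{feischl2022}, I would write $\mathcal{B}=\mathcal{A}+\mathcal{S}$, where $\mathcal{A}$ is the symmetric part and $\mathcal{S}$ the skew part. I would then use that the symmetric part yields exact orthogonality, i.e.\ the Pythagoras identity, so that the sum telescopes with constant $1$. The skew contribution I would control by a contradiction/compactness argument on the finite matrices: assuming the sum grows like $N$, one extracts almost-orthogonal structure contradicting ellipticity. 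Interpolating between the trivial linear bound and this improved decay gives the power $1-\delta$ with $\delta$ depending only on $C_{\textup{ell}}$ and $C_{\textup{bnd}}$, and hence only on $b(\cdot,\cdot)$ and $\enorm{\cdot}$.

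\emph{Step 4: conclusion.} Combining Steps 1--3 bounds the sum $\sum_{k=\ell}^{\ell+N}\enorm{u^\star_{k+1}-u^\star_k}^2$ by $C(N+1)^{1-\delta}\enorm{u^\star-u^\star_\ell}^2$. The constant $C$ is independent of the spaces, since only the block structure and the constants of \eqref{eq:coercivity_continuity} entered the argument.
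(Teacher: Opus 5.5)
The paper gives no proof of this proposition; it quotes (A4) from \cite{feischl2022} without argument. Appealing to that reference is therefore exactly what the paper does, and your Step~1 is in the spirit of the LU connection used there. Your reconstruction of the core, Step~3, however, has a genuine gap and misdescribes the reference. \cite{feischl2022} uses only uniform inf-sup stability (it also covers indefinite problems such as Stokes, which have no coercive symmetric part) and gets the sublinear growth from stability bounds for the block LU-factorization; no symmetric/skew splitting enters. Such a splitting also cannot help. The $u^\star_k$ are Galerkin solutions for $b$, not for its symmetric part, so with $e_k \coloneqq u^\star-u^\star_k$ Galerkin orthogonality only yields
\begin{equation*}
\sum_{k=\ell}^{\ell+N}\enorm{u^\star_{k+1}-u^\star_k}^2
= \enorm{e_\ell}^2-\enorm{e_{\ell+N+1}}^2
+ 2\sum_{k=\ell}^{\ell+N}(b-a)\bigl(e_{k+1},\,u^\star_{k+1}-u^\star_k\bigr).
\end{equation*}
The cross sum is trivially of size $\mathcal{O}(N+1)\,\enorm{e_\ell}^2$, and bounding it better is the entire difficulty. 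A compactness/contradiction argument cannot do this with constants independent of the spaces. The increments $\mathcal{X}_{k+1}\ominus\mathcal{X}_k$ have arbitrary dimension, so there is no compact family to extract from. Along a fixed sequence, such arguments (as in earlier analyses of compactly perturbed problems, e.g.\ \cite{ffp2014,bhp2017}) only give sequence-dependent constants. Your triangular-truncation remark also points the wrong way. Block triangular truncation with respect to $N+1$ blocks has norm $\lesssim\log(N+1)$ independently of the block sizes, so if the relevant object were a truncation of the uniformly bounded $\mathcal{B}$ you would already be done. The actual objects are the LU factors, which involve inverses of Schur complements and whose norms need not be uniformly bounded.

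Moreover, the power $(N+1)^{1-\delta}$ does not come from ``interpolation'' but from a self-improvement you do not identify. Let $G_k$ be the Galerkin projection onto $\mathcal{X}_k$. Then $\enorm{G_kv}\le (C_{\textup{bnd}}/C_{\textup{ell}})\,\enorm{v}$, $G_jG_k=G_{\min\{j,k\}}$, and $u^\star_{k+1}-u^\star_k=(G_{k+1}-G_k)e_\ell$ for $k\ge\ell$. Let $S(N)$ be the worst-case constant, i.e., the supremum of the ratio in (A4) with $N$ summands over all nested sequences, right-hand sides, and $\ell$. It is submultiplicative, $S(N_0m)\le S(N_0)\,S(m)$. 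Indeed, coarsening a nested sequence is admissible, and $G_{k+1}-G_k=(G_{k+1}-G_k)(G_{\ell+(j+1)m}-G_{\ell+jm})$ for $\ell+jm\le k<\ell+(j+1)m$. Therefore a single gain $S(N_0)<N_0$, valid uniformly in the spaces, already gives $S(N)\lesssim N^{1-\delta}$. Proving this uniform quantitative gain is the missing idea, and neither compactness nor interpolation provides it. \cite{feischl2022} obtains it from its LU bounds. Alternatively, $(u^\star_{k+1}-u^\star_k)_k$ is a basic sequence in the Hilbert space $(\XX,\enorm{\cdot})$ with basis constant at most $C_{\textup{bnd}}/C_{\textup{ell}}$. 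The Gurarii--James lower $q$-estimate for basic sequences in uniformly convex spaces then gives $\sum_k\enorm{u^\star_{k+1}-u^\star_k}^q\lesssim\enorm{e_\ell}^q$ with $q<\infty$ depending only on that constant. H\"older's inequality then yields (A4) with $\delta=2/q$.
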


It remains to treat the discrete 
problem \eqref{eq:discrete_formulation}. 
Note that, in the case of a symmetric setting \(b(\cdot, \cdot) = 
a(\cdot, \cdot)\),
optimal solvers are well-established; see, e.g.,
\cite{cnx2012} for an optimally preconditioned conjugate gradient method (CG),
\cite{wz2017, imps2022} for optimal local
geometric multigrid methods, and the recent work~\cite{hmp26} for multigrid preconditioners for (generalized) CG. 
In the next section, we revisit the literature on 
preconditioned GMRES methods and propose
an optimally preconditioned GMRES method for the numerical solution of the 
non-symmetric discrete problem~\eqref{eq:discrete_formulation}.


\section{Preconditioned GMRES method (PGMRES)} \label{section:preconditioned_gmres}
\subsection{Abstract setting}
Fix \(\ell \in \N\) and let us consider the discrete 
problem~\eqref{eq:discrete_formulation}: let \(\varphi_{\ell,1}, \ldots, \varphi_{\ell,N_{\ell}}\) 
be the basis functions of \(\XX_\ell\) with 
\(N_{\ell}\coloneqq \dim \XX_\ell\). We define the Galerkin matrix 
\(\vec{B}_\ell \in \R^{N_{\ell}\times N_{\ell}}\) associated with the 
bilinear form \(b(\cdot, \cdot)\) by
$(\vec{B}_\ell)_{jk} \coloneqq b(\varphi_{\ell,k} , \varphi_{\ell,j})$ for all 
$j, k = 1, \ldots, N_{\ell}$. Similarly, discretizing the right-hand side leads 
to $\vec{d}_\ell \in\R^{N_{\ell}}$. The arising linear 
problem from the finite element discretization aims to find 
$\vec{x}^\star_\ell \in\R^{N_{\ell}}$ solving
\begin{equation} \label{eq:lin_system}
	\vec{B}_\ell \, \vec{x}^\star_\ell = \vec{d}_\ell.
\end{equation}
Let $\vec{P}_\ell \in \R^{N_{\ell}\times N_{\ell}}$ be an SPD
preconditioner of $\vec{B}_{\ell}$, i.e., $\vec{P}_{\ell} \approx \vec{B}_{\ell}^{-1}$ is symmetric 
and positive definite.
Note that
\((\cdot, \cdot )_{\vec{P}^{-1}_\ell} \coloneqq 
(\vec{P}^{-1}_\ell \cdot, \cdot)_2\) then defines a scalar product on 
\(\R^{N_{\ell}}\). 
We suppose that $\vec{P}_{\ell}$ is optimal in the following sense: first, the matrix-vector multiplication with $\vec{P}_{\ell}$ can be realised in linear complexity; second, the  discrete norm induced by the inverse is equivalent to the PDE-related energy norm $\enorm{\cdot}$ defined in Section~\ref{section:model_problem}, i.e.,  there exists a constant \(C_{\textup{pre}}\ge 1\) independent of the discretization parameters, i.e., the local mesh size $h$ and the polynomial degree $p$, such 
that 
\begin{equation} \label{eq:equivalence_preconditioner_energy_norm}
	C_{\textup{pre}}^{-1} \, \enorm{v_\ell} 
	\le 
	\norm{\vec{x}_\ell}_{\vec{P}^{-1}_\ell} 
	\coloneq (\vec{x}_\ell, \vec{x}_\ell )^{1/2}_{\vec{P}^{-1}_\ell}
	\le  C_{\textup{pre}} \, \enorm{v_\ell} \quad \text{for all }
v_\ell = \sum_{j=1}^{N_{\ell}} (\vec{x}_\ell)_{j} \, 
\varphi_{\ell,j} \in \XX_\ell.
\end{equation}
We now formulate the optimally preconditioned GMRES algorithm for~\eqref{eq:lin_system}. 
It is derived from~\cite[Algorithm 3.1]{SarkisSzyld07}, where all the scalar products are replaced by the preconditioner inner product $(\cdot, \cdot )_{\vec{P}^{-1}_\ell}$.
Moreover, a restart strategy is included to ensure that the computational cost of the algorithm is linear in the number of degrees of freedom.
To make the computations explicit at step $k$ of the algorithm, we compute and store the Arnoldi basis vectors
denoted by $\vec{v}^j$ as well as the
unpreconditioned Arnoldi basis vectors 
denoted by $\widetilde{\vec{v}}^j$ for the current run, i.e., these vectors are deleted after every restart.

\begin{algorithm}[optimally preconditioned GMRES method with restart]\label{alg:optimally_preconditioned_gmres} 

\textbf{Input:} Matrix $\vec{B}_{\ell}$, preconditioner matrix 
	$\vec{P}_{\ell}$, 
	right-hand side vector $\vec{d}_{\ell}$, initial guess $\vec{x}_{\ell}^0$, absolute tolerance function \(\Lambda: \R^{N_{\ell}}\to \R_{\ge 0}\), PGMRES restart length $k_{\max} \in \N$.
\begin{enumerate}[label = \textnormal{(\alph*)}]
	\item Compute $\vec{r}_{\ell}^0 \coloneqq \vec{d}_{\ell}-\vec{B}_{\ell}\vec{x}_{\ell}^0, \vec{s}_{\ell}^0\coloneqq
	\vec{P}_{\ell}\vec{r}_{\ell}^0$, 
	$\widetilde{\vec{v}}^1\coloneqq \vec{r}_{\ell}^0/\norm{\vec{s}_{\ell}^0
	}_{\vec{P}^{-1}_{\ell}} = \vec{r}_{\ell}^0/(\vec{s}_{\ell}^0, \vec{r}_{\ell}^0)_{2}^{1/2}$,
	$\vec{v}^1\coloneqq \vec{P}_{\ell}
	\widetilde{\vec{v}}^1$.
	\item For all \(k = 1, 2,3,\ldots\) repeat the following steps
	\textnormal{(i)--(vi)} \textbf{until}
	\begin{equation}\label{eq:single:termination}
			\norm{\vec{s}_{\ell}^k}_{\vec{P}^{-1}_{\ell}}\leq 
		\Lambda(\vec{x}_{\ell}^k) \quad \text{and then set} \quad \underline{k}[\ell] \coloneqq k.
		\end{equation}
	\begin{enumerate}[label=\textnormal{(\roman*)}]
		\item Compute $\widetilde{\vec{w}} \coloneqq
		\vec{B}_{\ell}\vec{v}^k$, $\vec{w} \coloneqq 
		\vec{P}_{\ell} \widetilde{\vec{w}}$ and set $K \coloneqq \mod{k-1}{k_{\max}}+1$ as well as the restart index $R \coloneqq \lfloor k / k_{\max}\rfloor$.
		\item \label{alg:gmres:ii} For all $j=1, \dots, K$ compute $\vec{H}_{jK}
			\coloneqq 
			\dual{ \vec{w}}{\vec{v}^{k+j-K}}_{\vec{P}^{-1}_{\ell}}
			=
			\dual{ \widetilde{\vec{w}} }{
				\vec{v}^{k+j-K} }_2$
		and
		\begin{equation*}
			\widetilde{\vec{w}} \coloneqq \widetilde{\vec{w}} 
			- \vec{H}_{jK} \widetilde{\vec{v}}^{k+j-K} \quad \text{and} \quad
			\vec{w} \coloneqq \vec{w} 
			- \vec{H}_{jK} \vec{v}^{k+j-K}.
		\end{equation*}
		\item Compute $\vec{H}_{K+1,K} \coloneqq
		\norm{\vec{w}}_{\vec{P}^{-1}_{\ell}} =
		\dual{\widetilde{\vec{w}} }{ \vec{w}}^{1/2}_2$.
		\item\label{alg:PGMRES:min} Compute
		$	\vec{y} \coloneqq \argmin_{\vec{z}\in\R^{K}}  \big\lVert
			\,\norm{\vec{s}_{\ell}^{R k_{\max}}}_{\vec{P}^{-1}_{\ell}} \vec{e}^1 
			- \vec{H}_{[1:K+1,1:K]}\vec{z} \big\rVert_2.$
		\item Compute $\vec{x}_{\ell}^k \coloneqq 
		\vec{x}_{\ell}^{R k_{\max}}+\sum_{j=1}^{K} 
		\vec{y}_j\vec{v}^{k+j-K}$, $\vec{r}_{\ell}^k 
		 \coloneqq 
		\vec{d}_{\ell} - \vec{B}_{\ell} \vec{x}_{\ell}^k$, 
		$\vec{s}_{\ell}^k \coloneqq \vec{P}_{\ell} 
		\vec{r}_{\ell}^k$,
		and $\norm{\vec{s}_{\ell}^k}_{\vec{P}^{-1}_{\ell}} = (\vec{s}_{\ell}^k, \vec{r}_{\ell}^k)_2^{1/2}$.
		\item If $\mod{k}{k_{\max}}=0$, then define $\widetilde{\vec{v}}^{k+1} \coloneqq \vec{r}_{\ell}^k / \norm{\vec{s}_{\ell}^k}_{\vec{P}^{-1}_\ell}$, $\vec{v}^{k+1} \coloneqq \vec{s}_{\ell}^k/ \norm{\vec{s}_{\ell}^k}_{\vec{P}^{-1}_\ell}$, and delete $\vec{H}, \vec{v}^j$, and $\widetilde{\vec{v}}^j$ for $j \leq k$; Else, compute $\widetilde{\vec{v}}^{k+1} 
		 \coloneqq \widetilde{\vec{w}}/\vec{H}_{k+1,k}$,
		$\vec{v}^{k+1} 
		\coloneqq \vec{w}/\vec{H}_{k+1,k}$.
\end{enumerate}
\end{enumerate}
\textbf{Output:} Approximations $\vec{x}_{\ell}^{k}$, norms of preconditioned residuals 
$\norm{\vec{s}_{\ell}^{k}}_{\vec{P}^{-1}_{\ell}}$, and index $\underline{k}[\ell]$.
\end{algorithm}

\begin{remark}[notation of Algorithm~\ref{alg:optimally_preconditioned_gmres}]
Define $K \coloneqq \mod{k-1}{k_{\max}}+1$ as 
\begin{equation*}
	k \mapsto \begin{cases}
			k_{\max} & \text{for } \mod{k}{k_{\max}} = 0, \\
		\mod{k}{k_{\max}} & \text{else},
	\end{cases}
\end{equation*}
and thus yields the number of GMRES steps since the last restart.
Moreover, the quantity $R \coloneq \lfloor k/k_{\max}\rfloor$ gives the number of restarts performed so far.
In step~\ref{alg:gmres:ii}, the orthogonalization loop is carried out over the last $K$ Arnoldi basis vectors $\vec{v}^k$. 
This is realized through the index $k+j-K$, where $j=1,\dots,K$.
Finally, $\vec{x}_{\ell}^{Rk_{\max}}$ denotes the initial guess of the current GMRES run, and $\vec{s}_{\ell}^{Rk_{\max}}$ the corresponding initial preconditioned residual.
\end{remark}

\begin{remark}[computational complexity of PGMRES with restart]\label{label:remark_cost_PGMRES}
For every $k$, Algorithm~\ref{alg:optimally_preconditioned_gmres} performs two applications of $\vec{B}_{\ell}$ as well as two applications of $\vec{P}_{\ell}$, which due to sparsity of $\vec{B}_{\ell}$ and optimality assumptions on the preconditioner $\vec{P}_{\ell}$ can be performed with linear cost $\OO(N_{\ell})$.
Since $K \leq k_{\max}$ the $j$-loop is bounded and the calculation of $\vec{y}$ in step~\ref{alg:PGMRES:min} is of constant cost with respect to $k$ and the degrees of freedom.
Therefore, one step of optimal preconditioned GMRES with restart is of cost $\OO(N_{\ell})$.
\end{remark}

The analysis of Algorithm~\ref{alg:optimally_preconditioned_gmres} relies on 
an equivalence of the discrete and level-induced
norm \(\norm{\vec{s}^k_\ell}_{\vec{P}^{-1}_\ell}\) of the $k$-th residual vector 
and the functional energy norm 
\(\enorm{u_{\ell}^\star - u_{\ell}^k}\) of the algebraic error, where $u_{\ell}^k \coloneqq \sum_{j=1}^{N_{\ell}} (\vec{x}^k_\ell)_j \varphi_{\ell,j}$ and $\vec{x}_{\ell}^k$ is the $k$-th iterate of Algorithm~\ref{alg:optimally_preconditioned_gmres}.
First, let us show a preliminary equivalence result on the preconditioned norm 
\(\norm{\cdot}_{\vec{P}^{-1}_\ell}\).

\begin{lemma}[optimality of preconditioner in discrete norm]
	\label{lem:equivalence_preconditioner_norm}
  For any \(\vec{x}_\ell \in \R^{N_{\ell}}\) and \(v_\ell \coloneqq \sum_{j=1}^{N_{\ell}} (\vec{x}_\ell)_j 
  \varphi_{\ell,j} \in \XX_\ell\), there hold the estimates
 
\begin{subequations}\label{eq:equivalence_preconditioner_norm}
	\begin{equation}\label{eq:lower}
		C_{\textup{pre}}^{-1} \, \norm{ \vec{x}_\ell }_{\vec{P}^{-1}_\ell}  \eqreff{eq:equivalence_preconditioner_energy_norm}\leq \enorm{v_\ell}\le C_{\textup{ell}}^{-1} \, C_{\textup{pre}} \norm{ \vec{P}_\ell \vec{B}_\ell \, 
		\vec{x}_\ell }_{\vec{P}^{-1}_\ell} 
	\end{equation}
	and
	\begin{equation}\label{eq:upper}
		C_{\textup{bnd}}^{-1} \, C_{\textup{pre}}^{-1}  \norm{ \vec{P}_\ell \vec{B}_\ell \, 
		\vec{x}_\ell }_{\vec{P}^{-1}_\ell}  \leq  \enorm{v_\ell} \eqreff{eq:equivalence_preconditioner_energy_norm}\leq C_{\textup{pre}}  \, \norm{ \vec{x}_\ell }_{\vec{P}^{-1}_\ell}.  
	\end{equation}
\end{subequations}
	In particular, all estimates are $h$- and $p$-robust.
\end{lemma}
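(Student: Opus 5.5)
The two outer inequalities in \eqref{eq:lower} and \eqref{eq:upper} are exactly \eqref{eq:equivalence_preconditioner_energy_norm}, so only the bounds involving $\vec{P}_\ell\vec{B}_\ell\vec{x}_\ell$ remain. The key identity is that for $\vec{y}_\ell\in\R^{N_\ell}$ with $w_\ell=\sum_j(\vec{y}_\ell)_j\varphi_{\ell,j}$,
\begin{equation*}
	(\vec{P}_\ell\vec{B}_\ell\vec{x}_\ell,\vec{y}_\ell)_{\vec{P}^{-1}_\ell}
	= (\vec{B}_\ell\vec{x}_\ell,\vec{y}_\ell)_2
	= b(v_\ell,w_\ell).
\end{equation*}
This uses the symmetry of $\vec{P}_\ell$ and the definition $(\vec{B}_\ell)_{jk}=b(\varphi_{\ell,k},\varphi_{\ell,j})$. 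Everything else follows by combining this identity with \eqref{eq:coercivity_continuity} and \eqref{eq:equivalence_preconditioner_energy_norm}.

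For the upper bound in \eqref{eq:lower}, I take $\vec{y}_\ell=\vec{x}_\ell$. Coercivity gives
\begin{equation*}
	\Cell\enorm{v_\ell}^2 \le b(v_\ell,v_\ell)
	= (\vec{P}_\ell\vec{B}_\ell\vec{x}_\ell,\vec{x}_\ell)_{\vec{P}^{-1}_\ell}
	\le \norm{\vec{P}_\ell\vec{B}_\ell\vec{x}_\ell}_{\vec{P}^{-1}_\ell}\norm{\vec{x}_\ell}_{\vec{P}^{-1}_\ell},
\end{equation*}
where the last step is Cauchy--Schwarz in the $\vec{P}^{-1}_\ell$-inner product. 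Since $\norm{\vec{x}_\ell}_{\vec{P}^{-1}_\ell}\le C_{\textup{pre}}\enorm{v_\ell}$, I divide by $\enorm{v_\ell}$ (the case $v_\ell=0$ being trivial) and obtain $\enorm{v_\ell}\le \Cell^{-1}C_{\textup{pre}}\norm{\vec{P}_\ell\vec{B}_\ell\vec{x}_\ell}_{\vec{P}^{-1}_\ell}$.

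For the lower bound in \eqref{eq:upper}, I use duality. Set $\vec{z}_\ell\coloneqq\vec{P}_\ell\vec{B}_\ell\vec{x}_\ell$ and $w_\ell\coloneqq\sum_j(\vec{z}_\ell)_j\varphi_{\ell,j}$, and take $\vec{y}_\ell=\vec{z}_\ell$ in the identity. Boundedness gives
\begin{equation*}
	\norm{\vec{z}_\ell}^2_{\vec{P}^{-1}_\ell}
	= b(v_\ell,w_\ell)
	\le \Cbnd\enorm{v_\ell}\enorm{w_\ell}
	\le \Cbnd C_{\textup{pre}}\enorm{v_\ell}\norm{\vec{z}_\ell}_{\vec{P}^{-1}_\ell}.
\end{equation*}
Dividing by $\norm{\vec{z}_\ell}_{\vec{P}^{-1}_\ell}$ yields $\Cbnd^{-1}C_{\textup{pre}}^{-1}\norm{\vec{P}_\ell\vec{B}_\ell\vec{x}_\ell}_{\vec{P}^{-1}_\ell}\le\enorm{v_\ell}$.

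Robustness with respect to $h$ and $p$ is immediate, since only $\Cell$, $\Cbnd$ and $C_{\textup{pre}}$ enter. There is no real obstacle here. The only care needed is in the key identity: one must match the index convention of $\vec{B}_\ell$ so that the inner product produces $b(v_\ell,w_\ell)$ rather than $b(w_\ell,v_\ell)$. Either ordering works anyway, because both \eqref{eq:coercivity_continuity} bounds are symmetric in their arguments.
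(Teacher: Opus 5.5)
Your proof is correct and follows the paper's argument. For the bound in \eqref{eq:lower}, the paper tests the dual characterization $\norm{\vec{P}_\ell\vec{B}_\ell\vec{x}_\ell}_{\vec{P}^{-1}_\ell}=\sup_{\vec{y}_\ell\neq 0}(\vec{P}_\ell\vec{B}_\ell\vec{x}_\ell,\vec{y}_\ell)_{\vec{P}^{-1}_\ell}/\norm{\vec{y}_\ell}_{\vec{P}^{-1}_\ell}$ with $\vec{y}_\ell=\vec{x}_\ell$, which is your Cauchy--Schwarz step in another form; for \eqref{eq:upper} it uses exactly your choice $\vec{y}_\ell=\vec{P}_\ell\vec{B}_\ell\vec{x}_\ell$ together with boundedness and $\enorm{w_\ell}\le C_{\textup{pre}}\norm{\vec{y}_\ell}_{\vec{P}^{-1}_\ell}$ (and the trivial case $\vec{P}_\ell\vec{B}_\ell\vec{x}_\ell=0$ before dividing, which you did not state).
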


\begin{proof}
	The estimates are trivial for \(\vec{x}_\ell = 0\). For the case \(\vec{x}_\ell \neq 0\), note that 
	\begin{equation*}
			\norm{ \vec{P}_\ell \vec{B}_\ell 
			\vec{x}_\ell }_{\vec{P}^{-1}_\ell}
			= 
			\sup_{\vec{y}_\ell \in \R^{N_{\ell}} \setminus \{0\}} 
			\frac{( \vec{P}_\ell \vec{B}_\ell 
			\vec{x}_\ell, \vec{y}_\ell)_{\vec{P}^{-1}_\ell}
			}{\norm{ \vec{y}_\ell }_{\vec{P}^{-1}_\ell}}
			\ge
			\frac{( \vec{P}_\ell \vec{B}_\ell 
			\vec{x}_\ell, \vec{x}_\ell)_{\vec{P}^{-1}_\ell}
			}{\norm{ \vec{x}_\ell }_{\vec{P}^{-1}_\ell}}
			=
			\frac{( \vec{B}_\ell  \vec{x}_\ell, 
			\vec{x}_\ell)_2}{\norm{ \vec{x}_\ell }_{\vec{P}^{-1}_\ell}}.
	\end{equation*}
	Moreover,~\eqref{eq:coercivity_continuity} and 
	\eqref{eq:equivalence_preconditioner_energy_norm} result in
	\begin{equation*}
		\frac{( \vec{B}_\ell  \vec{x}_\ell, 
			\vec{x}_\ell)_2}{\norm{ \vec{x}_\ell }_{\vec{P}^{-1}_\ell}} =
			\frac{b(v_\ell, v_\ell)}{\norm{ \vec{x}_\ell }_{\vec{P}^{-1}_\ell}}
			\eqreff{eq:coercivity_continuity}
			\ge
			\frac{C_{\textup{ell}} \, \enorm{v_\ell}^2}{\norm{ \vec{x}_\ell }_{\vec{P}^{-1}_\ell}}
			\eqreff{eq:equivalence_preconditioner_energy_norm}
			\ge
      \frac{C_{\textup{ell}}}{C_{\textup{pre}}} 
      \enorm{v_\ell} 
	  \eqreff{eq:equivalence_preconditioner_energy_norm}{\ge}
			  \frac{C_{\textup{ell}}}{C_{\textup{pre}}^2}  \, \norm{ 
      \vec{x}_\ell }_{\vec{P}^{-1}_\ell}.
	\end{equation*}
	The combination of the previous two formulas establishes~\eqref{eq:lower}.
	For~\eqref{eq:upper}, set \(\vec{y}_\ell 
	\coloneqq \vec{P}_\ell \vec{B}_\ell 
	\vec{x}_\ell \in \R^{N_{\ell}}\) and 
	consider \(w_\ell \coloneqq \sum_{j=1}^{N_{\ell}} (\vec{y}_\ell)_j 
	\varphi_{\ell,j} \in \XX_\ell\). Then, \eqref{eq:coercivity_continuity} and
	\eqref{eq:equivalence_preconditioner_energy_norm} result in
	\begin{equation*}
		\begin{aligned}
			\norm{ \vec{y}_\ell }_{\vec{P}^{-1}_\ell}^2
			&=
			( \vec{P}_\ell \vec{B}_\ell \vec{x}_\ell, 
			\vec{y}_\ell)_{\vec{P}^{-1}_\ell}
			=
			( \vec{B}_\ell \vec{x}_\ell, 
			\vec{y}_\ell)_{2}
			\\
			&=
			b(v_\ell, w_\ell)
			\eqreff{eq:coercivity_continuity}
			\le
			C_{\textup{bnd}} \, \enorm{v_\ell} \, \enorm{w_\ell}
			\eqreff{eq:equivalence_preconditioner_energy_norm}
			\le
      C_{\textup{bnd}} \, C_{\textup{pre}} \, \enorm{v_\ell} \, \norm{ \vec{y}_\ell }_{\vec{P}^{-1}_\ell}.
		\end{aligned}
	\end{equation*}
  Rearranging the estimate and emplyoing~\eqref{eq:equivalence_preconditioner_energy_norm}, we prove~\eqref{eq:upper} and conclude the proof.
\end{proof}

Lemma~\ref{lem:equivalence_preconditioner_norm} can be used to establish 
the desired equivalence of the energy norm of the algebraic error and the 
algebraic residual norm.

\begin{lemma}[equivalence of discrete norm and functional algebraic error]
	\label{lem:equivalence_residual_norm}
	Let $\vec{x}_{\ell}^k$ denote the $k$-th iterate computed in Algorithm~\ref{alg:optimally_preconditioned_gmres}, $\vec{s}_{\ell}^k$ the preconditioned residual and $u_{\ell}^k \coloneqq \sum_{j=1}^{N_{\ell}} (\vec{x}^k_\ell)_j \varphi_{\ell,j} \in \XX_{\ell}$ the corresponding function.
	Then, there holds the equivalence
	\begin{equation}\label{eq:equivalence_residual_norm}
    C_{\textup{ell}} \, C_{\textup{pre}}^{-1} \,
    \enorm{ u_\ell^\star - u_\ell^k}
    \le
		\norm{ \vec{s}^k_\ell }_{\vec{P}^{-1}_\ell} 
        \le  C_{\textup{bnd}} \, C_{\textup{pre}}\, 
		\enorm{ u_\ell^\star - u_\ell^k }.
	\end{equation}
	In particular, the equivalence is $h$- and $p$-robust.
\end{lemma}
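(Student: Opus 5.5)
The plan is to apply Lemma~\ref{lem:equivalence_preconditioner_norm} to the coefficient vector of the algebraic error. Let $\vec{x}^\star_\ell$ be the exact solution of~\eqref{eq:lin_system} and set $\vec{e}_\ell^k \coloneqq \vec{x}^\star_\ell - \vec{x}^k_\ell$. The corresponding function is $u^\star_\ell - u^k_\ell$. This uses linearity of the coefficient-to-function map and the fact that $\vec{x}^\star_\ell$ represents the Galerkin solution $u^\star_\ell$ of~\eqref{eq:discrete_formulation}; the latter holds because the Galerkin system is exactly $\vec{B}_\ell\vec{x}^\star_\ell = \vec{d}_\ell$ with $(\vec{B}_\ell)_{jk} = b(\varphi_{\ell,k},\varphi_{\ell,j})$ and $(\vec{d}_\ell)_j = F(\varphi_{\ell,j})$.

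The key identity to establish is
\[
\vec{s}^k_\ell = \vec{P}_\ell \vec{r}^k_\ell = \vec{P}_\ell(\vec{d}_\ell - \vec{B}_\ell\vec{x}^k_\ell) = \vec{P}_\ell\vec{B}_\ell(\vec{x}^\star_\ell - \vec{x}^k_\ell) = \vec{P}_\ell\vec{B}_\ell\vec{e}^k_\ell .
\]
It follows directly from the definitions in Algorithm~\ref{alg:optimally_preconditioned_gmres}\,(a) and~(v). This holds for every $k$, regardless of restarts, because $\vec{r}^k_\ell$ is always recomputed as the true residual of the current iterate.

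Next, apply~\eqref{eq:lower} with $\vec{x}_\ell = \vec{e}^k_\ell$ and $v_\ell = u^\star_\ell - u^k_\ell$. Its right inequality gives $\enorm{u^\star_\ell - u^k_\ell} \le C_{\textup{ell}}^{-1} C_{\textup{pre}} \norm{\vec{s}^k_\ell}_{\vec{P}^{-1}_\ell}$, which is the lower bound in~\eqref{eq:equivalence_residual_norm}. Similarly, the left inequality of~\eqref{eq:upper} gives $\norm{\vec{s}^k_\ell}_{\vec{P}^{-1}_\ell} \le C_{\textup{bnd}} C_{\textup{pre}} \enorm{u^\star_\ell - u^k_\ell}$, which is the upper bound. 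The robustness in $h$ and $p$ is inherited from that of $C_{\textup{pre}}$, $C_{\textup{ell}}$, and $C_{\textup{bnd}}$.

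No step presents a real obstacle. The only point requiring care is identifying $\vec{s}^k_\ell$ with $\vec{P}_\ell\vec{B}_\ell$ applied to the error vector, which rests on the algebraic residual being computed exactly rather than by the Arnoldi recursion.
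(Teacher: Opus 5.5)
Your proposal is correct and follows the paper's proof essentially verbatim: you identify $\vec{s}^k_\ell = \vec{P}_\ell\vec{B}_\ell(\vec{x}^\star_\ell - \vec{x}^k_\ell)$ via~\eqref{eq:lin_system}, then apply the right inequality of~\eqref{eq:lower} and the left inequality of~\eqref{eq:upper} from Lemma~\ref{lem:equivalence_preconditioner_norm} to the error vector representing $u^\star_\ell - u^k_\ell$. The constants $C_{\textup{ell}}C_{\textup{pre}}^{-1}$ and $C_{\textup{bnd}}C_{\textup{pre}}$ come out exactly as stated.
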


\begin{proof}
	The definition of $\vec{s}_{\ell}^k$ in Algorithm~\ref{alg:optimally_preconditioned_gmres} and~\eqref{eq:lin_system} yield
	\begin{equation*}
			\norm{ \vec{s}^k_\ell }_{\vec{P}^{-1}_\ell}
		=
		\norm{ \vec{P}_\ell (\vec{d}_\ell 
		- \vec{B}_\ell \vec{x}^k_\ell) }_{\vec{P}^{-1}_\ell}
        \eqreff{eq:lin_system}
		 =
		\norm{ \vec{P}_\ell 
		(\vec{B}_\ell \vec{x}^\star_\ell 
		- \vec{B}_\ell \vec{x}_\ell^k) }_{\vec{P}^{-1}_\ell}.
	\end{equation*}
    Combining this with 
    \eqref{eq:equivalence_preconditioner_norm} 
    and $u_{\ell}^{\star}-u_{\ell}^k = \sum_{j=1}^{N_{\ell}} (\vec{x}_{\ell}^{\star}-\vec{x}_{\ell})_j \varphi_{\ell,j}$ 
    finishes the proof.
\end{proof}

\begin{proposition}[discrete contraction of optimally preconditioned GMRES]
	\label{lem:contraction_gmres}
	Let $k_{\max} \in \N$ be abitrary. Suppose formally that $\Lambda < 0$ so that Algorithm~\ref{alg:optimally_preconditioned_gmres} does not terminate.
	Then, the optimally preconditioned GMRES method from 
	Algorithm~\ref{alg:optimally_preconditioned_gmres} 
	guarantees
	\begin{equation}\label{eq:contr_gmres}
		\norm{ \vec{s}^{k}_\ell }_{\vec{P}^{-1}_\ell} 
		\le q_{\textup{alg}} \, 
		\norm{ \vec{s}^{k-1}_\ell }_{\vec{P}^{-1}_\ell} \quad \text{for all } k \in \N,
	\end{equation}
	with \(0 < q_{\textup{alg}}  < 1\) depending only on \(C_{\textup{bnd}}\), 
	\(C_{\textup{ell}}\), and \(C_{\textup{pre}}\). 
\end{proposition}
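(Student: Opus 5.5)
The plan is to exploit the minimal-residual property of each (restarted) GMRES step in the inner product $(\cdot,\cdot)_{\vec{P}^{-1}_\ell}$, together with an Elman-type one-step bound for the preconditioned operator $\vec{A}_\ell \coloneqq \vec{P}_\ell \vec{B}_\ell$.

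\textbf{Minimization property.} Steps (ii)--(iv) of Algorithm~\ref{alg:optimally_preconditioned_gmres} perform an Arnoldi process in $(\cdot,\cdot)_{\vec{P}^{-1}_\ell}$ started from $\vec{s}_\ell^{Rk_{\max}}$, with $\vec{v}^j$ spanning the Krylov space $\mathcal{K}_K(\vec{A}_\ell, \vec{s}_\ell^{Rk_{\max}})$. The residual relation $\vec{s}_\ell^k = \vec{s}_\ell^{Rk_{\max}} - \vec{A}_\ell \sum_j \vec{y}_j \vec{v}^{k+j-K}$ and the $\vec{P}^{-1}_\ell$-orthonormality of the Arnoldi vectors give
\[
\norm{\vec{s}_\ell^k}_{\vec{P}^{-1}_\ell} = \min_{\vec{z} \in \mathcal{K}_K} \norm{\vec{s}_\ell^{Rk_{\max}} - \vec{A}_\ell \vec{z}}_{\vec{P}^{-1}_\ell}.
\]
This is the standard identity of~\cite{SarkisSzyld07}, with the Euclidean product replaced by the weighted one.

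\textbf{Reduction to one step.} Since the Krylov spaces are nested within one cycle, $\vec{s}_\ell^{k-1}$ lies in the same affine set $\vec{s}_\ell^{Rk_{\max}} - \vec{A}_\ell\mathcal{K}_{K-1}$. Moreover, $\vec{s}_\ell^{k-1}$ belongs to $\mathcal{K}_K$, because $\mathcal{K}_K$ contains $\vec{s}_\ell^{Rk_{\max}}$ and $\vec{A}_\ell\mathcal{K}_{K-1}$. Hence $\vec{z} = \vec{z}_{k-1} + \alpha\, \vec{s}_\ell^{k-1}$ is admissible for every $\alpha\in\R$. The same choice works at a restart ($K=1$), where the initial vector is $\vec{s}_\ell^{k-1}$ itself. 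In all cases this yields
\[
\norm{\vec{s}_\ell^k}_{\vec{P}^{-1}_\ell} \le \min_{\alpha\in\R} \norm{(\vec{I}-\alpha \vec{A}_\ell)\vec{s}_\ell^{k-1}}_{\vec{P}^{-1}_\ell}.
\]

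\textbf{Elman estimate.} For arbitrary $\vec{s}$, expanding the square and choosing $\alpha = (\vec{A}_\ell\vec{s},\vec{s})_{\vec{P}^{-1}_\ell}/\norm{\vec{A}_\ell\vec{s}}^2_{\vec{P}^{-1}_\ell}$ gives
\[
\norm{(\vec{I}-\alpha\vec{A}_\ell)\vec{s}}^2_{\vec{P}^{-1}_\ell} = \Bigl(1 - \frac{(\vec{A}_\ell\vec{s},\vec{s})^2_{\vec{P}^{-1}_\ell}}{\norm{\vec{A}_\ell\vec{s}}^2_{\vec{P}^{-1}_\ell}\norm{\vec{s}}^2_{\vec{P}^{-1}_\ell}}\Bigr)\norm{\vec{s}}^2_{\vec{P}^{-1}_\ell}.
\]
Note that $(\vec{A}_\ell\vec{s},\vec{s})_{\vec{P}^{-1}_\ell} = (\vec{B}_\ell\vec{s},\vec{s})_2 = b(v,v)$, where $v$ is the function associated with $\vec{s}$. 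The computations in the proof of Lemma~\ref{lem:equivalence_preconditioner_norm} give
\[
b(v,v) \ge C_{\textup{ell}}C_{\textup{pre}}^{-2}\norm{\vec{s}}^2_{\vec{P}^{-1}_\ell}
\quad\text{and}\quad
\norm{\vec{A}_\ell\vec{s}}_{\vec{P}^{-1}_\ell} \le C_{\textup{bnd}}C_{\textup{pre}}^2\norm{\vec{s}}_{\vec{P}^{-1}_\ell},
\]
the latter by~\eqref{eq:upper}. Hence
\[
q_{\textup{alg}}^2 \coloneqq 1 - \frac{C_{\textup{ell}}^2}{C_{\textup{bnd}}^2C_{\textup{pre}}^8} \in [0,1),
\]
and positivity of $q_{\textup{alg}}$ can be enforced by enlarging it if needed.

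\textbf{Main obstacle.} The delicate step is the bookkeeping in the minimization property: one must verify that the small least-squares problem in (iv), with the Hessenberg matrix built from $\vec{P}^{-1}_\ell$-orthonormal $\vec{v}^j$, really minimizes the $\vec{P}^{-1}_\ell$-norm of the preconditioned residual. One must also check that after each restart the vector $\vec{s}_\ell^{k-1}$ is the new starting residual, so that the one-step comparison remains valid across restarts. If the Arnoldi process breaks down ($\vec{H}_{K+1,K}=0$), the residual is already zero and the estimate holds trivially.
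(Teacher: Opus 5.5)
Your proof is correct and follows the paper's route: the minimal-residual property of each restarted cycle in the $(\cdot,\cdot)_{\vec{P}^{-1}_\ell}$-inner product, then multiplying the previous optimal polynomial by the factor $(1-\alpha t)$, then an Elman-type bound via coercivity and \eqref{eq:upper}. Two small differences are worth noting. First, you choose $\alpha$ for the specific vector $\vec{s}_\ell^{k-1}$, which avoids the paper's slip of bounding the operator norm $\norm{1-\alpha\vec{P}_\ell\vec{B}_\ell}_{\vec{P}^{-1}_\ell}$ with an $\alpha$ that depends on the test vector. Second, your exponent $C_{\textup{pre}}^{8}$ (instead of the paper's $C_{\textup{pre}}^{4}$) comes only from converting both $b(v,v)$ and $\norm{\vec{P}_\ell\vec{B}_\ell\vec{s}}_{\vec{P}^{-1}_\ell}$ to $\norm{\vec{s}}_{\vec{P}^{-1}_\ell}$ rather than keeping $\enorm{v}$ as the intermediate quantity; it is still admissible for the statement.
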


\begin{proof}
By construction, the preconditioned GMRES method is a Krylov subspace method which 
minimizes residuals with respect to the preconditioned inner product over the 
affine Krylov space generated by the preconditioned matrix 
$\vec{P}_\ell \vec{B}_\ell$.
Note that the Krylov spaces are reset at every restart of Algorithm~\ref{alg:optimally_preconditioned_gmres}. 
Recalling $K = \mod{k-1}{k_{\max}}+1 \geq 1$ and the parameter $R$ that counts the number of restarts and noting that $\vec{s}_{\ell}^{Rk_{\max}}$ is the initial residual of the current GMRES run, it hence follows
\begin{align}\label{eq:GMRES_residualpresentation}
\norm{ \vec{s}^k_\ell }_{\vec{P}^{-1}_\ell} 
= \min_{p\in \mathbb{P}_{K}, \, 
p(0)=1} \norm{ p(\vec{P}_\ell 
\vec{B}_\ell) \vec{s}_\ell^{Rk_{\max}} }_{\vec{P}^{-1}_\ell},
\end{align}
where $\vec{s}_\ell^k 
= \vec{P}_\ell(\vec{d}_\ell 
-\vec{B}_\ell\vec{x}_\ell^k) 
= \vec{P}_\ell\vec{B}_\ell(\vec{x}_\ell
-\vec{x}_\ell^k)$ and $
\mathbb{P}_{K}$ denotes the space of polynomials (in one variable) of 
degree $\leq K$; see~\cite{EES83,SarkisSzyld07} and references therein. 
This yields
\begin{align}\label{eq:residual_1}
    \norm{ \vec{s}_\ell^{k-1} }_{\vec{P}^{-1}_\ell} 
	= \min_{p\in\mathbb{P}_{K-1},p(0)=1} \norm{ p(\vec{P}_\ell
	\vec{B}_\ell) \vec{s}_\ell^{Rk_{\max}} }_{\vec{P}^{-1}_\ell}
    = \norm{ p_{K-1}(\vec{P}_\ell
	\vec{B}_\ell) \vec{s}_\ell^{Rk_{\max}} }_{\vec{P}^{-1}_\ell},
\end{align}
Let $\alpha \in\R$ (to be fixed below) and define the polynomial 
$\widetilde p$ by $\widetilde p(t) = (1-\alpha t)p_{K-1}(t)$, $t\in\R$.
Then, $\widetilde p\in \mathbb{P}_{K}$. 
Furthermore, $\widetilde p(0) = (1-\alpha 0)p_{K-1}(0) = 1$. Therefore, 
\begin{align*}
   \norm{ &\vec{s}_\ell^k }_{\vec{P}^{-1}_\ell} 
	= \,\min_{p\in\mathbb{P}_{K},p(0)=1} 
	\norm{ p(\vec{P}_\ell\vec{B}_\ell)
	\vec{s}_\ell^{Rk_{\max}} }_{\vec{P}^{-1}_\ell}
    \leq \norm{ \widetilde p(\vec{P}_\ell \vec{B}_\ell)
	\vec{s}_\ell^{Rk_{\max}} }_{\vec{P}^{-1}_\ell} \\
    &
    = \,\norm{ (1-\alpha\vec{P}_\ell\vec{B}_\ell)
	p_{K-1}(\vec{P}_\ell\vec{B}_\ell)
	\vec{s}_\ell^{Rk_{\max}} }_{\vec{P}^{-1}_\ell} \\
   & \leq \,\norm{ (1-\alpha\vec{P}_\ell
	\vec{B}_\ell) }_{\vec{P}^{-1}_\ell} 
	\norm{ p_{K-1}(\vec{P}_\ell\vec{B}_\ell)
	\vec{s}_\ell^{Rk_{\max}} }_{\vec{P}^{-1}_\ell} 
	\eqreff*{eq:residual_1}= \, \norm{ (1-\alpha\vec{P}_\ell
	\vec{B}_\ell) }_{\vec{P}^{-1}_\ell} 
	\norm{ \vec{s}_\ell^{k-1} }_{\vec{P}^{-1}_\ell}.
\end{align*}
To prove
$\norm{(1-\alpha\vec{P}_\ell
\vec{B}_\ell)}_{\vec{P}^{-1}_\ell}\leq q_\mathrm{alg}$, let
$\vec{x}_\ell\in\R^{N_{\ell}}$ with 
$\norm{ \vec{x}_\ell }_{\vec{P}^{-1}_\ell}=1$. 
Then, there holds
\begin{align*}
    \norm{ (1-\alpha\vec{P}_\ell\vec{B}_\ell)
	\vec{x}_\ell }_{\vec{P}^{-1}_\ell}^2
	&= \norm{ \vec{x}_\ell }_{\vec{P}^{-1}_\ell}^2 
	- 2\alpha (\vec{P}_\ell
	\vec{B}_\ell\vec{x}_\ell, 
	\vec{x}_\ell )_{\vec{P}^{-1}_\ell} + \alpha^2 
	\norm{ \vec{P}_\ell \vec{B}_\ell
	\vec{x}_\ell }_{\vec{P}^{-1}_\ell}^2.
\end{align*}
Viewing the right-hand side as a quadratic polynomial in $\alpha$, 
it attains its minimum for 
$\alpha = (\vec{P}_\ell\vec{B}_\ell\vec{x}_\ell,\vec{x}_\ell )_{\vec{P}^{-1}_\ell}/\norm{ \vec{P}_\ell \vec{B}_\ell
  \vec{x}_\ell }_{\vec{P}^{-1}_\ell}^2$.
  This choice of $\alpha$ leads to
  \begin{align*}
    \norm{ \vec{x}_\ell }_{\vec{P}^{-1}_\ell}^2 - 2\alpha (\vec{P}_\ell\vec{B}_\ell\vec{x}_\ell, \vec{x}_\ell )_{\vec{P}^{-1}_\ell} + \alpha^2 \norm{ \vec{P}_\ell \vec{B}_\ell\vec{x}_\ell }_{\vec{P}^{-1}_\ell}^2
    = 1-\frac{(\vec{P}_\ell\vec{B}_\ell\vec{x}_\ell, \vec{x}_\ell )_{\vec{P}^{-1}_\ell}^2}{\norm{ \vec{P}_\ell \vec{B}_\ell\vec{x}_\ell }_{\vec{P}^{-1}_\ell}^2}.
  \end{align*}
  With \(v_\ell \coloneqq \sum_{j=1}^{N_{\ell}} (\vec{x}_\ell)_j 
  \varphi_{\ell,j} \in \XX_\ell\),~\eqref{eq:coercivity_continuity} and~\eqref{eq:equivalence_preconditioner_energy_norm} yield 
  \begin{equation*}
	(\vec{P}_\ell\vec{B}_\ell\vec{x}_\ell, \vec{x}_\ell )_{\vec{P}^{-1}_\ell} = (\vec{B}_{\ell}\vec{x}_{\ell}, \vec{x}_{\ell})_2 = b(v_{\ell}, v_{\ell}) \eqreff{eq:coercivity_continuity}\geq C_{\textup{ell}} \, \enorm{v_\ell}^2.
  \end{equation*}
  In combination with Lemma~\ref{lem:equivalence_preconditioner_norm}, we further get that
  \begin{align*}
    1-\frac{(\vec{P}_\ell\vec{B}_\ell\vec{x}_\ell, \vec{x}_\ell )_{\vec{P}^{-1}_\ell}^2}{\norm{ \vec{P}_\ell \vec{B}_\ell\vec{x}_\ell }_{\vec{P}^{-1}_\ell}^2}  \eqreff{eq:equivalence_preconditioner_norm}\leq 1- \frac{C_{\textup{ell}}^2 \enorm{v_{\ell}}^4}{C_{\textup{bnd}}^2C_{\textup{pre}}^2\enorm{v_\ell}^2} \eqreff{eq:equivalence_preconditioner_energy_norm}\leq 1 - \frac{C_{\textup{ell}}^2}{C_{\textup{bnd}}^2C_{\textup{pre}}^4} \norm{\vec{x}_{\ell}}^2_{\vec{P}_{\ell}^{-1}}
  \end{align*}
  Altogether, we conclude that
\begin{align*}
    \norm{1-\alpha\vec{P}_\ell
	\vec{B}_\ell}_{\vec{P}^{-1}_\ell} 
  \leq \left(1-\frac{C_{\textup{ell}}^2}{C_{\textup{bnd}}^2} 
	\frac{1}{
    C_{\textup{pre}}^4}\right)^{1/2}
	\eqqcolon q_\mathrm{alg} < 1.
\end{align*}
This shows~\eqref{eq:contr_gmres} and hence concludes the proof.
\end{proof}

\begin{remark}
Note that Proposition~\ref{lem:contraction_gmres} ensures, in particular, discrete contraction for $k_{\max} = 1$. In this case only a one-dimensional Arnoldi basis is built in every step of PGMRES.
\end{remark}

\subsection{Multilevel additive Schwarz preconditioner}\label{sec:AS}

To define the multilevel additive Schwarz preconditioner
\(\vec{P}_{\ell}^{\AS} \in \R^{N_{\ell}\times N_{\ell}}\), we first introduce the necessary notation.
Let \(\mathcal{V}_\ell\) denote the set of all vertices of the triangulation
\(\TT_\ell\).
For each vertex \(z \in \mathcal{V}_\ell\), we define the associated
vertex patch by
\begin{equation}\label{eq:vertex_patch}
	\TT_{\ell}(z)
	\coloneqq 
	\{ T \in \TT_{\ell} \colon z \in T \} \quad \text{and} \quad \omega_{\ell}(z) \coloneqq \textnormal{interior} \big(\bigcup_{T \in \TT_{\ell}(z)} T \big).
\end{equation}
Denote by \(\mathcal{V}_\ell^{+}\) the set of newly created
vertices in \(\TT_\ell\) and their direct neighbors, i.e.,
\begin{equation}\label{eq:vertex_sets}
	\mathcal{V}_\ell^{+} 
	\coloneqq 
	\mathcal{V}_\ell \setminus \mathcal{V}_{\ell-1}
	\cup 
	\{ z' \in \mathcal{V}_{\ell} \cap \mathcal{V}_{\ell-1}
	\colon \omega_{\ell}(z') \neq \omega_{\ell-1}(z') \}.
\end{equation}
To define the correction subspaces underlying the preconditioner, we set
\begin{equation*}
\XX_{\ell}^q \coloneqq \SS_0^{q}(\TT_{\ell}), 
\quad 
\XX^q_{\ell,z} \coloneqq \SS_0^{q}(\TT_{\ell}(z)) \quad \text{for all } z \in \mathcal{V}_\ell, \text{ and } q \in \{1,p\}.
\end{equation*}
To build the preconditioner on level $\ell$ we employ $q=1$ for $0 \leq \ell' < \ell$ respectively $q=p$ for $\ell' = \ell$.
Let \(\vec{A}_{\ell} \in \R^{N_{\ell}\times N_{\ell}}\) denote the Galerkin matrix for principal part \(a(\cdot,\cdot)\), i.e.,
\begin{equation*}
(\vec{A}_{\ell})_{jk}
\coloneqq
a(\varphi_{\ell,k}, \varphi_{\ell,j}),
\quad \text{for all } j,k = 1,\dots,N_{\ell}.
\end{equation*}
Note that the induced norm satisfies $\norm{\vec{x}_{\ell}}_{\vec{A}_{\ell}} = \enorm{u_{\ell}}$ for $u_\ell = \sum_{j=1}^{N_{\ell}} (\vec{x})_j \varphi_{\ell,j}$ and all $\vec{x_{\ell}} \in \R^{N_{\ell}}$.
For $0 \leq \ell' < \ell$, let
\(\II^{1}_{\ell'} : \XX_{\ell'}^1 \to \XX^{p}_\ell = \XX_{\ell}\) denote the canonical embedding,
with matrix representation
\(\vec{I}^{1}_{\ell'} \in \R^{N_{\ell}\times N_{\ell'}^1}\),
where \(N_{\ell'}^1 \coloneqq \dim(\XX_{\ell'}^1)\).
Similarly, the embeddings
\(\II^{p}_{\ell,z} : \XX^{p}_{\ell,z} \to \XX^{p}_\ell = \XX_{\ell}\)
are represented by matrices
\(\vec{I}^{p}_{\ell,z} \in \R^{N_{\ell}\times N^{p}_{\ell,z}}\),
with \(N_{\ell,z} \coloneqq \dim(\XX^{p}_{\ell,z})\).

Denote by \(\vec{A}_{\ell'}^1\) and \(\vec{A}^{p}_{\ell,z}\) the Galerkin matrices
corresponding to the spaces \(\XX_{\ell'}^1\) and \(\XX^{p}_{\ell,z}\), respectively.
For each $0 \leq \ell' < \ell$, we define the diagonal matrix
\((\vec{D}_{\ell'}^+)^{-1} \in \R^{N_{\ell'}^1 \times N_{\ell'}^1}\) by
\begin{equation*}
[(\vec{D}^+_{\ell'})^{-1}]_{jk}
\coloneqq
\begin{cases}
\delta_{jk} \, ([\vec{A}^1_{\ell'}]_{jj})^{-1},
& \text{if } z_j \in \mathcal{V}_{\ell'}^+, \\
0,
& \text{otherwise}.
\end{cases}
\end{equation*}
With this notation, the additive Schwarz preconditioner is defined as
\begin{equation*}
\vec{P}_{\ell}^{\AS}
\coloneqq
\vec{I}^{1}_0 (\vec{A}^{1}_0)^{-1} (\vec{I}^{1}_0)^T
+ \sum_{\ell' = 1}^{\ell-1}
\vec{I}^{1}_{\ell'} (\vec{D}_{\ell'}^+)^{-1} (\vec{I}^{1}_{\ell'})^T
+ \sum_{z \in \mathcal{V}_{\ell}}
\vec{I}^{p}_{\ell,z} (\vec{A}^{p}_{\ell,z})^{-1} (\vec{I}^{p}_{\ell,z})^T.
\end{equation*}

\begin{remark}
Note that the matrices $\vec{I}_{\ell'}^1$ and $\vec{I}_{\ell,z}^p$ are introduced solely for the purpose of the analysis and are not used in the implementation.
In practice, only the prolongation and restriction matrices between consecutive levels are required. 
As is standard practice in multigrid methods, these can be assembled in $\mathcal{O}(\# \VV_{\ell'}^+)$ operations on the intermediate levels and in $\mathcal{O}(\# \TT_{\ell})$ operations on the finest level, thus preserving the overall linear complexity of the method; see, e.g.,~\cite{XQ94}.
\end{remark}

\begin{lemma}[robust additive Schwarz preconditioner]\label{lemma:additive_schwarz}
The additive Schwarz preconditioner \(\vec{P}_{\ell}^{\AS}\) is SPD and satisfies the norm
  equivalence~\eqref{eq:equivalence_preconditioner_energy_norm}, where the constant \(C_{\textup{pre}}\) depends only on $\Omega$, $\TT_0$, $d$, $\gamma$-shape regularity, and~$\vec{K}$.
\end{lemma}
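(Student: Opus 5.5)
The plan is to rewrite \(\vec{P}_{\ell}^{\AS}\) in the abstract additive Schwarz framework and then invoke the known stable-decomposition and strengthened Cauchy--Schwarz estimates for local multilevel methods on NVB meshes, as in~\cite{wz2017, imps2022, hmp26}.

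First, symmetry of \(\vec{P}_{\ell}^{\AS}\) is immediate, since every summand has the form \(\vec{I}\vec{M}\vec{I}^T\) with \(\vec{M}\) SPD (on the diagonal blocks) or symmetric positive semidefinite. For definiteness, note that \(\vec{I}^1_0 (\vec{A}^1_0)^{-1} (\vec{I}^1_0)^T\) and the patch terms are positive semidefinite. Moreover, the patch spaces \(\XX^p_{\ell,z}\), \(z \in \mathcal{V}_\ell\), span \(\XX_\ell\), because every basis function is supported in some vertex patch. Hence \(\vec{y}^T \vec{P}_\ell^{\AS} \vec{y} = 0\) forces \((\vec{I}^p_{\ell,z})^T\vec{y}=0\) for all \(z\), and therefore \(\vec{y}=0\).

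Second, I use the standard identity for additive Schwarz operators. Define the local Ritz-type projections \(\mathcal{P}_{\ell',z}\colon \XX_\ell \to \operatorname{span}\{\varphi^1_{\ell',z}\}\) with respect to \(a(\cdot,\cdot)\); for \(\ell'\ge1\), \(z \in \mathcal{V}_{\ell'}^+\), the diagonal scaling by \((\vec{A}^1_{\ell'})_{jj}^{-1}\) is exactly the \(a\)-orthogonal projection onto the span of one hat function. Let also \(\mathcal{P}_0\) be the projection onto \(\XX^1_0\) and \(\mathcal{P}_{\ell,z}\) the projection onto \(\XX^p_{\ell,z}\). Then \(\vec{P}_\ell^{\AS}\vec{A}_\ell\) is the matrix of \(\mathcal{S} := \mathcal{P}_0 + \sum_{\ell'}\sum_{z} \mathcal{P}_{\ell',z} + \sum_{z}\mathcal{P}_{\ell,z}\), and the well-known additive Schwarz identity gives
\begin{equation*}
\norm{\vec{x}_\ell}^2_{(\vec{P}^{\AS}_\ell)^{-1}} = \min\Big\{ \enorm{v_0}^2 + \sum_{\ell'=1}^{\ell-1}\sum_{z\in\mathcal{V}_{\ell'}^+}\enorm{v_{\ell',z}}^2 + \sum_{z\in\mathcal{V}_\ell}\enorm{v_{\ell,z}}^2 \Big\},
\end{equation*}
where the minimum runs over all decompositions of \(v_\ell\) into these subspaces. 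It therefore suffices to prove two bounds with \(h\)- and \(p\)-robust constants.

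The upper bound \(\norm{\vec{x}_\ell}_{(\vec{P}^{\AS}_\ell)^{-1}} \lesssim \enorm{v_\ell}\) requires a stable decomposition. I would first split \(v_\ell = v^1 + \sum_z v_{\ell,z}\) with \(v^1\in\XX^1_\ell\), using a \(p\)-robust local decomposition of Sch\"oberl--Melenk--Pechstein--Zaglmayr type. This is the step where \(p\)-robustness enters, and I would cite it from~\cite{hmp26}. I would then decompose \(v^1\) across the levels using a Scott--Zhang-type quasi-interpolation hierarchy. The local nature of NVB implies that the level-\(\ell'\) increments are supported only near \(\mathcal{V}_{\ell'}^+\), and the \(L^2\)/\(H^1\) stability of these quasi-interpolants on graded meshes yields \(\sum_{\ell'}\sum_z \enorm{v_{\ell',z}}^2 \lesssim \enorm{v^1}^2\). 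This is the \emph{main obstacle}: it requires the generation-based level analysis of~\cite{wz2017} with \(\ell\)-independent constants, and I would import it rather than reprove it.

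The lower bound \(\enorm{v_\ell} \lesssim \norm{\vec{x}_\ell}_{(\vec{P}^{\AS}_\ell)^{-1}}\) amounts to \(\lambda_{\max}(\mathcal{S}) \lesssim 1\). The patch term follows from finite overlap, whose bound depends on \(\gamma\)-shape regularity. For the multilevel terms I need a strengthened Cauchy--Schwarz inequality across levels, again available from~\cite{wz2017, imps2022}. Finally, the coefficient \(\vec{K}\) enters only through the equivalence \(\enorm{\cdot}\simeq\norm{\nabla\cdot}_{L^2}\), with constants given by the bounds on the eigenvalues of \(\vec{K}\). Squaring both estimates yields~\eqref{eq:equivalence_preconditioner_energy_norm} with \(C_{\textup{pre}}\) depending only on the stated quantities.
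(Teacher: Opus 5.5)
Your proof is correct, but it works one level deeper than the paper's proof.

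\textbf{What the paper does.} The paper treats the SPD property and the two-sided spectral bound $c\le\lambda_{\min}(\vec{P}_\ell^{\AS}\vec{A}_\ell)$, $\lambda_{\max}(\vec{P}_\ell^{\AS}\vec{A}_\ell)\le C$ as a black box imported from \cite[Corollary~19]{hmp26} and \cite[Theorem~C.1]{tw05}. The proof itself consists only of an elementary conversion of these $\vec{A}_\ell$-spectral bounds into~\eqref{eq:equivalence_preconditioner_energy_norm}. The identity $\norm{\vec{x}_\ell}^2_{(\vec{P}_\ell^{\AS})^{-1}}=((\vec{P}_\ell^{\AS}\vec{A}_\ell)^{-1}\vec{x}_\ell,\vec{x}_\ell)_{\vec{A}_\ell}$ gives the upper bound with factor $c^{-1/2}$. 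A duality argument with $\vec{y}_\ell=\vec{P}_\ell^{\AS}\vec{A}_\ell\vec{x}_\ell$ gives the lower bound with factor $C^{1/2}$. This yields the explicit constant $C_{\textup{pre}}=\max\{\sqrt{C},1/\sqrt{c}\}$.

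\textbf{How your route compares.} Your additive Schwarz (Lions) identity $\norm{\vec{x}_\ell}^2_{(\vec{P}_\ell^{\AS})^{-1}}=a(\mathcal{S}^{-1}v_\ell,v_\ell)=\min\sum_i\enorm{v_i}^2$ is the same identity written in operator language. Your two directions match what the cited spectral bounds encode: stable decomposition corresponds to the $\lambda_{\min}$ bound, and strengthened Cauchy--Schwarz plus finite overlap corresponds to the $\lambda_{\max}$ bound.

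What your route buys is transparency. You give a self-contained SPD argument via $\XX_\ell=\sum_{z}\XX^p_{\ell,z}$. You also show clearly where each dependence enters: $h$-robustness through the local multilevel decomposition on NVB meshes, $p$-robustness through the Sch\"oberl--Melenk--Pechstein--Zaglmayr splitting, and the $\vec{K}$-dependence through $\enorm{\cdot}\simeq\norm{\nabla\cdot}_{L^2(\Omega)}$.

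What it does not buy is independence from the literature. The genuinely hard steps are still imported from \cite{wz2017,imps2022,hmp26}, so the paper's version is simply the shorter path to the same statement.

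\textbf{Two details to add if you write it out.}
\begin{itemize}
\item $\vec{P}_\ell^{\AS}$ contains lowest-order levels only up to $\ell-1$. The finest-level lowest-order increment of $v^1$ therefore has to be absorbed into the patch spaces, using $\XX^1_{\ell,z}\subseteq\XX^p_{\ell,z}$.
\item Each element lies in exactly $d+1$ vertex patches. The overlap bound for the patch term therefore does not depend on shape regularity at all.
\end{itemize}
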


\begin{proof}
From~\cite[Corollary 19]{hmp26} and~\cite[Theorem C.1]{tw05}, it follows that \(\vec{P}_{\ell}^{\AS}\) is SPD and satisfies the spectral equivalence
\begin{equation}\label{eq:norm_equivalence_AS}
   c
   \le
   \lambda_{\min}(\vec{P}_{\ell}^{\AS} \vec{A}_{\ell})
   =
   \norm{ (\vec{P}_{\ell}^{\AS}\vec{A}_{\ell})^{-1} }^{-1}_{\vec{A}_{\ell}}
   \quad \text{and} \quad
   \norm{\vec{P}_{\ell}^{\AS}\vec{A}_{\ell}}_{\vec{A}_{\ell}}
   =
   \lambda_{\max}(\vec{P}_{\ell}^{\AS}\vec{A}_{\ell})
   \le
   C,
\end{equation}
where the constants \(c\) and \(C\) depend only on $\Omega$, $\TT_0$, $d$, $\gamma$-shape regularity, and $\vec{K}$.
Let
\(v_\ell = \sum_{j=1}^{N_{\ell}} (\vec{x}_\ell)_j \varphi_{\ell,j} \in \XX_{\ell}\).
Using~\eqref{eq:norm_equivalence_AS}, we obtain
\begin{equation*}
\norm{ \vec{x}_{\ell} }_{(\vec{P}_{\ell}^{\AS})^{-1}}^2
=
((\vec{P}_{\ell}^{\AS}\vec{A}_{\ell})^{-1} \vec{x}_{\ell}, \vec{x}_{\ell})_{\vec{A}_{\ell}}
\le
\norm{ (\vec{P}_{\ell}^{\AS}\vec{A}_{\ell})^{-1}}_{\vec{A}_{\ell}}
\norm{\vec{x}_{\ell}}_{\vec{A}_{\ell}}^2
\eqreff{eq:norm_equivalence_AS}{\leq}
c^{-1} \enorm{v_{\ell}}^2.
\end{equation*}
Moreover,
\begin{equation*}
(\vec{P}_{\ell}^{\AS}\vec{A}_{\ell}\vec{x}_{\ell}, \vec{x}_{\ell})_{\vec{A}_{\ell}}
\le
\norm{\vec{P}_{\ell}^{\AS}\vec{A}_{\ell}}_{\vec{A}_{\ell}}
\norm{\vec{x}_{\ell}}_{\vec{A}_{\ell}}^2
\eqreff{eq:norm_equivalence_AS}{\leq}
C\,\enorm{v_{\ell}}^2.
\end{equation*}
Setting \(\vec{y}_{\ell} \coloneqq \vec{P}_{\ell}^{\AS}\vec{A}_{\ell}\vec{x}_{\ell}\), we obtain that
\begin{equation*}
\norm{ \vec{x}_{\ell} }_{(\vec{P}_{\ell}^{\AS})^{-1}}
=
\sup_{\vec{y}_{\ell} \neq 0}
\frac{(\vec{x}_{\ell}, \vec{y}_{\ell})_{(\vec{P}_{\ell}^{\AS})^{-1}}}
     {\|\vec{y}_{\ell}\|_{(\vec{P}_{\ell}^{\AS})^{-1}}}
\ge
\frac{(\vec{x}_{\ell}, \vec{x}_{\ell})_{\vec{A}_{\ell}}}
     {((\vec{P}_{\ell}^{\AS})^{-1}\vec{A}_{\ell}\vec{x}_{\ell},
       \vec{x}_{\ell})_{\vec{A}_{\ell}}^{1/2}}
\ge
C^{-1/2}\,\enorm{v_{\ell}}.
\end{equation*}
This concludes the proof with $C_{\textup{pre}}= \max\{\sqrt{C}, 1 / \sqrt{c}\}$.
\end{proof}

\subsection{Symmetric multigrid preconditioner}

As an alternative, we consider the symmetric multigrid preconditioner from~\cite[Section 5]{hmp26} as symmetrized and linearized version of~\cite{imps2022}.
In the following, we abbreviate $\prod_{i=0}^{\ell} \vec{C}_i \coloneqq \vec{C}_0 \vec{C}_1 \cdots \vec{C}_{\ell}$ for any matrices $\vec{C}_0, \dots, \vec{C}_{\ell}$.
Let \(\mu \coloneqq 1/(d+1)\) and define, for \(\ell' \in \{1,\dots,\ell\}\),
\begin{equation*}
\vec{M}^{\downarrow}_{\ell'}
\coloneqq
\vec{I}^{1}_{\ell'}(\vec{D}_{\ell'}^+)^{-1}(\vec{I}^{1}_{\ell'})^{T}
\prod_{i=\ell'+1}^{\ell}
(\vec{I}-\mu \vec{A}_{\ell}\vec{I}^{1}_{i}(\vec{D}_{i}^+)^{-1}(\vec{I}_i^{1})^{T}),
\end{equation*}
for $\ell' = 0$,
\begin{equation*}
	\vec{M}^{\uparrow}_{0}
\coloneqq
\vec{I}^{1}_{0}(\vec{A}_0^1)^{-1}(\vec{I}^{1}_{0})^{T}
\prod_{i=1}^{\ell}
(\vec{I}-\mu \vec{A}_{\ell}\vec{I}^{1}_{i}(\vec{D}_{i}^+)^{-1}(\vec{I}_i^{1})^{T}),
\end{equation*}
and, for \(\ell' \in \{1,\dots,\ell\}\),
\begin{equation*}
\vec{M}^{\uparrow}_{\ell'}
\coloneqq
\vec{I}_{\ell'}^{1} (\vec{D}_{\ell'}^+)^{-1} (\vec{I}^{1}_{\ell'})^{T}
\prod_{i=1}^{\ell'-1}
(\vec{I}-\mu \vec{A}_{\ell}\vec{I}^{1}_{\ell'-i}(\vec{D}_{\ell' -i}^+)^{-1} (\vec{I}_{\ell' - i}^{1})^{T})
\prod_{i=0}^{\ell}
(\vec{I}-\mu \vec{A}_{\ell}\vec{I}^{1}_{i}(\vec{D}_{i}^+)^{-1}(\vec{I}^{1}_i)^{T}).
\end{equation*}
For notational convenience, we set here \(\vec{D}_0^+ \coloneqq \mu \vec{A}_0^1\), $(\vec{D}^+_{\ell})^{-1} \coloneqq \sum_{z \in \mathcal{V}_{\ell}} \vec{I}^{p}_{\ell,z} (\vec{A}^{p}_{\ell,z})^{-1}(\vec{I}^{p}_{\ell,z})^T$, and $\vec{I}^{1}_{\ell} \coloneqq \vec{I}$.
The symmetric multigrid preconditioner is then defined as
\begin{equation}\label{eq:sMG_preconditioner}
\vec{P}_{\ell}^{\SMG} \coloneqq
\mu \sum_{\ell'=1}^{\ell} \vec{M}^{\downarrow}_{\ell'} + \vec{M}^{\uparrow}_{0} + \mu \sum_{\ell'=1}^{\ell} \vec{M}^{\uparrow}_{\ell'}.
\end{equation}

\begin{lemma}[symmetric multigrid preconditioner]
The symmetric multigrid preconditioner \(\vec{P}_{\ell}^{\SMG}\) is SPD and satisfies~\eqref{eq:equivalence_preconditioner_energy_norm} with constant
\(C_{\textup{pre}}= \max\{\sqrt{1+q},\, 1/\sqrt{1-q}\}\), where $0 < q < 1$ depends only on $\Omega$, $\TT_0$, $d$, $\gamma$-shape regularity, and $\vec{K}$.
\end{lemma}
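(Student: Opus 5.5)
The plan mirrors the proof of Lemma~\ref{lemma:additive_schwarz}. First we import the contraction property of the symmetric multigrid method from~\cite[Section 5]{hmp26}, which builds on~\cite{imps2022}. The error propagation operator of one symmetric V-cycle (post-smoothing adjoint to pre-smoothing) is $\vec{E}_\ell \coloneqq \vec{I} - \vec{P}_{\ell}^{\SMG}\vec{A}_\ell$. Expanding the products in $\vec{M}^{\downarrow}_{\ell'}$ and $\vec{M}^{\uparrow}_{\ell'}$ shows that the definition~\eqref{eq:sMG_preconditioner} is exactly the telescoped form of $\vec{I}-\vec{E}_\ell$. Symmetry of the down- and up-sweeps with respect to $(\cdot,\cdot)_{\vec{A}_\ell}$ implies that $\vec{E}_\ell$ is $\vec{A}_\ell$-self-adjoint. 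The $h$- and $p$-robust contraction result of~\cite{imps2022,hmp26} gives
\[
\norm{\vec{E}_\ell}_{\vec{A}_\ell}\le q<1,
\]
with $q$ depending only on $\Omega$, $\TT_0$, $d$, $\gamma$-shape regularity, and $\vec{K}$. The key structural point to check is that $\vec{P}_\ell^{\SMG}$ is symmetric in the Euclidean sense. This follows from $\vec{A}_\ell$-self-adjointness of $\vec{P}_\ell^{\SMG}\vec{A}_\ell$, since then $\vec{A}_\ell\vec{P}_\ell^{\SMG}\vec{A}_\ell$ is symmetric and $\vec{A}_\ell$ is invertible.

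Second, self-adjointness together with $\norm{\vec{E}_\ell}_{\vec{A}_\ell}\le q$ gives the spectral inclusion
\[
\sigma(\vec{P}_\ell^{\SMG}\vec{A}_\ell)\subseteq[1-q,1+q].
\]
Positivity of the spectrum yields that $\vec{P}_\ell^{\SMG}$ is positive definite, because $(\vec{P}_\ell^{\SMG}\vec{y},\vec{y})_2 = (\vec{P}_\ell^{\SMG}\vec{A}_\ell\vec{x},\vec{x})_{\vec{A}_\ell}>0$ for $\vec{y}=\vec{A}_\ell\vec{x}\neq 0$. This is the analogue of~\eqref{eq:norm_equivalence_AS} with $c=1-q$ and $C=1+q$.

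Third, we repeat verbatim the three estimates in the proof of Lemma~\ref{lemma:additive_schwarz}. The upper bound is $\norm{\vec{x}_\ell}^2_{(\vec{P}_\ell^{\SMG})^{-1}}\le \norm{(\vec{P}_\ell^{\SMG}\vec{A}_\ell)^{-1}}_{\vec{A}_\ell}\enorm{v_\ell}^2\le (1-q)^{-1}\enorm{v_\ell}^2$. The lower bound $\norm{\vec{x}_\ell}_{(\vec{P}_\ell^{\SMG})^{-1}}\ge (1+q)^{-1/2}\enorm{v_\ell}$ comes from the same duality argument. Together these give~\eqref{eq:equivalence_preconditioner_energy_norm} with $C_{\textup{pre}}=\max\{\sqrt{1+q},1/\sqrt{1-q}\}$.

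The main obstacle is the first step: verifying that the algebraic formula~\eqref{eq:sMG_preconditioner} coincides with $(\vec{I}-\vec{E}_\ell)\vec{A}_\ell^{-1}$ for the symmetric V-cycle, including the conventions $\vec{D}_0^+=\mu\vec{A}_0^1$ and $\vec{I}^1_\ell=\vec{I}$, so that the contraction from~\cite{hmp26} applies. I would try to cite~\cite[Section 5]{hmp26} directly for this identity and for symmetry. If that fails, I would prove it by induction over the levels, peeling off one factor $(\vec{I}-\mu\vec{A}_\ell\vec{I}^1_i(\vec{D}_i^+)^{-1}(\vec{I}^1_i)^T)$ at a time.
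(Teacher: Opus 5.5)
Your proposal is correct and follows essentially the same route as the paper: obtain from \cite{hmp26} the bounds $\norm{(\vec{P}_\ell^{\SMG}\vec{A}_\ell)^{-1}}_{\vec{A}_\ell}\le (1-q)^{-1}$ and $\norm{\vec{P}_\ell^{\SMG}\vec{A}_\ell}_{\vec{A}_\ell}\le 1+q$, then rerun the argument of Lemma~\ref{lemma:additive_schwarz}. The only difference is that the paper cites \cite[Lemma 15]{hmp26} for SPD and the proof of \cite[Corollary 16]{hmp26} for the spectral bounds directly, whereas you derive both from the contraction by $q$ and the $\vec{A}_\ell$-self-adjointness of the V-cycle error operator $\vec{I}-\vec{P}_\ell^{\SMG}\vec{A}_\ell$, which is a valid and slightly more self-contained way to obtain the same inputs.
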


\begin{proof}
	From the proof of~\cite[Corollary 16]{hmp26}, we obtain estimates analogous
to~\eqref{eq:norm_equivalence_AS}.
In particular, there exists a constant \(0 < q < 1\), independent of \(h\) and
\(p\), such that
\begin{equation*}
\norm{(\vec{P}_{\ell}^{\SMG}\vec{A}_{\ell})^{-1}}_{\vec{A}_{\ell}}
\le
\frac{1}{1-q},
\qquad
\norm{\vec{P}_{\ell}^{\SMG}\vec{A}_{\ell}}_{\vec{A}_{\ell}}
\le
1+q.
\end{equation*}
Moreover,~\cite[Lemma 15]{hmp26} states that \(\vec{P}_{\ell}^{\SMG}\) is SPD.
With the arguments from the proof of Lemma~\ref{lemma:additive_schwarz}, this concludes the proof.
\end{proof}

\section{AFEM with a-posteriori-steered parameter control} \label{section:afem_algorithm}

In this section, we present a new adaptive algorithm that employs PGMRES from Algorithm~\ref{algorithm:unconditional_afem_gmres} as its algebraic solver.
First, we recall a result from~\cite{cfpp2014} establishing the equivalence between tail summability, inverse summability, and R-linear convergence for any nonnegative sequence of scalars.
This result is central both to the design of the algorithm and to the proof of unconditional full R-linear convergence in Section~\ref{section:unconditional_full_R_linear_convergence}.

\begin{lemma}[summability and R-linear convergence]\label{lemma:summability}
Let $(a_{\ell})_{\ell \in \N_0}$ be a sequence in $\R_{\geq 0}$ and $\beta, \gamma > 0$.
Then, the following statements~\eqref{lemma:i:tail_summaility}--\eqref{lemma:iii:linear_conv} are pairwise equivalent
\begin{enumerate}[label=(\roman*), ref = \textnormal{\roman*}, font = \upshape]
	\item \label{lemma:i:tail_summaility} \textbf{tail summability:} There exists a constant $C_{\beta} > 0$, such that
	\begin{equation}\label{eq:tail_summability_one}
		\sum_{\ell' = \ell+1}^{\infty} a^{\beta}_{\ell'} \leq C_{\beta} \, a^{\beta}_{\ell} \quad \text{for all } \ell \in \N_0.
	\end{equation}
	\item \label{lemma:ii:inverse_tail_summability} \textbf{inverse summability:} There exists a constant $C_{\gamma} > 0$, such that 
	\begin{equation}\label{eq:inverse_summability}
		\sum_{\ell' = 0}^{\ell-1} a^{-\gamma}_{\ell'} \leq C_{\gamma} \, a^{-\gamma}_{\ell} \quad \text{for all } \ell \in \N_0.
	\end{equation}
	\item \label{lemma:iii:linear_conv} \textbf{R-linear convergence:} There exist constants $\widetilde{C}_{\textup{lin}} > 1$ and $0 < \widetilde{q}_{\textup{lin}} <1$ with
\begin{equation}\label{eq:R_linear_conv}
    a_{\ell} \leq \widetilde{C}_{\textup{lin}} \, \widetilde{q}_{\textup{lin}}^{\ell-\ell'} a_{\ell'} \quad \text{for all } 0 \leq \ell' \leq \ell.
\end{equation}
\end{enumerate}
Let $(a_{\ell})_{\ell \in \N_0}$  have the additional property
\begin{equation}\label{eq:add_prop_summability}
	\forall \ell \in \N_0 : \Big(a_{\ell} = 0 \; \Longrightarrow \; a_{\ell'}=0 \quad \text{for all } \ell' > \ell\Big).
\end{equation}
Let $\ell_0 \in \N$.
If any of the statements~\eqref{lemma:i:tail_summaility}--\eqref{lemma:iii:linear_conv} hold only for $\ell \geq \ell_0$, then they already hold for all $\ell \in \N_0$ with potentially larger constants depending also on $(a_{\ell'})_{\ell'=0}^{\ell_0}$.
\end{lemma}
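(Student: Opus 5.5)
The plan is to prove the cycle of implications (iii)$\Rightarrow$(i), (iii)$\Rightarrow$(ii), (i)$\Rightarrow$(iii), (ii)$\Rightarrow$(iii), and then treat the extension from $\ell \ge \ell_0$ to all $\ell$ separately. For (iii)$\Rightarrow$(i), raise \eqref{eq:R_linear_conv} to the power $\beta$ and sum the geometric series:
\[
\sum_{\ell'=\ell+1}^\infty a_{\ell'}^\beta \le \widetilde C_{\rm lin}^\beta \sum_{k\ge1}\widetilde q_{\rm lin}^{\beta k} a_\ell^\beta ,
\]
so $C_\beta = \widetilde C_{\rm lin}^\beta \widetilde q_{\rm lin}^\beta/(1-\widetilde q_{\rm lin}^\beta)$ works. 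For (iii)$\Rightarrow$(ii), \eqref{eq:R_linear_conv} gives $a_{\ell'}^{-\gamma}\le \widetilde C_{\rm lin}^\gamma \widetilde q_{\rm lin}^{\gamma(\ell-\ell')} a_\ell^{-\gamma}$ for $\ell'<\ell$, and summing again gives a geometric bound. Here I use the convention that $a_\ell=0$ is handled via $a^{-\gamma}=\infty$. Alternatively, I note that if $a_{\ell'}=0$ then (iii) forces $a_\ell=0$, so the inequality is trivially true with $\infty$ on the right.

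For (i)$\Rightarrow$(iii), I will use the standard tail-sum trick. Set $s_\ell := \sum_{\ell'\ge\ell} a_{\ell'}^\beta$. Then (i) gives
\[
s_{\ell+1}\le C_\beta a_\ell^\beta = C_\beta(s_\ell - s_{\ell+1}),
\]
hence $s_{\ell+1}\le q\,s_\ell$ with $q := C_\beta/(1+C_\beta)<1$. Iterating and using $a_\ell^\beta\le s_\ell$ and $s_{\ell'}\le(1+C_\beta)a_{\ell'}^\beta$ yields
\[
a_\ell^\beta\le(1+C_\beta)\,q^{\ell-\ell'}a_{\ell'}^\beta .
\]
Taking $\beta$-th roots gives (iii) with $\widetilde q_{\rm lin} = q^{1/\beta}$; enlarging $\widetilde C_{\rm lin}$ if needed ensures $\widetilde C_{\rm lin}>1$.

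The case (ii)$\Rightarrow$(iii) is symmetric. Set $t_\ell := \sum_{\ell'\le\ell} a_{\ell'}^{-\gamma}$, restricted to indices where $a_\ell>0$; if $a_\ell=0$, then (iii) is trivial at the later indices. Condition (ii) gives $t_{\ell-1}\le C_\gamma(t_\ell-t_{\ell-1})$, so $t_{\ell-1}\le \frac{C_\gamma}{1+C_\gamma}t_\ell$. Consequently
\[
a_{\ell'}^{-\gamma}\le t_{\ell'}\le q^{\ell-\ell'}t_\ell\le q^{\ell-\ell'}(1+C_\gamma)a_\ell^{-\gamma},
\]
which after inversion and taking $\gamma$-th roots is (iii).

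For the final statement, I will prove (iii) for all indices and then transfer the result to (i) and (ii) through the equivalences. This step is the main obstacle, because of zeros. Suppose \eqref{eq:R_linear_conv} holds for $\ell_0\le\ell'\le\ell$. The only new case is $\ell'<\ell_0$. I will split on whether $a_{\ell'}>0$ for all $\ell' \le \ell_0$. If so, write
\[
a_\ell\le \widetilde C_{\rm lin}\widetilde q_{\rm lin}^{\ell-\ell_0}a_{\ell_0} = \widetilde C_{\rm lin}\widetilde q_{\rm lin}^{\ell-\ell'}\bigl(\widetilde q_{\rm lin}^{\ell'-\ell_0}a_{\ell_0}/a_{\ell'}\bigr)a_{\ell'} ,
\]
and bound the bracket by $\widetilde q_{\rm lin}^{-\ell_0}\max_{\ell'\le\ell_0}a_{\ell_0}/a_{\ell'}$. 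For $\ell\le\ell_0$, I will similarly use finitely many ratios $a_\ell/a_{\ell'}$. If instead some $a_{\ell'}=0$ with $\ell'\le\ell_0$, then \eqref{eq:add_prop_summability} forces all later terms to vanish. The inequality then holds trivially whenever the right-hand side involves such a zero index or a later one, and only finitely many positive ratios remain to absorb into the constant. Together these cases give constants depending on $(a_{\ell'})_{\ell'=0}^{\ell_0}$, as claimed.
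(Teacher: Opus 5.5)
Your proof is correct, but it is organized differently from the paper's.

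\textbf{The paper's route.} The paper does not reprove the equivalence of (i)--(iii). It cites \cite[Lemma~4.9]{cfpp2014}, stated there without exponents, and \cite[Lemma~2]{bfmps2025}. It then carries out the extension only for tail summability, directly. For $\ell<\ell_0$ it splits $\sum_{\ell'>\ell}a_{\ell'}^\beta$ into two parts:
\begin{itemize}
\item the tail beyond $\ell_0$, bounded by $C_\beta a_{\ell_0}^\beta$ using (i) at the single index $\ell_0$;
\item a finite prefix sum.
\end{itemize}
It then divides by $\min\{a_{\ell'}^\beta : \ell'<\ell_0,\ a_{\ell'}\neq 0\}$, invoking \eqref{eq:add_prop_summability} when $a_\ell=0$. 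The cases (ii) and (iii) are left as ``similar''.

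\textbf{Your route.} You prove all four implications through the contraction of the partial sums $s_\ell$ (resp.\ $t_\ell$). This supplies the general-exponent ($\beta$, $\gamma$) version that the paper only asserts. You then perform the extension once, at the level of (iii), and transfer it back to (i) and (ii) through the equivalences. This gives a self-contained argument with a single zero-case analysis. The paper's version is shorter and makes the dependence of the constant on $(a_{\ell'})_{\ell'=0}^{\ell_0}$ completely explicit, but it outsources the equivalences and the (ii)/(iii) extensions.

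\textbf{Two steps to make explicit.}
\begin{itemize}
\item To start from (i) or (ii) on $\ell\ge\ell_0$, apply your equivalence to the shifted sequence $(a_{\ell_0+j})_{j\in\N_0}$. For (ii) this works because the sum from $0$ dominates the sum from $\ell_0$. This yields \eqref{eq:R_linear_conv} for $\ell_0\le\ell'\le\ell$, which is where your extension argument begins.
\item In (ii)$\Rightarrow$(iii), use the convention $0^{-\gamma}=\infty$. Then (ii) itself forces $a_{\ell''}=0$ for all $\ell''>\ell$ once $a_\ell=0$, and this is exactly what makes (iii) trivial at the later indices.
\end{itemize}
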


\begin{proof}
The equivalence of statements~\eqref{lemma:i:tail_summaility}--\eqref{lemma:iii:linear_conv} was proved in~\cite[Lemma~4.9]{cfpp2014}, albeit without the exponent in the tail summability condition; see also~\cite[Lemma~2]{bfmps2025}.
Let \((a_{\ell})_{\ell \in \N_0}\) satisfy~\eqref{eq:add_prop_summability}. We show that tail summability~\eqref{eq:tail_summability_one} for all \(\ell \ge \ell_0\) implies tail summability~\eqref{eq:tail_summability_one} for all \(\ell \in \N_0\). The corresponding implications for~\eqref{lemma:ii:inverse_tail_summability} and~\eqref{lemma:iii:linear_conv} follow similarly.
To this end, let \(\ell < \ell_0\) and define \(N \coloneqq \set{0 \le \ell' < \ell_0 \colon a_{\ell'} \neq 0}.\)
Moreover, set
\begin{equation*}
    \widetilde{C}_m \coloneqq \frac{C_m a_{\ell_0}^m + \sum_{\ell'=1}^{\ell_0} a_{\ell'}^m}{\min_{\ell' \in N} a_{\ell'}^m}.
\end{equation*}
Then, using~\eqref{eq:tail_summability_one} for \(\ell_0\) and the assumption~\eqref{eq:add_prop_summability} in the case $a_{\ell}=0$, we obtain
\begin{equation*}
    \sum_{\ell'=\ell+1}^{\infty} a_{\ell'}^m = \sum_{\ell'=\ell_0+1}^{\infty} a_{\ell'}^m + \sum_{\ell'=\ell+1}^{\ell_0} a_{\ell'}^m 
    \eqreff{eq:tail_summability_one}\le C_m a_{\ell_0}^m + \sum_{\ell'=\ell+1}^{\ell_0} a_{\ell'}^m 
    \eqreff{eq:add_prop_summability}\le \widetilde{C}_m \, a_{\ell}^m.
\end{equation*}
This concludes the proof.
\end{proof}

Given an approximation 
\( u_{\ell}  \in \XX_{\ell} \) of the solution \( u_{\ell} ^\star \in \XX_{\ell}  \) 
to \eqref{eq:discrete_formulation} and its vector representation $u_{\ell} = \sum_{j=1}^{N_{\ell}} (\vec{x}_{\ell})_j \varphi_{\ell,j}$, Algorithm~\ref{alg:optimally_preconditioned_gmres} returns an improved 
approximation $\vec{y}_{\ell} \in \R^{N_{\ell}}$.
This gives rise to the map
\(\Psi_{\ell} \colon \XX_{\ell}  \to \XX_{\ell} \), $\Psi_{\ell}(u_{\ell}) \coloneqq \sum_{j=1}^{N_{\ell}} (\vec{y}_{\ell})_j \, \varphi_{\ell,j}$.
In the following algorithm, the iterates satisfy $u_{\ell}^k = \sum_{j=1}^{N_{\ell}} (\vec{x}_{\ell}^k)_j \varphi_{\ell,j}$ and $\vec{s}_{\ell}^k = \vec{P}_{\ell} (\vec{d}_{\ell}-\vec{B}_{\ell}\vec{x}_{\ell}^k)$, where $\vec{x}_{\ell}^k$ are the iterates generated by Algorithm~\ref{alg:optimally_preconditioned_gmres} with initial guess $\vec{x}_{\ell}^0$. 

\begin{algorithm}[unconditional AFEM with optimal PGMRES solver] \label{algorithm:unconditional_afem_gmres}
\textbf{Input:} Initial mesh $\mathcal{T}_0$, adaptivity parameters $0 < \theta \leq 1$ and $\Cmark \geq 1$, initial guess $u_0^0 \in \XX_0$, as well as $\Calg, \lalg > 0$, ${\sf S} \coloneqq 0$, $k_{\max}$, and  tolerance $\tau \geq 0$.  \\
Perform the following steps~\eqref{alg2:single_i}--\eqref{alg2:single_iv} for all $\ell=0,1,2,\dots$:
\begin{enumerate}[label=(\roman*), ref = \textnormal{\roman*}, font = \upshape]
  \item\label{alg2:single_i} \textbf{Solve \& Estimate:} Compute $u_\ell^{\underline{k}[\ell]} \coloneqq \Psi_\ell(u_\ell^{0})$ via the optimally preconditioned GMRES method from Algorithm~\ref{alg:optimally_preconditioned_gmres} using \(\Lambda (\boldsymbol{x}_\ell^{k}) \coloneqq \lalg \, \eta_\ell(u_\ell^{k})\) and $k_{\max}$ for $k \leq \underline{k}[\ell]$, where $\underline{k}[\ell] \in \N$ denotes the number of PGMRES iterations on level $\ell$.
  \item\label{alg2:single:ii} \textbf{Adaptive Parameter Control:} If $\eta_{\ell}(u_{\ell}^{\underline{k}})+ \norm{\vec{s}_{\ell}^{\underline{k}}}_{\vec{P}^{-1}_{\ell}} \leq \tau$, then stop Algorithm~\ref{algorithm:unconditional_afem_gmres} and terminate; else do:
  \begin{enumerate}[label=(\alph*), ref = \textnormal{ii.\alph*}, font = \upshape]
    \item \label{alg2:ii:a} If $\ell \geq 1$ and ${\sf S} > \Calg (\eta_{\ell}(u_{\ell}^{\underline{k}})+ \norm{\vec{s}_{\ell}^{\underline{k}}}_{\vec{P}^{-1}_{\ell}})^{-1}$, then $\Calg \mapsfrom 2 \Calg$, $\lalg \mapsfrom \lalg/2$.
    \item \label{alg2:ii:b} Always update ${\sf S} \mapsfrom {\sf S} + (\eta_{\ell}(u_{\ell}^{\underline{k}})+ \norm{\vec{s}_{\ell}^{\underline{k}}}_{\vec{P}^{-1}_{\ell}})^{-1}$.
  \end{enumerate}
  \item \textbf{Mark:} Determine a set $\mathcal{M}_\ell \in \mathbb{M}_\ell[\theta, u_\ell^{\underline{k}}] \coloneqq 
	\{\mathcal{U}_\ell \subseteq \mathcal{T}_\ell
	\,:\, \theta \eta_\ell(u_\ell^{\underline{k}})^2 
	\le \eta_\ell(\mathcal{U}_\ell,
	u_\ell^{\underline{k}})^2\}$ satisfying the D\"orfler marking criterion 
	\begin{equation}\label{eq:doerfler}
	\#\mathcal{M}_\ell \le C_{\textup{mark}} \!\! 
	\min_{\mathcal{U}_\ell \in 
	\mathbb{M}_\ell[\theta,u_\ell^{\underline{k}}]}\!\! 
	\#\mathcal{U}_\ell.
	\end{equation}
  \item\label{alg2:single_iv} \textbf{Refine:} Generate $\mathcal{T}_{\ell+1} \coloneqq \mathtt{refine}(\mathcal{T}_\ell,\mathcal{M}_\ell)$ and employ nested iteration $u_{\ell+1}^0 \coloneqq u_\ell^{\underline{k}}$.
\end{enumerate}
\textbf{Output:} Sequence of meshes $\{\mathcal{T}_\ell\}_{\ell \in \N_0}$ and discrete solutions
$\{u_\ell^{\underline{k}}\}_{\ell \in \N_0}$.
\end{algorithm}

Note that Algorithm~\ref{algorithm:unconditional_afem_gmres} consists of two nested loops, where we denote the discretization with the index $\ell$ and the iterations of the algebraic solver, which in this case is PGMRES, with $k$. 
More precisely, the function $u_{\ell}^k$ for $k \leq \underline{k}[\ell]$ denotes the $k$-th approximation of $u_{\ell}^{\star}$ calculated by Algorithm~\ref{alg:optimally_preconditioned_gmres} on the discretization level $\ell$ with initial guess $u_{\ell}^0$.
The sequential nature of Algorithm~\ref{algorithm:unconditional_afem_gmres} 
results in the countably infinite index set
\begin{equation*}
	\mathcal{Q} \coloneqq \bigl\{(\ell,k) \in \mathbb{N}_0^2 
	\colon u_\ell^k \in \mathcal{X}_\ell 
	\text{ appears in Algorithm~\ref{algorithm:unconditional_afem_gmres}} \bigr\}
\end{equation*}
equipped with the lexicographic ordering
\begin{equation*}
	(\ell',k') \le (\ell,k)
	\quad :\Longleftrightarrow \quad
	\text{$u_{\ell'}^{k'}$ is defined not later than $u_\ell^k$ in
		Algorithm~\ref{algorithm:unconditional_afem_gmres}}
\end{equation*}
and the total step counter
\begin{equation*}
	| \ell, k | \coloneqq \# \bigl \{(\ell', k') 
	\in \mathcal{Q} \,:\, (\ell', k') \le
		(\ell, k) \bigr \} \in \mathbb{N}_0
	\quad \text{for all \((\ell, k) \in \mathcal{Q}\).}
\end{equation*}
Moreover, we define the stopping indices
\begin{equation}
	\begin{aligned}
		\underline{\ell}    
		& \coloneqq \sup \{\ell \in \mathbb{N}_0 \,:\, 
		(\ell,0) \in \mathcal{Q}\} \in \mathbb{N}_0 \cup \{\infty\},
		\\
		\underline{k}[\ell] & \coloneqq \sup \{k \in \mathbb{N}_0 \,:\, 
		(\ell,k) \in \mathcal{Q}\} \in \mathbb{N} \cup \{\infty\},
		\quad \text{whenever } (\ell,0) \in \mathcal{Q}.
	\end{aligned}
\end{equation}
We stress that the definition of $k[\ell]$ is consistent with that of 
Algorithm~\ref{algorithm:unconditional_afem_gmres}\eqref{alg2:single:ii}.

\begin{remark}[adaptive parameter control]\label{remark:afem_plus_gmres}
Let us define the computable quasi-error
\begin{equation}\label{eq:computable_quasierror}
	{\sf H}_\ell^k \coloneqq
	\eta_\ell(u_\ell^k) + \norm{\vec{s}^k_{\ell}}_{\boldsymbol{P}^{-1}_\ell}
	\quad \text{for all} \quad (\ell,k) \in \QQ
\end{equation}
and note that ${\sf S} = \sum_{j=0}^\ell ({\sf H}_j^{\underline{k}})^{-1}$ in Algorithm~\ref{algorithm:unconditional_afem_gmres}\eqref{alg2:ii:b}.
Comparing Algorithm~\ref{algorithm:unconditional_afem_gmres} to standard adaptive algorithms (see, e.g.,~\cite[Algorithm~B]{bfmps2025}), we observe that the difference lies in the adaptive parameter control performed in step~\eqref{alg2:single:ii}.
This new a-posteriori-steered criterion aims to algorithmically guarantee R-linear convergence~\eqref{eq:R_linear_conv} of ${\sf H}_{\ell}^{\underline{k}}$ across the levels $\ell$ by testing the equivalent inverse summability condition~\eqref{eq:inverse_summability} for $\gamma=1$.
\end{remark} 

The following lemma provides a-posteriori error control of the total error.

\begin{lemma}[a-posteriori error control]\label{lemma:aposteriori}
Suppose that the estimator satisfies
	\eqref{axiom:stability}--\eqref{axiom:discrete_reliability} and that the employed preconditioner satisfies~\eqref{eq:equivalence_preconditioner_energy_norm}.
    Recall the computable quasi-error ${\sf H}_{\ell}^k$ from~\eqref{eq:computable_quasierror}.
With the constants $\Cstab, \Cell,\Cbnd$, and $C_{\textup{pre}}$ from~\eqref{axiom:stability},~\eqref{eq:coercivity_continuity}, and~\eqref{eq:equivalence_preconditioner_energy_norm}, define \(C_{\textup{eq}} \coloneqq \max \{1, (1+ C_{\textup{stab}}) \,
    C_{\textup{ell}}^{-1} \, C_{\textup{pre}}, (C_{\textup{bnd}} \, C_{\textup{pre}}+ C_{\textup{stab}})  \} > 0\).
		Then, there holds 
\begin{equation}\label{eq:apost_quasierror}
	C^{-1}_{\textup{eq}} \big( \eta_\ell(u_\ell^\star) + \enorm{u_\ell^\star - u_\ell^k} \big)
	\le
	{\sf H}_\ell^k 
	\le
	C_{\textup{eq}} \big( \eta_\ell(u_\ell^\star) + \enorm{u_\ell^\star - u_\ell^k} \big)
	\quad \text{for all} \ (\ell,k) \in \mathcal{Q}.
\end{equation}
With \( C'_{\textup{rel}} \coloneqq C_{\textup{eq}} \max\{C_{\textup{rel}},1\} >0\), this yields
	\begin{equation}\label{eq:apost_totalerror}
	\enorm{u^\star - u_\ell^k} 
	\le
    C'_{\textup{rel}} \, {\sf H}_\ell^k, \quad \text{for all} \ (\ell,k) \in \mathcal{Q}.
	\end{equation}
\end{lemma}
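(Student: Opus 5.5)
The plan is to combine stability \eqref{axiom:stability} (applied with $\TT_h = \TT_H = \TT_\ell$, so that $\TT_h \cap \TT_H = \TT_\ell$) with the residual equivalence of Lemma~\ref{lem:equivalence_residual_norm}. Applying \eqref{axiom:stability} to $v_h = u_\ell^k$ and $v_H = u_\ell^\star$ on the same mesh gives
\begin{equation*}
|\eta_\ell(u_\ell^k) - \eta_\ell(u_\ell^\star)| \le \Cstab \, \enorm{u_\ell^\star - u_\ell^k}.
\end{equation*}
Lemma~\ref{lem:equivalence_residual_norm} gives $\Cell C_{\textup{pre}}^{-1}\enorm{u_\ell^\star-u_\ell^k} \le \norm{\vec{s}_\ell^k}_{\vec{P}_\ell^{-1}} \le \Cbnd C_{\textup{pre}} \enorm{u_\ell^\star-u_\ell^k}$.

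For the upper bound in \eqref{eq:apost_quasierror}, I will estimate
\begin{equation*}
{\sf H}_\ell^k \le \eta_\ell(u_\ell^\star) + \Cstab\enorm{u_\ell^\star-u_\ell^k} + \Cbnd C_{\textup{pre}}\enorm{u_\ell^\star-u_\ell^k}.
\end{equation*}
This is bounded by $\max\{1,\Cbnd C_{\textup{pre}}+\Cstab\}$ times the right-hand side, which in turn is at most $C_{\textup{eq}}$ times it.

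For the lower bound, I will write
\begin{equation*}
\eta_\ell(u_\ell^\star) + \enorm{u_\ell^\star-u_\ell^k} \le \eta_\ell(u_\ell^k) + (1+\Cstab)\enorm{u_\ell^\star-u_\ell^k}.
\end{equation*}
Then I bound the error term by $(1+\Cstab)\Cell^{-1}C_{\textup{pre}}\norm{\vec{s}_\ell^k}_{\vec{P}_\ell^{-1}}$. This gives a factor $\max\{1,(1+\Cstab)\Cell^{-1}C_{\textup{pre}}\}\le C_{\textup{eq}}$ in front of ${\sf H}_\ell^k$.

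For \eqref{eq:apost_totalerror}, I will use the triangle inequality $\enorm{u^\star-u_\ell^k}\le \enorm{u^\star-u_\ell^\star}+\enorm{u_\ell^\star-u_\ell^k}$. Reliability \eqref{axiom:reliability} bounds the first term by $C_{\textup{rel}}\eta_\ell(u_\ell^\star)$. The whole sum is therefore at most $\max\{C_{\textup{rel}},1\}(\eta_\ell(u_\ell^\star)+\enorm{u_\ell^\star-u_\ell^k})$, and the lower bound of \eqref{eq:apost_quasierror} concludes.

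There is no real obstacle. The only point needing care is that \eqref{axiom:stability} may be applied with $\TT_h=\TT_H$; this is allowed since $\TT_\ell\in\T(\TT_\ell)$ (zero bisections). Otherwise the proof is just tracking constants to match $C_{\textup{eq}}$.
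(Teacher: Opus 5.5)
Your proposal is correct and follows the paper's proof essentially line by line. Stability \eqref{axiom:stability} on the fixed mesh, combined with the residual equivalence \eqref{eq:equivalence_residual_norm}, gives both bounds in \eqref{eq:apost_quasierror} with exactly the constants collected in $C_{\textup{eq}}$, while the triangle inequality, reliability \eqref{axiom:reliability}, and the lower bound yield \eqref{eq:apost_totalerror}; your remark that \eqref{axiom:stability} may be applied with $\TT_h = \TT_H = \TT_\ell$ makes explicit a step the paper uses tacitly.
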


\begin{proof}
We start with the lower bound 
in~\eqref{eq:apost_quasierror}. Stability~\eqref{axiom:stability} and equivalence~\eqref{eq:equivalence_residual_norm} yield
\begin{align*}
		\eta_\ell(u_\ell^\star) + \enorm{u_\ell^\star - u_\ell^k}
		&\eqreff*{axiom:stability}\le
		\eta_\ell(u_\ell^k) + (1+ C_{\textup{stab}}) \,
		\enorm{u_\ell^\star - u_\ell^k} \\
		&\eqreff*{eq:equivalence_residual_norm}\le
		\eta_\ell(u_\ell^k) + (1+ C_{\textup{stab}}) \,
    C_{\textup{ell}}^{-1} \, C_{\textup{pre}} \,
		\norm{r^k_{\ell}}_{\boldsymbol{P}^{-1}_\ell} 
		\le C_{\textup{eq}}  {\sf H}_\ell^k.
\end{align*}
Similarly, for the upper bound in \eqref{eq:apost_quasierror},
equivalence~\eqref{eq:equivalence_residual_norm} and
stability~\eqref{axiom:stability} imply
\begin{align*}
		{\sf H}_\ell^k
		=
		\eta_\ell(u_\ell^k) &+ \norm{r^k_{\ell}}_{\boldsymbol{P}^{-1}_\ell} 
		\eqreff*{eq:equivalence_residual_norm}\le
		\eta_\ell(u_\ell^k) + 
    C_{\textup{bnd}} \, C_{\textup{pre}}\,
		\enorm{u_\ell^\star - u_\ell^k} \\
		&\eqreff*{axiom:stability}\le
    \eta_\ell(u_\ell^\star) + (C_{\textup{bnd}} \, C_{\textup{pre}}+ C_{\textup{stab}}) \,
		\enorm{u_\ell^\star - u_\ell^k} 
		\le  C_{\textup{eq}} \big(\eta_\ell(u_\ell^\star) + \enorm{u_\ell^\star - u_\ell^k} \big).
\end{align*}
To show \eqref{eq:apost_totalerror}, we use reliability~\eqref{axiom:reliability} and the established equivalence~\eqref{eq:apost_quasierror} to see that
    \begin{equation}\label{eq:error_quasi_error_bound}
	\enorm{u^\star- u_\ell^k} 
	\le
    \enorm{u^\star - u_\ell^\star} +
    \enorm{u_\ell^\star - u_\ell^k} 
	\eqreff{axiom:reliability}\le
    C_{\textup{rel}} \, \eta_\ell(u_\ell^\star) 
	+ \enorm{u_\ell^\star - u_\ell^k} 
	\eqreff{eq:apost_quasierror}\le C'_{\textup{rel}} \,{\sf H}_\ell^k. 
	\end{equation}
	This concludes the proof.
\end{proof}


\section{Unconditional full R-linear convergence of Algorithm~\ref{algorithm:unconditional_afem_gmres}} \label{section:unconditional_full_R_linear_convergence}

To prove unconditional full R-linear convergence of Algorithm~\ref{algorithm:unconditional_afem_gmres}, we first show conditional tail summability of ${\sf H}_{\ell}^{\underline{k}}$ in Lemma~\ref{theorem:full_linear_convergence}.
This combined with the new adaptive parameter criterion of Algorithm~\ref{algorithm:unconditional_afem_gmres}\eqref{alg2:ii:a} implies that $\Calg$ and $\lalg$ are only updated finitely often; see Lemma~\ref{lemma:bounded_updates}.
Then, Theorem~\ref{theorem:unconditional_full_R_linear_convergence} uses this result to derive unconditional full R-linear convergence of the quasi-error ${\sf H}_{\ell}^k$, i.e., with no additional assumptions on the input parameters $\theta, \Calg, \lalg$, and $k_{\max}$.
The following lemma establishes estimator equivalence in the spirit of~\cite[Lemma 4.9]{ghps2018} that enables to transfer the Dörfler criterion from the exact discrete solution to the inexact discrete solution, and vice-versa.

\begin{lemma}[estimator equivalence]\label{lem:estimator_equivalence}
	Suppose that the estimator satisfies \eqref{axiom:stability} and that the employed preconditioner satisfies~\eqref{eq:equivalence_preconditioner_energy_norm}.
  Let $\lalg^\star \coloneqq \min\{1, \Cstab^{-1} C_{\textup{solve}}^{-1}\}$ with $C_{\textup{solve}} \coloneqq C_{\textup{ell}}^{-1} \, C_{\textup{pre}}$.
	Then, for all
	\(0 < \theta \le 1\) and all
	\(
		0
		<
		\lalg
		<
		\lalg^\star
	\), 
	there holds the equivalence
	\begin{equation}\label{eq:estimator_equivalence}
		\bigl[
			1- \lalg/\lalg^\star
		\bigr] \,
		\eta_{\ell}(u_\ell^{\underline{k} })
		\le
		\eta_{\ell}(u_\ell^\star)
		\le
		\bigl[
			1 + \lalg/\lalg^\star
		\bigr] \,
		\eta_{\ell}(u_\ell^{\underline{k} }).
	\end{equation}
	Let 
	\begin{equation}\label{eq:thetamark}
	\theta' \coloneqq \frac{(\theta^{1/2}-\lalg/\lalg^\star)^2}{(1+\lalg/\lalg^\star)^2} \quad \text{and} \quad \theta_{\textup{mark}} \coloneqq \frac{(\theta^{1/2}+ \lalg/\lalg^\star)^2}{(1 - \lalg/\lalg^\star)^2}.
	\end{equation}
	Then, Dörfler marking for $(u_{\ell}^{\underline{k}},\theta)$ implies Dörfler marking for $(u_{\ell}^\star, \theta')$, i.e., for any $\mathcal{R}_{\ell} \subseteq \TT_{\ell}$, there holds the following implication:
	\begin{equation}\label{eq:equivalence_Doerfler_reverse}
		\theta \, \eta_{\ell}(u_\ell^{\underline{k} })^2
		\le
		\eta_{\ell}(\mathcal{R}_{\ell}; u_\ell^{\underline{k} })^2
		\quad
		\Longrightarrow
		\quad
		\theta' \, \eta_{\ell}(u_\ell^\star)^2
		\le
		\eta_{\ell}(\mathcal{R}_{\ell}; u_\ell^\star)^2.
	\end{equation}
	Conversely, for any \(\mathcal{R}_{\ell} \subseteq \TT_{\ell}\), there also holds the 
	following implication:
	\begin{equation}\label{eq:equivalence_Doerfler}
		\theta_{\textup{mark}} \, \eta_{\ell}(u_\ell^\star)^2
		\le
		\eta_{\ell}(\mathcal{R}_{\ell}; u_\ell^\star)^2
		\quad
		\Longrightarrow
		\quad
		\theta \, \eta_{\ell}(u_\ell^{\underline{k} })^2
		\le
		\eta_{\ell}(\mathcal{R}_{\ell}; u_\ell^{\underline{k} })^2.
	\end{equation}
\end{lemma}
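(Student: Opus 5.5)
The plan is to first bound the estimator difference by the solver termination criterion, and then transfer the Dörfler condition with triangle inequalities.

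\textbf{Step 1: estimator difference.} Apply stability \eqref{axiom:stability} with $\TT_h = \TT_H = \TT_\ell$, $v_h = u_\ell^\star$, $v_H = u_\ell^{\underline{k}}$. This works for the full mesh and, equally, for any subset $\mathcal{R}_\ell \subseteq \TT_\ell$, since stability is local on elements. Combining it with the lower bound in Lemma~\ref{lem:equivalence_residual_norm} and the termination criterion \eqref{eq:single:termination}, with $\Lambda = \lalg\,\eta_\ell(u_\ell^{\underline{k}})$ from Algorithm~\ref{algorithm:unconditional_afem_gmres}, gives
\begin{equation*}
|\eta_\ell(\mathcal{R}_\ell,u_\ell^\star) - \eta_\ell(\mathcal{R}_\ell,u_\ell^{\underline{k}})|
\le \Cstab \enorm{u_\ell^\star - u_\ell^{\underline{k}}}
\le \Cstab C_{\textup{solve}} \norm{\vec{s}_\ell^{\underline{k}}}_{\vec{P}^{-1}_\ell}
\le \Cstab C_{\textup{solve}} \lalg \, \eta_\ell(u_\ell^{\underline{k}}).
\end{equation*}
Since $\Cstab C_{\textup{solve}} \le 1/\lalg^\star$, the right-hand side is at most $(\lalg/\lalg^\star)\,\eta_\ell(u_\ell^{\underline{k}})$. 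Taking $\mathcal{R}_\ell = \TT_\ell$ immediately yields \eqref{eq:estimator_equivalence}, and $\lalg < \lalg^\star$ makes the lower factor positive. Here I assume $\underline{k}[\ell] < \infty$, i.e., the solver terminated; this holds since GMRES contracts (Proposition~\ref{lem:contraction_gmres}) unless $\eta_\ell = 0$ at the limit. That degenerate case should be addressed, or simply taken as given because the lemma refers to $u_\ell^{\underline{k}}$.

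\textbf{Step 2: implication \eqref{eq:equivalence_Doerfler_reverse}.} Abbreviate $\mu \coloneqq \lalg/\lalg^\star$. Assume $\theta^{1/2}\eta_\ell(u_\ell^{\underline{k}}) \le \eta_\ell(\mathcal{R}_\ell,u_\ell^{\underline{k}})$. Step 1 on $\mathcal{R}_\ell$ gives
\begin{equation*}
\eta_\ell(\mathcal{R}_\ell,u_\ell^\star) \ge (\theta^{1/2}-\mu)\,\eta_\ell(u_\ell^{\underline{k}}) \ge \frac{\theta^{1/2}-\mu}{1+\mu}\,\eta_\ell(u_\ell^\star),
\end{equation*}
using the upper bound of \eqref{eq:estimator_equivalence}. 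If $\theta^{1/2} \ge \mu$, squaring gives the claim with $\theta'$. If $\theta^{1/2} < \mu$, the claimed inequality is still formally of this squared form. One could remark that $\theta'$ is only meaningful for $\theta^{1/2}>\mu$; since this case is not excluded in the statement, I would note it and proceed with the nontrivial case.

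\textbf{Step 3: implication \eqref{eq:equivalence_Doerfler}.} Symmetrically, assume $\theta_{\textup{mark}}^{1/2}\eta_\ell(u_\ell^\star)\le \eta_\ell(\mathcal{R}_\ell,u_\ell^\star)$. Then
\begin{equation*}
\eta_\ell(\mathcal{R}_\ell,u_\ell^{\underline{k}}) \ge \eta_\ell(\mathcal{R}_\ell,u_\ell^\star) - \mu\,\eta_\ell(u_\ell^{\underline{k}})
\ge \theta_{\textup{mark}}^{1/2}(1-\mu)\,\eta_\ell(u_\ell^{\underline{k}}) - \mu\,\eta_\ell(u_\ell^{\underline{k}}),
\end{equation*}
where the second inequality uses the lower bound of \eqref{eq:estimator_equivalence}. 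Since $\theta_{\textup{mark}}^{1/2}(1-\mu) = \theta^{1/2}+\mu$, the right-hand side equals $\theta^{1/2}\eta_\ell(u_\ell^{\underline{k}})$, and squaring finishes the proof.

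\textbf{Main obstacle.} The main care is in Step 1. The subset-version of stability must be used correctly, and the termination criterion must bound the algebraic error by the estimator at the same iterate $u_\ell^{\underline{k}}$. This is exactly what makes the constants close with $\lalg^\star$.
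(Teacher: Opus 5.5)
Your proof is correct and follows the paper's argument essentially step for step. Lemma~\ref{lem:equivalence_residual_norm} and the stopping criterion \eqref{eq:single:termination} bound $\Cstab\,\enorm{u_\ell^\star-u_\ell^{\underline{k}}}$ by $(\lalg/\lalg^\star)\,\eta_\ell(u_\ell^{\underline{k}})$, stability on subsets yields \eqref{eq:estimator_equivalence}, and both D\"orfler transfers are the same triangle-inequality manipulations, including the identity $\theta_{\textup{mark}}^{1/2}(1-\lalg/\lalg^\star)=\theta^{1/2}+\lalg/\lalg^\star$.

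On your caveat in Step~2: the paper also tacitly assumes $\theta^{1/2}\ge\lalg/\lalg^\star$ when it writes $(\theta')^{1/2}=(\theta^{1/2}-\lalg/\lalg^\star)/(1+\lalg/\lalg^\star)$. For $\theta^{1/2}<\lalg/\lalg^\star$, the implication \eqref{eq:equivalence_Doerfler_reverse} does not follow from stability at all, not even ``formally'': stability then permits $\eta_\ell(\mathcal{R}_\ell;u_\ell^\star)=0$ for a D\"orfler set $\mathcal{R}_\ell$ of $u_\ell^{\underline{k}}$. The right fix is therefore to restrict to $\lalg<\lalg^\star\theta^{1/2}$, which is exactly what Lemma~\ref{theorem:full_linear_convergence} imposes where this implication is used.
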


\begin{proof}
	Norm equivalence~\eqref{eq:equivalence_residual_norm} and the stopping
	criterion~\eqref{eq:single:termination} allow to estimate
	\begin{align}
		\label{eq1:proof:estimator_equivalence}
		\begin{split}
			\enorm{u_\ell^\star& - u_\ell^{\underline{k} }}
			\eqreff*{eq:equivalence_residual_norm}
			\le 
            C_{\textup{ell}}^{-1} \, C_{\textup{pre}} \,
                \norm{\vec{s}^{\underline{k}}_\ell}_{\boldsymbol{P}^{-1}_\ell} 
		\eqreff*{eq:single:termination}
			\le
            C_{\textup{ell}}^{-1} \, C_{\textup{pre}} \,
			\lalg
			\,
			\eta_{\ell}(u_\ell^{\underline{k} })
			=
			C_{\textup{solve}} \,
			\lalg \, \eta_{\ell}(u_\ell^{\underline{k} }).
		\end{split}
	\end{align}
	For any \(\mathcal{U}_\ell \subseteq \TT_\ell\), 
	stability~\eqref{axiom:stability}
	then allows to obtain
	\begin{align}\label{eq3:proof:estimator_equivalence}
		\begin{split}
		\eta_{\ell}(\mathcal{U}_\ell; u_\ell^{\underline{k} })
		\eqreff*{axiom:stability}
		\le
		\eta_{\ell}(\mathcal{U}_\ell; u_\ell^\star)
		+ \Cstab \, \enorm{u_\ell^\star - u_\ell^{\underline{k} }}
		\, &\eqreff*{eq1:proof:estimator_equivalence}\le \,
		\eta_{\ell}(\mathcal{U}_\ell; u_\ell^\star)
		+ \Cstab \, C_{\textup{solve}} \, \lalg \,
		\eta_{\ell}(u_\ell^{\underline{k} })
		\\
		&\le \,
		\eta_{\ell}(\mathcal{U}_\ell; u_\ell^\star)
		+
		\bigl(\lalg / \lalg^\star\bigr) \,
		\eta_{\ell}(u_\ell^{\underline{k} }).
		\end{split}
\end{align}
	Then, the lower bound
	of~\eqref{eq:estimator_equivalence} follows by taking
	$\mathcal{U}_\ell = \TT_{\ell}$, $0 < \lalg < \lalg^\star$,
	and rearranging the latter estimate. 
	Analogously, it follows
	\begin{align*}
		\medmuskip = 2mu
		\begin{split}
			\eta_{\ell}(\mathcal{U}_\ell; u_\ell^\star)
			\eqreff*{axiom:stability}
			\le
			\eta_{\ell}(\mathcal{U}_\ell; u_\ell^{\underline{k} })
			+ \Cstab \, \enorm{u_\ell^\star - u_\ell^{\underline{k} }}
			 \le \
			\eta_{\ell}(\mathcal{U}_\ell; u_\ell^{\underline{k} })
			+
			\bigl(\lalg / \lalg^\star \bigr) \,
			\eta_{\ell}(u_\ell^{\underline{k} }).
		\end{split}
	\end{align*}
	For \(\mathcal{U}_\ell = \TT_{\ell}\), this concludes the proof
	of~\eqref{eq:estimator_equivalence}. 
	Suppose that
	\(\mathcal{R}_{\ell} \subseteq \TT_\ell\)
	satisfies
	\(
		\theta^{1/2} \, \eta_{\ell}(u_\ell^{\underline{k}})
		\le
		\eta_{\ell}(\mathcal{R}_{\ell}; u_\ell^{\underline{k}})
	\).
Using~\eqref{eq3:proof:estimator_equivalence} for $\RR_{\ell}$, we obtain
	\begin{equation*}
		\theta^{1/2} \, \eta_{\ell}(u_\ell^{\underline{k}})
		\le
		\eta_{\ell}(\mathcal{R}_{\ell}; u_\ell^{\underline{k}}) \eqreff{eq3:proof:estimator_equivalence}\le \eta_{\ell}(\mathcal{R}_{\ell}; u_\ell^\star) + (\lalg / \lalg^\star) \,\eta_{\ell}(u_\ell^{\underline{k}}).
	\end{equation*}
	Together with~\eqref{eq:estimator_equivalence}, this yields
	\begin{align*}
		(\theta')^{1/2} \,\eta_{\ell}( u_\ell^{\star}) = \frac{\theta^{1/2}-\lalg/\lalg^\star}{1+ \lalg/\lalg^\star} \,
		\eta_{\ell}( u_\ell^{\star})
		\eqreff{eq:estimator_equivalence}
		\le
		[\theta^{1/2}- \lalg/\lalg^\star] \, \eta_{\ell}(u_\ell^{\underline{k}})
		\leq
		\eta_{\ell}(\mathcal{R}_{\ell}; u_\ell^{\star}).
	\end{align*}
	For the reverse implication, we assume $\theta_{\textup{mark}}^{1/2}\eta_{\ell}(u_\ell^{\star}) \le \eta_{\ell}(\mathcal{R}_{\ell}; u_\ell^{\star})$ and similarly obtain
	\begin{align*}
		\bigl[
			1
			&- \lalg / \lalg^\star
		\bigr] \,
		\theta_{\textup{mark}}^{1/2}
		\eta_{\ell}( u_\ell^{\underline{k} })
		\eqreff{eq:estimator_equivalence}
		\le
		\theta_{\textup{mark}}^{1/2} \, \eta_{\ell}(u_\ell^\star)
		\le
		\eta_{\ell}(\mathcal{R}_{\ell}; u_\ell^\star)
		\\
		&
		\le
		\eta_{\ell}(\mathcal{R}_{\ell}; u_\ell^{\underline{k} })
		+ (\lalg / \lalg^\star) \,
		\eta_{\ell}(u_\ell^{\underline{k} })
		\eqreff{eq:thetamark}
		=
		\eta_{\ell}(\mathcal{R}_{\ell}; u_\ell^{\underline{k} })
		+
		\Big(
			\theta_{\textup{mark}}^{1/2} \,
			\bigl[
				1
				- \lalg / \lalg^\star
			\bigr]
			- \theta^{1/2}
		\Big) \,
		\eta_{\ell}(u_\ell^{\underline{k} }).
	\end{align*}
	This yields
	\(
		\theta \, \eta_{\ell}(u_\ell^{\underline{k} })^2
		\le
		\eta_{\ell}(\mathcal{R}_{\ell}; u_\ell^{\underline{k} })^2
	\)
	and concludes the proof.
\end{proof}

Adapting the proof techniques from~\cite[Proof of Theorem~1]{bfmps2025}, we establish the following \emph{conditional} tail summability result.

\begin{lemma}[conditional tail summability of final iterates]\label{theorem:full_linear_convergence}
Suppose formally that $\tau < 0$ so that Algorithm~\ref{algorithm:unconditional_afem_gmres} does not terminate. Suppose that the estimator satisfies \eqref{axiom:stability}--\eqref{axiom:reliability} and that the employed preconditioner satisfies~\eqref{eq:equivalence_preconditioner_energy_norm}.
Let $0 < \theta \le 1$, $C_{\textup{mark}} \ge 1$, $u_0^0 \in \mathcal{X}_0$, and $\Calg > 0$ be arbitrary. 
Recall $\lalg^\star$ from Lemma~\ref{lem:estimator_equivalence}.
If there exists $\ell_0 \in \N$ such that the solver parameter \(\lalg > 0\) satisfies $0<\lalg < \lalg^\star \theta^{1/2}$ for all $\ell \geq \ell_0$, then Algorithm~\ref{algorithm:unconditional_afem_gmres} guarantees tail summability of the quasi-error ${\sf H}_\ell^{\underline{k}}$ of the final iterate, i.e., there exist a constant $\Ctail \geq 1$ such that
 \begin{equation}\label{eq:tail:summability}
	\sum_{\ell' = \ell +1}^{\underline{\ell}-1}{\sf H}_{\ell'}^{\underline{k}}
	\le
	\Ctail
	 \, {\sf H}_{\ell}^{\underline{k}}
	\quad \text{for all } 0 \leq \ell < \underline{\ell}.
 \end{equation}
\end{lemma}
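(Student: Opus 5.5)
The plan follows the approach of \cite[Proof of Theorem~1]{bfmps2025}. We work with the \emph{exact} level-wise quasi-error $\Delta_\ell \coloneqq \enorm{u^\star - u_\ell^\star} + \gamma\,\eta_\ell(u_\ell^\star)$ for some small $\gamma>0$ and prove tail summability of $\Delta_\ell$ for $\ell \ge \ell_0$. We then transfer it to ${\sf H}_\ell^{\underline{k}}$ and extend to all $\ell$ via Lemma~\ref{lemma:summability}; the additional property \eqref{eq:add_prop_summability} holds since ${\sf H}_\ell^{\underline{k}}=0$ forces $u_\ell^{\underline{k}}=u^\star$ and $\eta=0$ on all later levels.

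\textbf{Step 1: equivalence ${\sf H}_\ell^{\underline{k}} \simeq \eta_\ell(u_\ell^\star)$ for $\ell\ge\ell_0$.} The termination criterion \eqref{eq:single:termination} gives $\norm{\vec{s}_\ell^{\underline{k}}}_{\vec{P}^{-1}_\ell} \le \lalg\,\eta_\ell(u_\ell^{\underline{k}})$, so ${\sf H}_\ell^{\underline{k}} \simeq \eta_\ell(u_\ell^{\underline{k}})$. Since $\lalg<\lalg^\star\theta^{1/2}\le\lalg^\star$, the estimator equivalence \eqref{eq:estimator_equivalence} yields $\eta_\ell(u_\ell^{\underline{k}})\simeq\eta_\ell(u_\ell^\star)$. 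The constants depend only on $\theta$ through $1-\lalg/\lalg^\star\ge 1-\theta^{1/2}$. The case $\theta=1$ needs $\lalg<\lalg^\star$ and is fine. Reliability \eqref{axiom:reliability} then shows ${\sf H}_\ell^{\underline{k}}\simeq\Delta_\ell$.

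\textbf{Step 2: perturbed estimator contraction.} By \eqref{eq:equivalence_Doerfler_reverse}, the marked set $\mathcal{M}_\ell$ satisfies Dörfler marking for $u_\ell^\star$ with $\theta'>0$. Here $\theta'>0$ is exactly where $\lalg<\lalg^\star\theta^{1/2}$ is needed. The standard argument combining \eqref{axiom:stability}, \eqref{axiom:reduction}, and Young's inequality then gives
\begin{equation*}
\eta_{\ell+1}(u_{\ell+1}^\star)^2 \le q_{\rm est}\,\eta_\ell(u_\ell^\star)^2 + C_{\rm est}\,\enorm{u_{\ell+1}^\star-u_\ell^\star}^2
\end{equation*}
with $q_{\rm est}<1$.

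\textbf{Step 3: summation using quasi-orthogonality \eqref{axiom:orthogonality}.} This is the main obstacle. Without a Pythagoras identity we cannot combine the error terms level by level. Instead I would sum the estimator reduction over $\ell'=\ell,\dots,\ell+N$. The perturbation sum $\sum\enorm{u_{\ell'+1}^\star-u_{\ell'}^\star}^2$ is bounded by $\Corth(N+1)^{1-\delta}\enorm{u^\star-u_\ell^\star}^2 \lesssim (N+1)^{1-\delta}\eta_\ell(u_\ell^\star)^2$ via \eqref{axiom:reliability}. Exploiting that $\sum\eta^2$ appears on both sides with factor $q_{\rm est}<1$, this gives
\begin{equation*}
\sum_{\ell'=\ell}^{\ell+N}\eta_{\ell'}(u_{\ell'}^\star)^2 \lesssim (N+1)^{1-\delta}\,\eta_\ell(u_\ell^\star)^2.
\end{equation*}
The sublinear growth in $N$ then implies R-linear convergence of $\eta_\ell(u_\ell^\star)$. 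Concretely, I would follow the argument of \cite{bfmps2025}: sublinear growth of partial sums yields, for some fixed $N$, a contraction $\eta_{\ell+N}^2\le\frac12\eta_\ell^2$ up to quasi-monotonicity \eqref{eq:quasi-monotonicity}. Combined with the summed bound, this gives $\eta_{\ell'}\le C q^{\ell'-\ell}\eta_\ell$. By Lemma~\ref{lemma:summability}, we obtain tail summability of $\eta_\ell(u_\ell^\star)$ with exponent $\beta=1$ for $\ell\ge\ell_0$.

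\textbf{Step 4: conclusion.} By Step 1, this transfers to ${\sf H}_\ell^{\underline{k}}$ for $\ell\ge\ell_0$. Lemma~\ref{lemma:summability} extends it to all $0\le\ell<\underline{\ell}$, giving \eqref{eq:tail:summability}. The only delicate point is the dependence of $\Ctail$ on the finitely many initial levels, which the lemma allows.
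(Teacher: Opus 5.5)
Your proposal is correct and follows essentially the same route as the paper. Both arguments pass through the same steps: the equivalence ${\sf H}_\ell^{\underline{k}}\simeq\eta_\ell(u_\ell^\star)$ via the termination criterion and \eqref{eq:estimator_equivalence}; transfer of Dörfler marking to $u_\ell^\star$ with $\theta'>0$; the perturbed estimator reduction; quasi-orthogonality plus reliability; and extension to all levels via Lemma~\ref{lemma:summability} together with \eqref{eq:add_prop_summability}.

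The only difference is in passing from the sublinear summed bound to tail summability. The paper cites \cite[Lemma~9]{blp2026} for this step, whereas you carry it out by hand using \eqref{eq:quasi-monotonicity}, which is available here. You could even drop \eqref{eq:quasi-monotonicity} by bounding the minimum of $\eta_{\ell'}(u_{\ell'}^\star)^2$ over a window of length $N$ by its average.
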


\begin{proof}
We first show that \(({\sf H}_{\ell}^{\underline{k}})_{\ell=0}^{\underline{\ell}}\) satisfies assumption~\eqref{eq:add_prop_summability} in Lemma~\ref{lemma:summability}.
Suppose that \({\sf H}_{\ell}^{\underline{k}} = 0\), i.e., \(\eta_{\ell}(u_\ell^{\underline{k}})=0\) and \(\norm{\vec{s}_{\ell}^k}_{\vec{P}_{\ell}^{-1}}=0\).
Then, it follows immediately that \(u_{\ell}^{\underline{k}} = u_{\ell}^\star = u^\star\).
By Céa's lemma~\eqref{eq:cea} and nested iteration, we obtain $u_{\ell}^{\underline{k}} = u_{\ell}^\star = u^\star = u_{\ell'}^\star = u_{\ell'}^{\underline{k}}$ for all $\ell' > \ell$, and hence also \({\sf H}_{\ell'}^{\underline{k}} = 0\) for all \(\ell' > \ell\).
Therefore,
\begin{equation}\label{eq:zero_cond_quasi-error}
 \forall \ell < \ell' < \underline{\ell} : \Big({\sf H}_{\ell}^{\underline{k}} = 0 \; \Longrightarrow \; {\sf H}_{\ell'}^{\underline{k}} = 0 \Big).
\end{equation}

The case $\underline{\ell} < \infty$ directly follows from the proof of Lemma~\ref{lemma:summability} for a constant depending on $\{{\sf H}_{\ell}^{\underline{k}}\}_{\ell=0}^{\underline{\ell}}$.
	Let $\ell \in \mathbb{N}$ with $\ell_0 \leq \ell$ and $(\ell+1,\underline{k}) \in \mathcal{Q}$. 
	The stopping criterion~\eqref{eq:single:termination} yields
	\begin{equation*}
		\eta_{\ell}(u_{\ell}^{\underline{k}}) 
		\le
		\eta_{\ell}(u_{\ell}^{\underline{k}}) +
		\norm{ \vec{s}^{\underline{k}}_{\ell} }_{\boldsymbol{P}^{-1}_\ell}
		\le (1 + \lalg) \,
	\eta_{\ell}(u_{\ell}^{\underline{k}}).
	\end{equation*}
    Hence, estimator equivalence~\eqref{eq:estimator_equivalence} yields the equivalence
	\begin{equation}
	\label{eq:quasi-error-estimator}
	{\sf H}_\ell^{\underline{k}} \eqreff{eq:computable_quasierror}= \eta_{\ell}(u_{\ell}^{\underline{k}}) + \norm{ \vec{s}^{\underline{k}}_{\ell} }_{\boldsymbol{P}^{-1}_\ell} 
	\simeq
	\eta_{\ell}(u_{\ell}^{\underline{k}})
	\eqreff{eq:estimator_equivalence}\simeq
	\eta_{\ell}(u_{\ell}^{\star}).
	\end{equation}
  Lemma~\ref{lem:estimator_equivalence} shows that Dörfler marking~\eqref{eq:doerfler} implies Dörfler marking for the exact discrete solution $u_\ell^\star$ with parameter $\theta' =\theta_{\textup{mark}}$ defined in~\eqref{eq:thetamark_two}, where $\theta' > 0$ since $\lalg < \lalg^\star \theta^{1/2} \leq \lalg^\star$.
	Together with stability~\eqref{axiom:stability}, reduction~\eqref{axiom:reduction}, and $ q_\theta \coloneqq 1 - (1 - \qred^2) \theta'$, we obtain
    \begin{align}
        \begin{split}
        \label{eq:reduction_est}
        \eta_{\ell+1}(u_\ell^{\star})^2
        \quad &\stackrel{\mathclap{\eqref{axiom:stability}, \eqref{axiom:reduction}}}{\leq} \quad
        \eta_\ell(\TT_{\ell+1} \cap \TT_\ell; u_\ell^{\star})^2
        +
        \qred^2 \,
        \eta_\ell(\TT_\ell \setminus \TT_{\ell+1}; u_\ell^{\star})^2
		\\
        &= \,
        \eta_\ell(u_\ell^{\star})^2
        -
        (1 - \qred^2) \,
        \eta_\ell(\TT_\ell \setminus \TT_{\ell+1}; u_\ell^{\star})^2
        \\
        &\le \,
        \eta_\ell(u_\ell^{\star})^2
        -
        (1 - \qred^2) \,
        \eta_\ell(\mathcal{M}_\ell; u_{\ell}^{\star})^2
       \eqreff{eq:doerfler}{\leq} 
       q_\theta^2 \, \eta_\ell(u_\ell^{\star})^2.
        \end{split}
    \end{align}
    Applying stability~\eqref{axiom:stability} again yields 
    \begin{align}
		\begin{split}
    \label{eq:single:estimator-reduction}
        \eta_{\ell+1}(u_{\ell+1}^{\star})
        &\eqreff{axiom:stability}\leq
        \eta_{\ell+1}(u_\ell^{\star})
        +
        \Cstab \,
        \enorm{
            u_{\ell+1}^{\star} - u_\ell^{\star}
		} 
        \eqreff{eq:reduction_est}\leq
        q_\theta \,
        \eta_\ell(u_\ell^{\star})
        +
        \Cstab \,
        \enorm{
            u_{\ell+1}^{\star} - u_\ell^{\star}
		}
		\end{split}
	\end{align}
	for all $\ell_0 \leq \ell < \underline{\ell}$.
	Using quasi-orthogonality~\eqref{axiom:orthogonality} and reliability~\eqref{axiom:reliability}, we can estimate
	\begin{align}\label{eq:single:orthogonality}
		\begin{split}
			\sum_{\ell' = \ell}^{\ell+N} 
			\enorm{ u_{\ell'+1}^\star - u_{\ell'}^\star }^2
			\stackrel{\eqref{axiom:orthogonality}}\lesssim
			(N+1)^{1-\delta} \, \enorm{u^\star - u_\ell^\star}^2
			\stackrel{\eqref{axiom:reliability}}\lesssim
			(N+1)^{1-\delta} \, \eta_\ell(u_\ell^\star)^2.
		\end{split}
	\end{align}
	for all $0 \le \ell \le \ell+N < \underline{\ell}.$
	Hence, we can apply \cite[Lemma~9]{blp2026} for $\ell_0 \leq \ell$ and conclude 
	tail summability of $\{{\sf H}_\ell^{\underline{k}}\}_{\ell \geq \ell_0}$, i.e.,
	\begin{equation}\label{eq:single:summability-ell}
		\boxed{\sum_{\ell' 
		= \ell+1}^{\underline{\ell} - 1} {\sf H}_{\ell'}^{\underline{k}}
			\lesssim {\sf H}_\ell^{\underline{k}}
			\quad \text{for all } \ell_0 \le \ell < \underline{\ell}.}
	\end{equation}
	For a potentially larger constant depending also on $({\sf H}_{\ell}^{\underline{k}})_{\ell=0}^{\ell_0}$,~\eqref{eq:zero_cond_quasi-error} and Lemma~\ref{lemma:summability} yield tail summability of ${\sf H}_\ell^{\underline{k}}$ for all levels $0 \leq \ell < \underline{\ell}$, which concludes the proof.
\end{proof}

The following result guarantees that $\Calg$ and $\lalg$ are updated only finitely often.

\begin{lemma}[bounded number of updates of \(\boldsymbol{\Calg}\) and \(\boldsymbol{\lalg}\)]\label{lemma:bounded_updates}
Suppose formally that $\tau < 0$ so that Algorithm~\ref{algorithm:unconditional_afem_gmres} does not terminate. Suppose that the estimator satisfies \eqref{axiom:stability}--\eqref{axiom:reliability} and that the employed preconditioner satisfies~\eqref{eq:equivalence_preconditioner_energy_norm}.
Let $0 < \theta \le 1$, $C_{\textup{mark}} \ge 1$, $u_0^0 \in \mathcal{X}_0$, $\lalg > 0$, and $\Calg > 0$ be arbitrary. 
Then, the input parameters \(\Calg\) and \(\lalg\) in Algorithm~\ref{algorithm:unconditional_afem_gmres} are updated only finitely many times.
\end{lemma}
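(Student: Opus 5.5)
Every update in step~(\ref{alg2:ii:a}) halves $\lalg$ and doubles $\Calg$. The proof will therefore be by contradiction: suppose the updates happen infinitely often, use the halvings to bring $\lalg$ below the threshold of Lemma~\ref{theorem:full_linear_convergence}, and then show that the resulting tail summability (equivalently, inverse summability) leaves room for only finitely many more updates, contradicting the assumption.

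\emph{Step 1: forcing the conditional regime.} If there are infinitely many updates, then $\lalg \to 0$ monotonically. Hence there is a level $\ell_0 \in \N$ with $\lalg < \lalg^\star\theta^{1/2}$ for all $\ell \geq \ell_0$; only the value of $\lalg$ in force on each level matters for steps~(\ref{alg2:single_i})--(\ref{alg2:single_iv}). Lemma~\ref{theorem:full_linear_convergence} then gives tail summability~\eqref{eq:tail:summability} of ${\sf H}_\ell^{\underline{k}}$ for all $\ell$. Here I will check that the proof of that lemma goes through when $\lalg$ varies with $\ell$ but stays below the threshold for $\ell\ge\ell_0$. This should be fine: the equivalences~\eqref{eq:estimator_equivalence} and the Dörfler transfer~\eqref{eq:equivalence_Doerfler_reverse} only improve as $\lalg$ decreases, so uniform constants for $\ell \ge \ell_0$ are available. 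Also $\underline{\ell}=\infty$, since $\tau<0$.

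\emph{Step 2: inverse summability.} By~\eqref{eq:zero_cond_quasi-error}, Lemma~\ref{lemma:summability} with $\beta=\gamma=1$ turns the tail summability into inverse summability~\eqref{eq:inverse_summability}. There is one subtlety: if some ${\sf H}_\ell^{\underline{k}}=0$, then step~(\ref{alg2:single:ii}) would terminate even for $\tau<0$ only if we allow equality; in any case the inverse is undefined. So I will note that under the standing assumption all ${\sf H}_\ell^{\underline{k}}>0$, or treat this trivial case separately. Inverse summability provides a constant $C_\gamma$ with
\begin{equation*}
\sum_{j=0}^{\ell-1}({\sf H}_j^{\underline{k}})^{-1} \le C_\gamma ({\sf H}_\ell^{\underline{k}})^{-1}
\quad\text{for all } \ell\in\N_0 .
\end{equation*}

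\emph{Step 3: counting updates.} At the time of the test in step~(\ref{alg2:ii:a}) on level $\ell$, the counter is ${\sf S}=\sum_{j=0}^{\ell-1}({\sf H}_j^{\underline{k}})^{-1}$, since the update~(\ref{alg2:ii:b}) follows the test. The test triggers only if ${\sf S} > \Calg({\sf H}_\ell^{\underline{k}})^{-1}$, which by Step~2 forces $\Calg < C_\gamma$. Each update doubles $\Calg$, so once $\Calg \ge C_\gamma$ no further update can occur. This happens after at most $\lceil \log_2(C_\gamma/\Calg^{\rm init})\rceil$ further updates, contradicting the assumption of infinitely many updates.

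\emph{Main obstacle.} The delicate point is Step~1. The constant $C_\gamma$ from Lemma~\ref{theorem:full_linear_convergence} and Lemma~\ref{lemma:summability} depends on the early quasi-errors $({\sf H}_\ell^{\underline{k}})_{\ell\le\ell_0}$, which are themselves produced by the changing parameters. Nevertheless, once the infinite run of the algorithm is fixed, this is simply one fixed sequence. Its constant $C_\gamma$ is finite, while $\Calg\to\infty$ along the assumed infinite update sequence, and this yields the contradiction. I will phrase the argument exactly this way, with the sequence fixed first and the constants taken afterwards, to avoid circularity.
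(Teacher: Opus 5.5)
Your proposal is correct and follows the paper's proof. Assuming infinitely many updates forces $\lalg \to 0$, so Lemma~\ref{theorem:full_linear_convergence} applies from some $\ell_0$ on, and Lemma~\ref{lemma:summability} turns tail summability into inverse summability with one fixed constant; this cannot coexist with the trigger ${\sf S} > \Calg ({\sf H}_\ell^{\underline{k}})^{-1}$ once the repeatedly doubled $\Calg$ exceeds that constant, and your Step~3 is just an explicit version of the paper's subsequence argument with $C_{\ell(n)} \to \infty$. Two side remarks: your re-check for $\ell$-dependent $\lalg$ is unnecessary, because Lemma~\ref{theorem:full_linear_convergence} is already stated for $\lalg$ varying with $\ell$ below the threshold for $\ell \ge \ell_0$; and with $\tau<0$ the stopping test in step~(ii) can never fire, so it does not exclude the degenerate case ${\sf H}_\ell^{\underline{k}}=0$ (the paper glosses over that case as well).
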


\begin{proof}
We argue by contradiction and assume that the parameters \(\Calg\) and \(\lalg\) are updated infinitely many times. 
Then, there exists a subsequence $\ell(n)$ such that 
\begin{equation}\label{eq:bad_subsequence}
	\sum_{j=0}^{\ell(n)-1} ({\sf H}_{j}^{\underline{k}})^{-1} > C_{\ell(n)} \, ({\sf H}_{\ell(n)}^{\underline{k}})^{-1} \quad \text{for all } n \in \N, 
\end{equation}
where $C_{\ell(n)} \rightarrow \infty$ for $n \to \infty$.
Since \(\lalg \to 0\) as \(\ell \to \infty\) even for the whole sequence, there exists an $\ell_0 \in \N$ such that $\lalg < \lalg^\star \theta^{1/2}$ for all $\ell \geq \ell_0$.
Consequently, the assumptions of Proposition~\ref{theorem:full_linear_convergence} are satisfied and yield tail summability of ${\sf H}_{\ell}^{\underline{k}}$ in the levels $\ell$.
Lemma~\ref{lemma:summability} shows that~\eqref{eq:tail:summability} is equivalent to inverse summability~\eqref{eq:inverse_summability}, i.e.,
\begin{equation*}
    \sum_{j=0}^{\ell-1} ({\sf H}_{j}^{\underline{k}})^{-1} \leq \Cinv \,  ({\sf H}_{\ell}^{\underline{k}})^{-1} \quad \text{for all } \ell \in \N_0
\end{equation*}
for some constant $\Cinv >0$.
This contradicts~\eqref{eq:bad_subsequence} and concludes the proof.
\end{proof}

As a consequence of Lemmas~\ref{theorem:full_linear_convergence} and~\ref{lemma:bounded_updates}, adapting the proof techniques from~\cite[Proof of Theorem~2]{bfmps2025} yields unconditional full R-linear convergence of Algorithm~\ref{algorithm:unconditional_afem_gmres}.

\begin{theorem}[unconditional full R-linear convergence of Algorithm~\ref{algorithm:unconditional_afem_gmres}]\label{theorem:unconditional_full_R_linear_convergence}
Suppose formally that $\tau < 0$ so that Algorithm~\ref{algorithm:unconditional_afem_gmres} does not terminate. Suppose that the estimator satisfies \eqref{axiom:stability}--\eqref{axiom:reliability} and that the employed preconditioner satisfies~\eqref{eq:equivalence_preconditioner_energy_norm}.
Let $0 < \theta \leq 1$, $\Cmark \geq 1$, $u_0^0 \in \XX_0$, $\lalg >0$, and $\Calg > 0$ be arbitrary. 
Then, Algorithm~\ref{algorithm:unconditional_afem_gmres} guarantees full R-linear convergence of the quasi-error ${\sf H}_{\ell}^k$ from~\eqref{eq:computable_quasierror}, i.e., there exist constants $0 < \qlin < 1$ and $\Clin > 0$ such that
\begin{equation}\label{eq:unconditional_full_R_linear_convergence}
{\sf H}_\ell^k \le C_{\textup{lin}} q_{\textup{lin}}^{|\ell,k| - |\ell',k'|} \, {\sf H}_{\ell'}^{k'}
\quad \text{for all }(\ell',k'),(\ell,k) \in \mathcal{Q} \text{ with } |\ell',k'| \le |\ell,k|.
\end{equation}
\end{theorem}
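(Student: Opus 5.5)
Lemma~\ref{lemma:bounded_updates} ensures that $\Calg$ and $\lalg$ are updated only finitely often. Hence there is a level $\ell_1$ after which both parameters stay fixed. Along the final iterates, the criterion in Algorithm~\ref{algorithm:unconditional_afem_gmres}\eqref{alg2:ii:a} then fails for every $\ell\ge \ell_1$, i.e.,
\[
\sum_{j=0}^{\ell-1}({\sf H}_j^{\underline{k}})^{-1}\le \Calg\,({\sf H}_\ell^{\underline{k}})^{-1}\quad\text{for all }\ell\ge\ell_1 .
\]
This is inverse summability~\eqref{eq:inverse_summability} with $\gamma=1$ for $\ell\ge\ell_1$. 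Condition~\eqref{eq:add_prop_summability} holds by~\eqref{eq:zero_cond_quasi-error}, so Lemma~\ref{lemma:summability} extends this to all $\ell$ and yields tail summability $\sum_{\ell'>\ell}{\sf H}_{\ell'}^{\underline{k}}\lesssim {\sf H}_\ell^{\underline{k}}$ over the final iterates, with a constant that does not depend on any smallness of $\lalg$. Note that this deliberately avoids needing $\lalg<\lalg^\star\theta^{1/2}$ here; the algorithmic test itself certifies the inverse summability.

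The second step is to pass from the final iterates to all pairs $(\ell,k)\in\mathcal Q$, following \cite[Proof of Theorem~2]{bfmps2025}. I will prove tail summability $\sum_{(\ell',k')>(\ell,k)}{\sf H}_{\ell'}^{k'}\lesssim {\sf H}_\ell^k$ and then invoke Lemma~\ref{lemma:summability} with respect to the total step counter $|\ell,k|$. This requires three ingredients.
\begin{enumerate}[label=(\alph*)]
\item \emph{Within a level:} for $k<\underline{k}[\ell]$ the stopping criterion fails, so $\eta_\ell(u_\ell^k)<\lalg^{-1}\norm{\vec s_\ell^k}_{\vec P_\ell^{-1}}$ and hence ${\sf H}_\ell^k\simeq\norm{\vec s_\ell^k}_{\vec P_\ell^{-1}}$, with a constant depending on the eventually fixed $\lalg$. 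The contraction of Proposition~\ref{lem:contraction_gmres} then gives geometric decay in $k$, so that $\sum_{k'\ge k}{\sf H}_\ell^{k'}\lesssim {\sf H}_\ell^k$. The final term $k'=\underline{k}$ is controlled by stability~\eqref{axiom:stability} and Lemma~\ref{lem:equivalence_residual_norm}, which give ${\sf H}_\ell^{\underline k}\lesssim {\sf H}_\ell^{\underline k-1}$.
\item \emph{Across levels:} the final-iterate tail summability from the first step.
\item \emph{Linking the first iterate to the previous final iterate:} nested iteration $u_{\ell+1}^0=u_\ell^{\underline k}$ together with Lemma~\ref{lemma:aposteriori}, which gives ${\sf H}_{\ell+1}^0\lesssim \eta_{\ell+1}(u_{\ell+1}^\star)+\enorm{u_{\ell+1}^\star-u_\ell^{\underline k}}$. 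I bound this by~\eqref{axiom:stability}, (QM), and the Céa/reliability argument by ${\sf H}_\ell^{\underline k}$, and the intermediate iterates ${\sf H}_{\ell+1}^{k}$ similarly by ${\sf H}_{\ell+1}^0$ up to the contraction.
\end{enumerate}
Splitting the double sum into these pieces yields the desired tail summability.

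The main obstacle is ingredient (c) together with the uniformity of constants. The equivalence ${\sf H}_\ell^k\simeq \enorm{u_\ell^\star-u_\ell^k}$ for nonfinal $k$ involves the current $\lalg$. Moreover, on levels $\ell<\ell_1$ the parameters still change, so all constants must be taken after $\ell_1$, and the finitely many earlier indices are absorbed via Lemma~\ref{lemma:summability}, using~\eqref{eq:zero_cond_quasi-error} extended to all of $\mathcal Q$ to handle vanishing terms. Once tail summability with respect to $|\ell,k|$ holds, statement~\eqref{lemma:iii:linear_conv} of Lemma~\ref{lemma:summability} gives exactly~\eqref{eq:unconditional_full_R_linear_convergence}.
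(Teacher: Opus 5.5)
Your proposal is correct and follows the paper's proof. The failed test in Algorithm~\ref{algorithm:unconditional_afem_gmres}\eqref{alg2:ii:a} after the last update gives inverse summability of the final iterates, which Lemma~\ref{lemma:summability} converts into tail summability; this is then combined with within-level contraction, ${\sf H}_\ell^{\underline{k}}\lesssim {\sf H}_\ell^{\underline{k}-1}$, and ${\sf H}_{\ell+1}^0\lesssim{\sf H}_\ell^{\underline{k}}$ via a geometric series, exactly as in the paper.

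The deviations are minor: you bound ${\sf H}_{\ell+1}^0$ via \eqref{eq:quasi-monotonicity} and Céa/reliability instead of \eqref{axiom:reduction} and \eqref{axiom:orthogonality} with $N=0$. Also, since $\lalg$ is only ever halved, the $\lalg$-dependent constant in your within-level step depends only on the final value of $\lalg$ and is therefore uniform over all levels, so the separate absorption of early levels is not needed there.
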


\begin{proof}
The proof is split into two steps.

\textbf{ Step~1 (tail summability with respect to $\boldsymbol{\ell}$).}
Lemma~\ref{lemma:bounded_updates} guarantees that \(\Calg\) and \(\lalg\) are updated only finitely often, i.e., there exists \(\ell_0 \in \N_0\) and \(C > 0\) such that 
\begin{equation}\label{eq:tail_summability}
    \sum_{j=0}^{\ell-1} ({\sf H}_{j}^{\underline{k}})^{-1} \leq  C \,  ({\sf H}_{\ell}^{\underline{k}})^{-1} \quad \text{for all } \ell_0 \leq \ell \in \N_0.
\end{equation}
For a potentially bigger constant \(C > 0\) depending also on $({\sf H}_{\ell}^{\underline{k}})_{\ell=0}^{\ell_0}$,~\eqref{eq:zero_cond_quasi-error} and Lemma~\ref{lemma:summability} yield tail summability of ${\sf H}_{\ell}^{\underline{k}}$ in $\ell$.

\textbf{Step~2 (tail summability with respect 
	to $\boldsymbol{\ell}$ and $\boldsymbol{k}$).}
	First, for $0 \le k < k' < \underline{k}[\ell]$, using that the 
	solver stopping criterion~\eqref{eq:single:termination} has not been met, 
	we have
	\begin{align*}
		{\sf H}_\ell^{k'}
		&\eqreff{eq:computable_quasierror}=
		 \eta_\ell(u_\ell^{k'}) + \norm{\vec{s}^{k'}_\ell }_{\boldsymbol{P}^{-1}_\ell} 
		\stackrel{\eqref{eq:single:termination}}
		\lesssim
		\norm{ \vec{s}^{k'}_\ell }_{\boldsymbol{P}^{-1}_\ell} 
		\stackrel{\eqref{eq:contr_gmres}}{\le}
		q_{\textup{alg}}^{k'-k} \, \norm{ \vec{s}^{k}_\ell }_{\boldsymbol{P}^{-1}_\ell} 
		\le
		q_{\textup{alg}}^{k'-k} \, {\sf H}_\ell^k.
	\end{align*}
	Second, stability~\eqref{axiom:stability}, equivalence~\eqref{eq:equivalence_residual_norm}, and contraction~\eqref{eq:contr_gmres} prove that
	\begin{align*}
			{\sf H}_\ell^{\underline{k}}
			\, & \eqreff*{eq:computable_quasierror}{=} \,
			\eta_\ell(u_\ell^{\underline{k}}) +
			\norm{ \vec{s}^{\underline{k}}_\ell }_{\boldsymbol{P}^{-1}_\ell}  
			\eqreff*{axiom:stability}\lesssim
			\eta_\ell(u_\ell^{\underline{k}-1})  +
			\enorm{ u_\ell^{\underline{k}} - u_\ell^{\underline{k}-1} } + 
			\norm{ \vec{s}^{\underline{k}}_\ell }_{\boldsymbol{P}^{-1}_\ell} \\
			\, &\eqreff*{eq:equivalence_residual_norm}\lesssim \,
			\eta_\ell(u_\ell^{\underline{k}-1})  +
			\norm{ \vec{s}^{\underline{k}}_\ell }_{\boldsymbol{P}^{-1}_\ell} + \norm{ \vec{s}^{\underline{k}-1}_\ell }_{\boldsymbol{P}^{-1}_\ell}
			\eqreff*{eq:contr_gmres}\lesssim
			\eta_\ell(u_\ell^{\underline{k}-1})  +
			\norm{ \vec{s}^{\underline{k}-1}_\ell }_{\boldsymbol{P}^{-1}_\ell}
			=
			{\sf H}_\ell^{\underline{k}-1} 
	\end{align*}
	for all $(\ell,\underline{k}) \in \mathcal{Q}.$
	Hence, we may conclude
	\begin{equation}\label{eq:single:contraction-Lambda}
		\boxed{{\sf H}_\ell^{k'}
			\lesssim
			q_{\textup{alg}}^{k'-k} \, {\sf H}_\ell^k
			\quad \text{for all } 0 \le k \le k' \le \underline{k}[\ell].}
	\end{equation}
	Nested iteration $u_{\ell+1}^0 = u_\ell^{\underline{k}}$, 
	reduction~\eqref{axiom:reduction}, and~\eqref{eq:single:orthogonality} with $N=0$, proves
		\begin{align*}
		\begin{split}
		{\sf H}_{\ell+1}^0 &= 
		\eta_{\ell+1}(u_{\ell+1}^{0})
		+
		\norm{ \vec{s}^{0}_{\ell+1} }_{\boldsymbol{P}^{-1}_{\ell+1}}  
		\eqreff{eq:equivalence_residual_norm}\lesssim
		\eta_{\ell+1}(u_{\ell}^{\underline{k}})
		+
		\enorm{ u_{\ell+1}^\star - u_\ell^{\underline{k}} }
		\\
		&
		\stackrel{\eqref{eq:reduction_est}}{\lesssim} 
		\eta_{\ell}(u_{\ell}^{\underline{k}})
		+
		\enorm{ u_{\ell+1}^\star - u_\ell^{\underline{k}} }
		\le {\sf H}_\ell^{\underline{k}}
		+ \enorm{ u_{\ell+1}^\star - u_\ell^\star }
			\eqreff{eq:single:orthogonality}{\lesssim} {\sf H}_\ell^{\underline{k}} + \eta_{\ell}(u_{\ell}^{\star}) \eqreff{eq:apost_quasierror}\simeq {\sf H}_\ell^{\underline{k}}
		\end{split}
	\end{align*}
	for all $(\ell,\underline{k}) \in \mathcal{Q}$.
	This leads to
	\begin{equation}\label{eq2:single:contraction-Lambda}
		\boxed{{\sf H}_{\ell+1}^0
			\lesssim {\sf H}_\ell^{\underline{k}}
			\quad \text{for all } (\ell,\underline{k}) \in \mathcal{Q}.}
	\end{equation}
	Overall, the geometric series proves tail summability
	\begin{align*}
		\sum_{\substack{(\ell',k') 
		\in \mathcal{Q} \\ |\ell',k'|> |\ell,k|}} {\sf H}_{\ell'}^{k'}
		& =
		\sum_{k'=k+1}^{\underline{k}[\ell]} {\sf H}_\ell^{k'}
		+ \sum_{\ell' 
		= \ell+1}^{\underline{\ell}} \sum_{k'=0}^{\underline{k}[\ell']} 
		{\sf H}_{\ell'}^{k'}
		\\[-2mm]&
		\stackrel{\mathclap{\eqref{eq:single:contraction-Lambda}}}\lesssim
		{\sf H}_\ell^k + \sum_{\ell' = \ell+1}^{\underline{\ell}} 
		{\sf H}_{\ell'}^0
		\stackrel{\eqref{eq2:single:contraction-Lambda}}\lesssim
		{\sf H}_\ell^k + \sum_{\ell' = \ell}^{\underline{\ell}-1} 
		{\sf H}_{\ell'}^{\underline{k}}
		\stackrel{\eqref{eq:single:summability-ell}}\lesssim
		{\sf H}_\ell^k + {\sf H}_\ell^{\underline{k}}
		\stackrel{\eqref{eq:single:contraction-Lambda}}\lesssim
		{\sf H}_\ell^k
		\quad \text{for all } (\ell,k) \in \mathcal{Q}.
	\end{align*}
	Finally, Lemma~\ref{lemma:summability} concludes the proof of full R-linear convergence~\eqref{eq:unconditional_full_R_linear_convergence}.
\end{proof}

\begin{corollary}[unconditional plain convergence of Algorithm~\ref{algorithm:unconditional_afem_gmres}]\label{corollary:convergence}
Under the assumptions of Theorem~\ref{theorem:unconditional_full_R_linear_convergence}, there holds convergence 
	\begin{equation}\label{eq:plain:convergence}
		\enorm{u^\star- u_\ell^k} + \enorm{u^{\star}-u_{\ell}^{\star}} + \enorm{u_\ell^{\star}-u_{\ell}^k} + \eta_{\ell}(u_{\ell}^{\star}) + \eta_{\ell}(u_{\ell}^k) \rightarrow 0 \quad \text{as }		
        |\ell,k| \rightarrow \infty.
	\end{equation}
\end{corollary}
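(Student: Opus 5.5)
The plan is to derive \eqref{eq:plain:convergence} directly from Theorem~\ref{theorem:unconditional_full_R_linear_convergence} combined with the a-posteriori equivalences of Lemma~\ref{lemma:aposteriori}. Full R-linear convergence~\eqref{eq:unconditional_full_R_linear_convergence} with $(\ell',k') = (0,0)$ gives
\begin{equation*}
{\sf H}_\ell^k \le C_{\textup{lin}} \, q_{\textup{lin}}^{|\ell,k| - |0,0|} \, {\sf H}_0^0 \to 0 \quad \text{as } |\ell,k| \to \infty,
\end{equation*}
because $0<q_{\textup{lin}}<1$ and ${\sf H}_0^0<\infty$ is a fixed number. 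Since $\tau<0$, the index set $\mathcal{Q}$ is infinite, so the limit $|\ell,k|\to\infty$ makes sense.

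Each of the five terms in~\eqref{eq:plain:convergence} is then bounded by a constant multiple of ${\sf H}_\ell^k$, with constants independent of $(\ell,k)$:
\begin{itemize}
\item $\eta_\ell(u_\ell^k) \le {\sf H}_\ell^k$ follows directly from the definition~\eqref{eq:computable_quasierror}.
\item $\eta_\ell(u_\ell^\star) + \enorm{u_\ell^\star - u_\ell^k} \le C_{\textup{eq}} \, {\sf H}_\ell^k$ is the lower bound in~\eqref{eq:apost_quasierror}.
\item $\enorm{u^\star - u_\ell^k} \le C'_{\textup{rel}} \, {\sf H}_\ell^k$ is~\eqref{eq:apost_totalerror}.
\item $\enorm{u^\star - u_\ell^\star} \le C_{\textup{rel}} \, \eta_\ell(u_\ell^\star) \lesssim {\sf H}_\ell^k$ follows from reliability~\eqref{axiom:reliability} together with the previous bound.
\end{itemize}
Summing these estimates bounds the whole left-hand side of~\eqref{eq:plain:convergence} by a constant times ${\sf H}_\ell^k$, which tends to zero.

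No step here is a genuine obstacle. The one point to check is that the constants $C_{\textup{eq}}$, $C'_{\textup{rel}}$ and $C_{\textup{rel}}$ do not depend on the adaptively updated parameters $\Calg$ and $\lalg$. They do not: they involve only $C_{\textup{stab}}$, $C_{\textup{ell}}$, $C_{\textup{bnd}}$, $C_{\textup{pre}}$ and $C_{\textup{rel}}$, all of which are fixed by the problem and the preconditioner. The parameter updates affect only $C_{\textup{lin}}$ and $q_{\textup{lin}}$, and those are already fixed by Theorem~\ref{theorem:unconditional_full_R_linear_convergence}.
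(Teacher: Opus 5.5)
Your proposal is correct and follows essentially the paper's proof: you bound every term by a parameter-independent multiple of ${\sf H}_\ell^k$ via reliability~\eqref{axiom:reliability} and Lemma~\ref{lemma:aposteriori}, and then apply full R-linear convergence~\eqref{eq:unconditional_full_R_linear_convergence} with $(\ell',k')=(0,0)$. The only cosmetic difference is that the paper bounds $\enorm{u^\star-u_\ell^k}$ by the triangle inequality instead of citing~\eqref{eq:apost_totalerror}, which amounts to the same estimate.
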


\begin{proof} 
	From reliability~\eqref{axiom:reliability} and~\eqref{eq:apost_quasierror}, we deduce that
	\begin{align*}
		\enorm{u^\star- u_\ell^k} &+ \enorm{u^{\star}-u_{\ell}^{\star}} + \enorm{u_\ell^{\star}-u_{\ell}^k} + \eta_{\ell}(u_{\ell}^{\star}) \leq 2 \big(\enorm{u^{\star}-u_{\ell}^{\star}} + \enorm{u_\ell^{\star}-u_{\ell}^k}\big) + \eta_{\ell}(u_{\ell}^{\star}) \\
		& \eqreff{axiom:reliability}\leq (1+ 2 \Crel) \, \eta_{\ell}(u_{\ell}^{\star}) + 2 \, \enorm{u_\ell^{\star}-u_{\ell}^k} \eqreff{eq:apost_quasierror}\leq \max\{1+ 2 \Crel, 2\} \, C_{\textup{eq}} \, {\sf H}_\ell^k.
	\end{align*}
	Full R-linear convergence~\eqref{eq:unconditional_full_R_linear_convergence} for 
	$(\ell',k') = (0,0) \in \mathcal{Q}$ concludes the proof of~\eqref{eq:plain:convergence} via
    \begin{align*}
		\enorm{u^\star- u_\ell^k} + \enorm{u^{\star}-u_{\ell}^{\star}} &+ \enorm{u_\ell^{\star}-u_{\ell}^k} + \eta_{\ell}(u_{\ell}^{\star}) + \eta_{\ell}(u_{\ell}^k)\leq (1+\max\{1+ 2 \Crel, 2\} \, C_{\textup{eq}})  {\sf H}_\ell^k
    \\
	&\eqreff*{eq:unconditional_full_R_linear_convergence} \le (1+\max\{1+ 2 \Crel, 2\} \, C_{\textup{eq}}) \,
		C_{\textup{lin}} \, q_{\textup{lin}}^{|\ell,k|} \, {\sf H}_{0}^{0} \xrightarrow{|\ell,k| \rightarrow \infty} 0. \qedhere
	\end{align*}
\end{proof}


\section{Optimal complexity of Algorithm~\ref{algorithm:unconditional_afem_gmres}} \label{section:adaptive_algorithm}

First, we comment on the computational cost of Algorithm~\ref{algorithm:unconditional_afem_gmres}:
The cost of PGMRES for any step $k$ on a level $\ell$ is $O(\# \TT_{\ell})$ as discussed in Remark~\ref{label:remark_cost_PGMRES}.
Calculating the error estimator $\eta_{\ell}(u_{\ell}^{k})$ also has (up to quadrature) cost $O(\# \TT_{\ell})$.
The marking step can be implemented with cost $O(\# \TT_{\ell})$; see~\cite{stevenson2007} for $\Cmark = 2$ and~\cite{pp2020} for $\Cmark = 1$.
Finally, the cost of mesh-refinement by NVB is also $O(\# \TT_{\ell})$; see, e.g.,~\cite{stevenson2008, dgs2023}.
Due to the cumulative structure of Algorithm~\ref{algorithm:unconditional_afem_gmres}, the total computational cost to compute $u_{\ell}^{k}$ is therefore, up to a multiplicative constant, given by
\begin{equation}\label{eq:def_cost}
	\texttt{cost}(\ell,k) \coloneqq \hspace{-0.2cm} \sum_{
				\substack{
					(\ell',k') \in \mathcal{Q}
					\\
					|\ell',k'| \le |\ell,k|
				}
			}
			\hspace{-0.4cm}
			\#\TT_{\ell'}.
\end{equation}

A crucial consequence of full R-linear convergence is the following, which is also implicitly found in ~\cite{ghps2021}. 
If the rate of convergence $s>0$ is achievable with respect to the degrees of freedom, it is
also achievable with respect to the cumulative computational costs. Hence, full
R-linear convergence is the key to optimal complexity.
\begin{corollary}
	[rates = complexity {\cite[Corollary~1]{bfmps2025}}]
	\label{corollary:rates:complexity}
	Suppose full R-linear convergence~\eqref{eq:unconditional_full_R_linear_convergence}. Then, for
	any $s > 0$, it holds that
	\begin{equation}\label{eq:equiv-cost-rates}
		\medmuskip = -4mu
		M(s)
		\hspace{-0.1cm}
		\coloneqq
		\hspace{-0.2cm}
		\sup_{(\ell,k) \in \mathcal{Q}} (\#\TT_\ell)^s \,{\sf H}_\ell^{k}
		\le
		\hspace{-0.2cm}
		\sup_{(\ell,k) \in \mathcal{Q}}
		\Bigl(
			\hspace{-0.2cm}
			\sum_{
				\substack{
					(\ell',k') \in \mathcal{Q}
					\\
					|\ell',k'| \le |\ell,k|
				}
			}
			\hspace{-0.4cm}
			\#\TT_{\ell'}
		\Bigr)^s
		{\sf H}_\ell^{k}
		\le
		\frac{\Clin}{\bigl(1 \ - \ {\qlin}^{1/s}\bigr)^s} \, M(s).
	\end{equation}
	Moreover, there exists $s_0 > 0$ such that $M(s) < \infty$ for all $0 < s \le s_0.\qed$
\end{corollary}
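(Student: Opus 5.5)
The first inequality is elementary, since $\#\TT_\ell$ is one of the summands in the cumulative cost for $(\ell',k')=(\ell,k)$. For the second inequality, I fix $(\ell,k)\in\mathcal{Q}$ and may assume $M(s)<\infty$. For every $(\ell',k')$ with $|\ell',k'|\le|\ell,k|$, the definition of $M(s)$ gives $\#\TT_{\ell'}\le M(s)^{1/s}\,({\sf H}_{\ell'}^{k'})^{-1/s}$. The one subtle point is ${\sf H}_{\ell'}^{k'}=0$. Then full R-linear convergence~\eqref{eq:unconditional_full_R_linear_convergence} forces ${\sf H}_\ell^k=0$, and the claimed bound holds trivially. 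So I may assume all quasi-errors up to $(\ell,k)$ are positive.

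Next I use R-linear convergence in the form ${\sf H}_\ell^k\le\Clin\,\qlin^{|\ell,k|-|\ell',k'|}\,{\sf H}_{\ell'}^{k'}$ to obtain
\[
({\sf H}_{\ell'}^{k'})^{-1/s}\le \Clin^{1/s}\,\qlin^{(|\ell,k|-|\ell',k'|)/s}\,({\sf H}_\ell^k)^{-1/s}.
\]
Summing over all indices with $|\ell',k'|\le|\ell,k|$ turns the powers into a geometric series in $\qlin^{1/s}$. Since $|\cdot,\cdot|$ enumerates $\mathcal{Q}$ injectively, this series is bounded by $(1-\qlin^{1/s})^{-1}$. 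Hence
\[
\sum_{|\ell',k'|\le|\ell,k|}\#\TT_{\ell'}\le M(s)^{1/s}\,\Clin^{1/s}\,(1-\qlin^{1/s})^{-1}\,({\sf H}_\ell^k)^{-1/s}.
\]
Raising this to the power $s$, multiplying by ${\sf H}_\ell^k$, and taking the supremum gives the constant $\Clin/(1-\qlin^{1/s})^s$.

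For the final claim that $M(s)<\infty$ for some $s_0>0$, I use the NVB growth bound $\#\TT_{\ell}\le 2^{\ell}\,\#\TT_0$, with a $d$-dependent constant if necessary, since each element is bisected a bounded number of times per refinement step. I also use $\ell\le|\ell,k|$. R-linear convergence with $(\ell',k')=(0,0)$ gives ${\sf H}_\ell^k\le\Clin\qlin^{|\ell,k|}{\sf H}_0^0$. Therefore $(\#\TT_\ell)^s{\sf H}_\ell^k\lesssim(2^s\qlin)^{|\ell,k|}$, which is bounded once $2^{s}\qlin\le1$, i.e.\ for $s\le s_0:=\log(1/\qlin)/\log 2$.

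The main obstacle is this last step: one needs a per-step mesh-growth bound for NVB, namely that the conforming closure of bisecting every element at most multiplies $\#\TT_\ell$ by a uniform constant. I would cite the NVB literature for this, e.g.~\cite{stevenson2008,dgs2023}. Any such constant $C_{\rm grow}$ can replace $2$ in the definition of $s_0$.
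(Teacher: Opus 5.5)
Your proof is correct and follows the standard argument behind the cited \cite[Corollary~1]{bfmps2025}, which this paper states without proof. The steps match: the trivial first inequality, the geometric series in $\qlin^{1/s}$ obtained from R-linear convergence (with the zero case handled), and exponential mesh growth balanced against the decay $\qlin^{|\ell,k|}$ to get $M(s)<\infty$ for small $s$. The per-step growth bound you flag as the main obstacle also follows directly from the mesh-closure estimate~\eqref{eq:meshclosure}: together with $\#\mathcal{M}_{\ell'}\le\#\TT_{\ell'}$ it gives $\#\TT_\ell \le (1+C_{\textup{mesh}})^{\ell}\,\#\TT_0$ by induction, so no separate NVB son-count result is needed.
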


To describe whether the
solution $u^\star$ can be approximated at rate $s>0$, we use the notion of
nonlinear approximation 
classes~\cite{bbdp02,bdd2004, stevenson2007, ckns2008, cfpp2014} 
\begin{equation*}
	\norm{u^\star}_{\A_s}
	\coloneqq
	\sup_{N \in \N_0}
	\Bigl(
		\bigl( N+1 \bigr)^s
		\min_{\TT_{\rm opt} \in \T_N } \eta_{\rm opt}(u^\star_{\rm opt})
	\Bigr),
\end{equation*}
where $\eta_{\rm opt}(u^\star_{\rm opt})$ is the estimator for
the (unavailable) exact Galerkin solution \(u_{\textup{opt}}^\star\) on
an optimal mesh $\TT_{\rm opt} \in \T_N$. 
The second main result of this work states that for sufficiently small parameters $\theta$ and $\lalg$ the adaptive algorithm
leads to optimal (in the sense of the nonlinear
approximation classes) decrease of the quasi-error with respect to 
overall computational cost.
The proof can be derived following steps analogous to~\cite[Proof of Theorem~8]{ghps2021}. 
For details, we refer to Appendix~\ref{section:appendixA}.

\begin{theorem}[optimal complexity]\label{th:optimal_complexity}
	Consider arbitrary $0 < \theta \le 1$,
	$0 < \lalg$, and arbitrary \(u_0^{0} \in \XX_0\).
	Suppose that the estimator satisfies
	\eqref{axiom:stability}--\eqref{axiom:discrete_reliability} and that the preconditioner satisfies~\eqref{eq:equivalence_preconditioner_energy_norm}.
	Recall $\lalg^\star$ and $\theta_{\textup{mark}}$ from Lemma~\ref{lem:estimator_equivalence}. Let $\theta^\star \coloneqq
			(1 + \Cstab^2 \, \Cdrel^2)^{-1}$.
	Suppose that $\theta$, $\lalg$ (as the input for Algorithm~\ref{algorithm:unconditional_afem_gmres}) are sufficiently small
	in the sense of
	\begin{equation}\label{eq:thetamark_two}
		0 < \lalg < \lalg^\star
		\quad
		\text{and}
		\quad
		0
		<
		\theta_{\textup{mark}}
		=
		\frac{
			(\theta^{1/2}
			+ \, \lalg / {\lalg^\star} )^{2}
			}{
			(1 - \lalg / {\lalg^\star} )^{2}
			}
		<
		\theta^\star
		<
		1, 
\end{equation}
Then, Algorithm~\ref{algorithm:unconditional_afem_gmres} guarantees, 
	for all $s > 0$, that
	\begin{equation}\label{eq:optimal_complexity}
		\copt \, \norm{u^\star}_{\A_s}
		\le
		\sup_{(\ell,k) \in \QQ}
		\Bigl(
			\sum_{
				\substack{
					(\ell', k') \in \QQ
					\\
					|\ell', k'| \le |\ell, k|
					}
				}
				\#\TT_{\ell'}
		\Bigr)^{s} \,
		{\sf H}_\ell^{k}
		\le
		\Copt \, \max\{ \norm{u^\star}_{\A_s}, \, {\sf H}_{0}^{0}\}.
	\end{equation}
	The constant \(\copt > 0\) depends only on \(\Cstab\), the use of
	NVB, and \(s\); while the constant $\Copt$ depends
	only on $\Cstab$, $\Cdrel$, $\Cmark$, $C_{\textup{mesh}}$, 
	\(q_{\textup{alg}}\),
	\(\lalg\), 
	$\theta$, $\# \TT_{0}$, and $s$.
\end{theorem}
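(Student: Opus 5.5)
The plan follows the standard optimality argument of \cite[Proof of Theorem~8]{ghps2021} (see also \cite{cfpp2014,bfmps2025}). By Corollary~\ref{corollary:rates:complexity}, once full R-linear convergence~\eqref{eq:unconditional_full_R_linear_convergence} holds, the middle term in~\eqref{eq:optimal_complexity} is equivalent to $M(s)=\sup_{(\ell,k)\in\QQ}(\#\TT_\ell)^s{\sf H}_\ell^k$. Full R-linear convergence is given by Theorem~\ref{theorem:unconditional_full_R_linear_convergence}. The only subtlety is that $\lalg$ may be halved during the run. Under~\eqref{eq:thetamark_two}, $\lalg<\lalg^\star$, and halving only shrinks $\lalg/\lalg^\star$. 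Hence $\theta_{\rm mark}$ stays below $\theta^\star$ for all levels, and $\lalg$ is bounded below by its final value $2^{-m}\lalg$, where $m$ is the finite number of updates from Lemma~\ref{lemma:bounded_updates}. So it suffices to bound $M(s)$ from above and below.

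\emph{Lower bound.} Use~\eqref{eq:apost_quasierror} to get ${\sf H}_\ell^k\gtrsim\eta_\ell(u_\ell^\star)$. Then, for any $N$, pick $\ell$ with $\#\TT_\ell-\#\TT_0\le N$ maximal. Using $\#\TT_{\ell+1}\lesssim\#\TT_\ell$ (NVB with bounded-length marking) and the fact that $\TT_\ell\in\T_N$, we obtain $(N+1)^s\min_{\T_N}\eta_{\rm opt}\lesssim(\#\TT_\ell)^s\eta_\ell(u_\ell^\star)$. If the sequence stops improving (finitely many levels), we use quasi-monotonicity, or the trivial bound from $\eta=0$. This gives $\copt\norm{u^\star}_{\A_s}\le M(s)$.

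\emph{Upper bound.} We may assume $\norm{u^\star}_{\A_s}<\infty$. First, for each level $\ell$, set $\varepsilon\simeq\eta_\ell(u_\ell^\star)$ and pick an optimal mesh $\TT_\varepsilon$ with $\eta_\varepsilon(u_\varepsilon^\star)\le\kappa\,\eta_\ell(u_\ell^\star)$ and $\#\TT_\varepsilon-\#\TT_0\lesssim\norm{u^\star}_{\A_s}^{1/s}\eta_\ell(u_\ell^\star)^{-1/s}$. Next, take the overlay $\TT_\oplus=\TT_\ell\oplus\TT_\varepsilon$ and combine quasi-monotonicity with discrete reliability~\eqref{axiom:discrete_reliability} and stability~\eqref{axiom:stability}. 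The standard argument shows that $\RR_\ell=\TT_\ell\setminus\TT_\oplus$ satisfies Dörfler marking for $u_\ell^\star$ with any parameter below $\theta^\star$, provided $\kappa$ is small. Since $\theta_{\rm mark}<\theta^\star$, the implication~\eqref{eq:equivalence_Doerfler} of Lemma~\ref{lem:estimator_equivalence} transfers this to Dörfler marking for $u_\ell^{\underline{k}}$ with $\theta$. The minimal-cardinality marking~\eqref{eq:doerfler} then gives $\#\MM_\ell\le\Cmark\#\RR_\ell\lesssim\norm{u^\star}_{\A_s}^{1/s}\eta_\ell(u_\ell^\star)^{-1/s}$. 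Finally, combine this with~\eqref{eq:quasi-error-estimator}, i.e.\ $\eta_\ell(u_\ell^\star)\simeq{\sf H}_\ell^{\underline{k}}$, which holds with constants depending on the bounds for $\lalg$.

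To conclude, apply the NVB closure estimate $\#\TT_\ell-\#\TT_0\le C_{\rm mesh}\sum_{j<\ell}\#\MM_j$ and then R-linear convergence in its inverse-summability form (Lemma~\ref{lemma:summability} with $\gamma=1/s$). This yields $\sum_{j<\ell}({\sf H}_j^{\underline{k}})^{-1/s}\lesssim({\sf H}_\ell^{\underline{k}})^{-1/s}$, hence $(\#\TT_\ell-\#\TT_0)^s{\sf H}_\ell^{\underline{k}}\lesssim\norm{u^\star}_{\A_s}$. Next, pass from $(\ell,\underline{k})$ to arbitrary $(\ell,k)$ via~\eqref{eq:single:contraction-Lambda} and~\eqref{eq2:single:contraction-Lambda}: ${\sf H}_\ell^k\lesssim{\sf H}_{\ell-1}^{\underline{k}}$ and $\#\TT_\ell\lesssim\#\TT_{\ell-1}$. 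The term $\#\TT_0$ is absorbed via $\#\TT_\ell-\#\TT_0+1\simeq\#\TT_\ell$, and small levels via ${\sf H}_0^0$ using R-linear convergence. This produces the $\max\{\norm{u^\star}_{\A_s},{\sf H}_0^0\}$.

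The main obstacle is the constant bookkeeping caused by the adaptive parameter control: the Dörfler transfer and the equivalence~\eqref{eq:quasi-error-estimator} must hold uniformly in $\ell$ although $\lalg$ changes. I would handle this by noting the monotonicity argument above, namely that $\lalg$ only decreases and takes at most $m+1$ values. The remaining risk is whether $\Copt$ honestly depends only on the listed quantities, since $\Clin$ from Theorem~\ref{theorem:unconditional_full_R_linear_convergence} depends on the initial levels before the last update.
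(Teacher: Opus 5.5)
Your proposal is correct and follows essentially the same route as the paper's proof in Appendix~\ref{section:appendixA}. That route is: reduction to $M(s)$ via Corollary~\ref{corollary:rates:complexity}; the lower bound via the maximal $\ell'$ with $\#\TT_{\ell'}-\#\TT_0\le N$; and the upper bound via the set $\mathcal{R}_{\ell'}$ from \cite[Lemma~4.14]{cfpp2014} (which your overlay argument reproves), the transfer~\eqref{eq:equivalence_Doerfler}, mesh closure, and inverse summability. The only bookkeeping difference is that the paper bounds $\#\mathcal{M}_{\ell'}$ by $({\sf H}_{\ell'+1}^{0})^{-1/s}$ and sums directly against full R-linear convergence up to $(\ell,k)$, which spares your extra passage from $(\ell-1,\underline{k})$ to $(\ell,k)$. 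Your remark that halving $\lalg$ only decreases $\theta_{\textup{mark}}$ makes explicit a point the paper leaves implicit. Your caveat that $\Clin$ depends on the levels before the last parameter update applies equally to the paper's own Step~4.
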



\section{Numerical experiments} \label{section:numerical_experiments}

The performance of Algorithm~\ref{algorithm:unconditional_afem_gmres} is tested 
on the L-shaped domain \(\Omega := (-1, 1)^2 \setminus [0,1]\times[-1,0]\) with the 
diffusion coefficient \(\boldsymbol{K} = I_{2\times2}\),
 convection coefficient \(\mathbf{b} := (1, 25)^\top\), and 
 reaction coefficient \(c = 0\). 
The unknown analytical solution \(u^\star \in H^1_0(\Omega)\) satisfies 
\[
-\Delta u^\star + \begin{pmatrix} 1 \\ 25 \end{pmatrix} 
\cdot \nabla u^\star = 1 \quad \text{in} \quad \Omega.
\]
As seen in Figure~\ref{fig:Lshape}, this problem induces refinement in 
the reentrant corner of the domain as well as additionally on the boundary 
layer due to convection.

\begin{figure}
\resizebox{\textwidth}{!}{
		\includegraphics{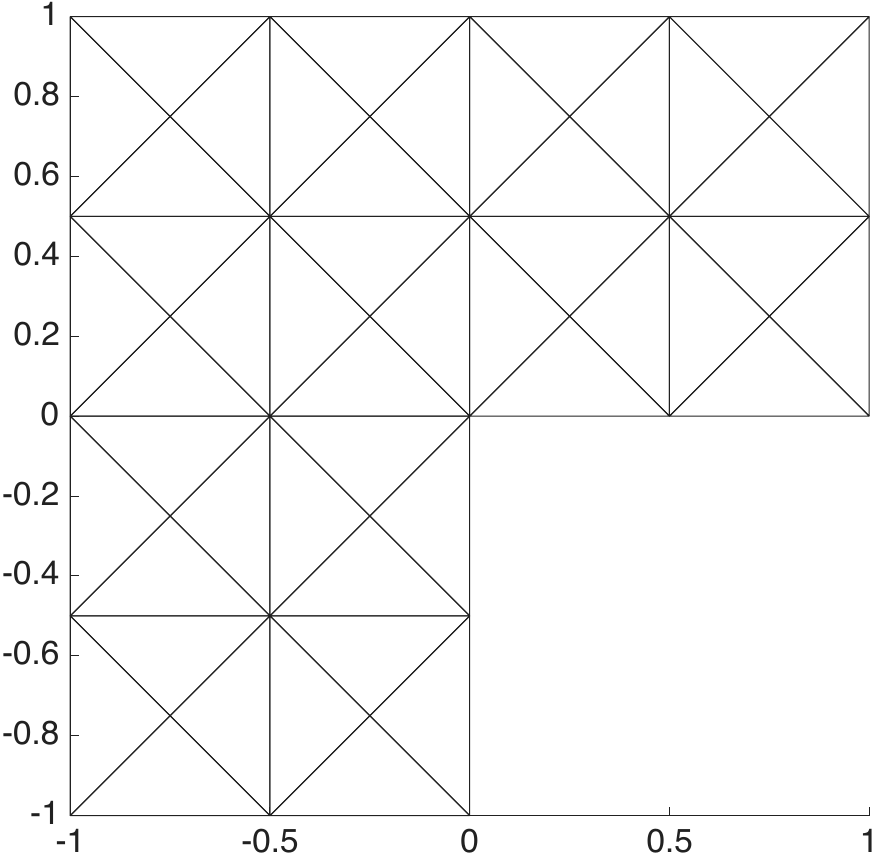} 
		\includegraphics{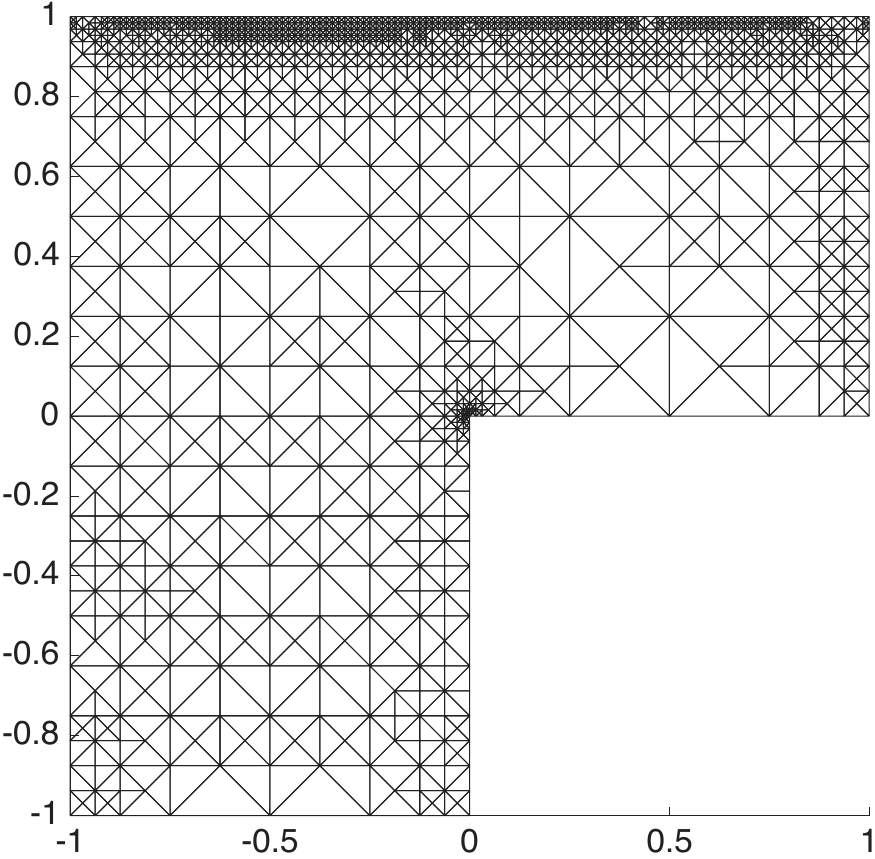}  
		\includegraphics{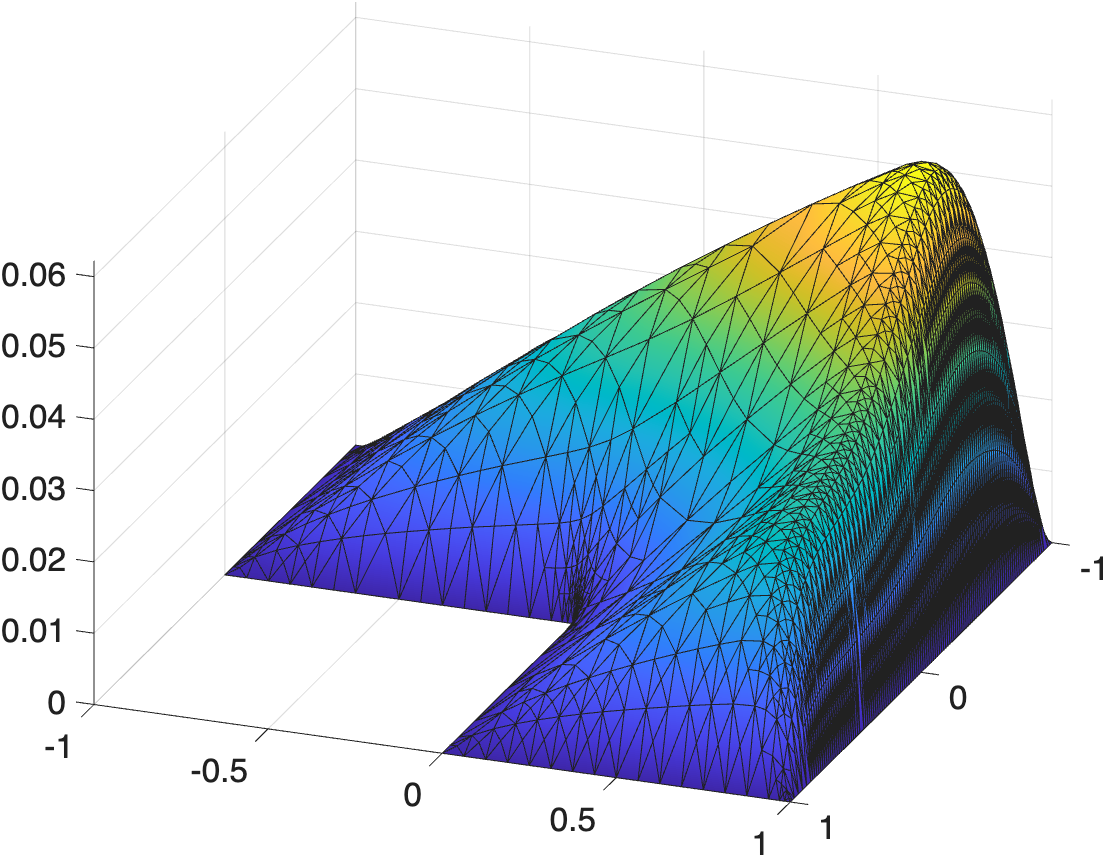}
	}
\caption{\label{fig:Lshape} 
	Initial mesh (left), with \(\# \mathcal{T}_0 = 48\) elements, used to 
	initialize 
	Algorithm~\ref{algorithm:unconditional_afem_gmres} together 
	with polynomial degree \(p = 2\) (leading to \(N_{0} = 81\) initial 
	degrees of freedom), bulk parameter \(\theta = 0.5\), and 
	maximum PGMRES steps
	\(k_{\max} = 5\). The resulting mesh (center) 
	after 10 refinements with \(
	\# \mathcal{T}_{10} = 3,\!396 \) elements (leading to \(N_{10} = 7,\!773\) 
	degrees of freedom) and corresponding approximation 
	\(u_{10}^{\underline{k}}\) (right). }
\end{figure}

\subsection{Optimality of the adaptive algorithm}

In the following, \(k_{\max}\) will denote the number of maximal iteration 
steps of PGMRES. Then, \(\underline{k} [\ell]\) refers to the overall iterations, 
with the understanding that for a given 
maximal iteration steps \(k_{\max}\) of PGMRES, 
\( R = \lfloor k/k_{\max} \rfloor \) denotes the number of 
PGMRES restart
steps.

First, we see in Figure~\ref{fig:opt_rates} that 
Algorithm~\ref{algorithm:unconditional_afem_gmres} indeed exhibits
optimal convergence rates with respect to number of degrees of 
freedom. Next, we are interested in optimal complexity 
of our adaptive algorithm, as stated in 
Theorem~\ref{th:optimal_complexity}. Indeed, we observe that
optimal convergence rates are achieved 
with respect to overall computational cost in the sense of~\eqref{eq:def_cost}, 
see Figure~\ref{fig:opt_cost}, as well as with respect to 
time, see Figure~\ref{fig:opt_comp}. 
In particular, optimal complexity 
holds also for the minimal approach of \(k_{\max} = 1\) PGMRES steps.

\begin{figure}
	\resizebox{\textwidth}{!}{
		\includegraphics{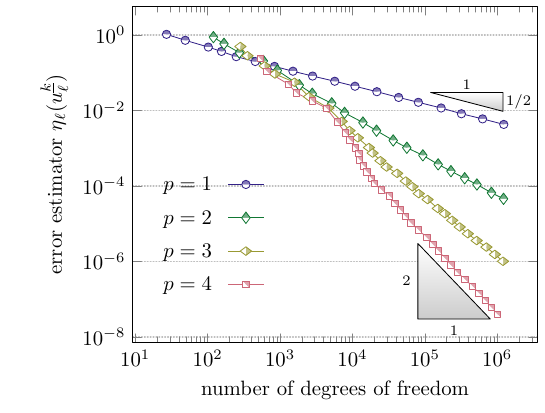}
		\includegraphics{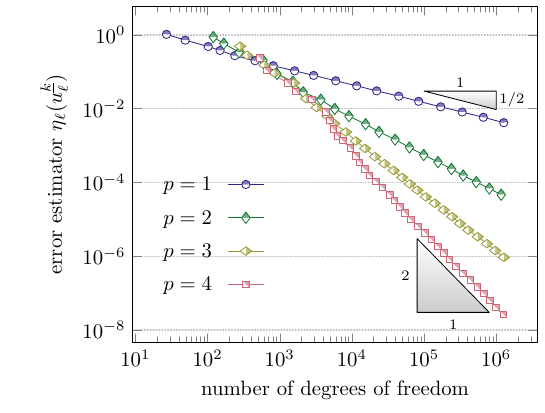}
	}
	\caption{\label{fig:opt_rates} 
	Rate-optimality of the Algorithm~\ref{algorithm:unconditional_afem_gmres} using the additive Schwarz preconditioner from~Section~\ref{sec:AS} 
	for GMRES on the linear system with 
	\(\theta = 0.5\) and \(k_{\max} = 5\) (left), \(k_{\max} = 1\) (right).}
\end{figure}

\begin{figure}
	\resizebox{\textwidth}{!}{
		\includegraphics{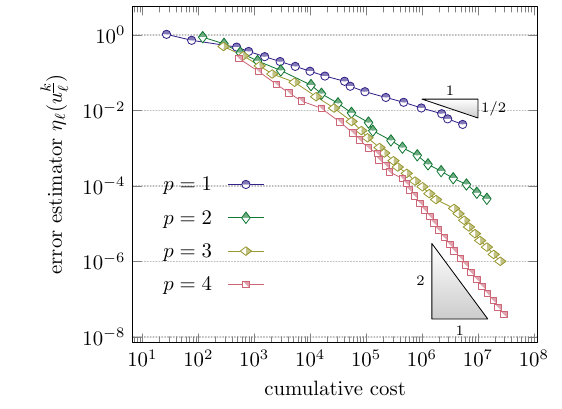}
		\includegraphics{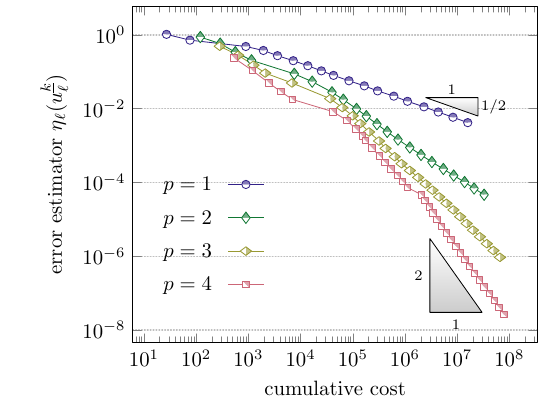} 
	}
	\caption{\label{fig:opt_cost} 
	Cost-optimality of the Algorithm~\ref{algorithm:unconditional_afem_gmres} 
	using the additive Schwarz preconditioner from~Section~\ref{sec:AS} for GMRES on 
	the linear system, 
	\(\theta = 0.5\) and \(k_{\max} = 5\) (left), \(k_{\max} = 1\) (right).}
\end{figure}

\begin{figure}
	\resizebox{\textwidth}{!}{
		\includegraphics{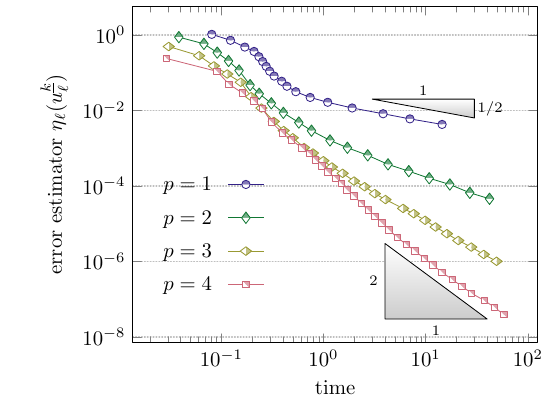}
		\includegraphics{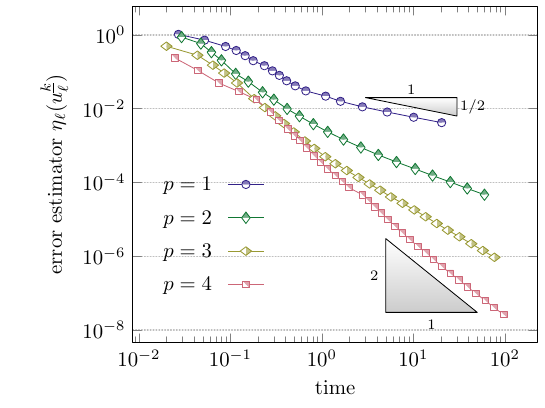}
	}
	\caption{\label{fig:opt_comp} 
	Optimal complexity of the Algorithm~\ref{algorithm:unconditional_afem_gmres} 
	using the
	additive Schwarz preconditioner from~Section~\ref{sec:AS} for 
	GMRES on the linear system, 
	\(\theta = 0.5\) and \(k_{\max} = 5\) (left), \(k_{\max} = 1\) (right).}
\end{figure}

\subsection{Adaptive stopping criteria}

First, we consider the performance of the new 
adaptive control of the parameters \(\Calg\) and \(\lalg\) in step (ii.a) in 
Algorithm~\ref{algorithm:unconditional_afem_gmres}. 
This plays a crucial role in proving unconditional convergence in Theorem~\ref{theorem:unconditional_full_R_linear_convergence}. 
Indeed, we know from Lemma~\ref{lemma:bounded_updates} that the updates can 
only occur finitely often. In practice, Figure~\ref{fig:parameters} 
illustrates that for several polynomial degrees, these parameters only 
need to be updated twice when initialized with 1. Thus, the resulting 
parameter choices appear to be in a standard range (neither too large, nor too
small) for a well-performing 
adaptive algorithm. 
In particular, the update of \(\Calg\) and \(\lalg\) seems to be independent of the choice of 
\(k_{\max}\).

\begin{figure}
\resizebox{\textwidth}{!}{
		\includegraphics{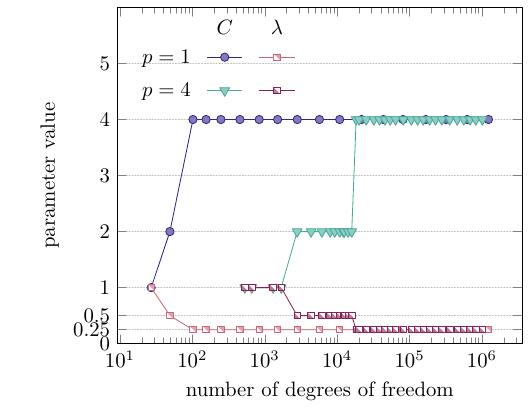} 
		\includegraphics{numerics/Stopping_Kmax5.pdf} 
	}
\caption{\label{fig:parameters} 
	Parameter adaptation in 
	Algorithm~\ref{algorithm:unconditional_afem_gmres}(ii.a) 
	with respect to the degrees of freedom for 
	\(\theta = 0.5\) and
	\(k_{\max} = 5\) (left), \(k_{\max} = 1\) (right). }
\end{figure}

Next, let us discuss the stopping of the algebraic solver. 
While the equibalancing of discretization and algebra 
errors is now ensured by 
the adaptive regulation of the parameter \(\lalg\), we test two different 
numbers $k_{\max}$ of maximal iteration steps for PGMRES. 
As seen in Figure~\ref{fig:restart},
there is a trade-off between a low \(k_{\max}\) and higher number of 
potential restarts and vice-versa. Nonetheless, it is noticeable that 
the total number of performed steps \(\underline{k}\) remains 
generally smaller for
\(k_{\max} = 5\) when compared to \(k_{\max} = 1\).

\begin{figure}
\resizebox{\textwidth}{!}{
		\includegraphics{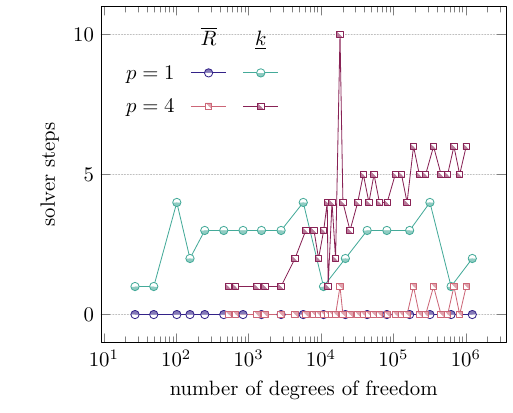} 
		\includegraphics{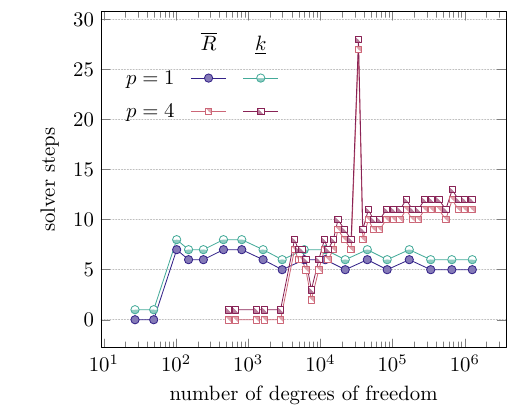} 
	}
\caption{\label{fig:restart} 
	Iteration steps and number of restarts $\overline{R} = \lfloor \underline{k}[\ell] / k_{\max}\rfloor$ for PGMRES 
	with additive Schwarz preconditioner from Section~\ref{sec:AS}
	when employing Algorithm~\ref{algorithm:unconditional_afem_gmres} 
	for \(\theta = 0.5\) and
	\(k_{\max} = 5\) (left) and \(k_{\max} = 1\) (right). }
\end{figure}

\subsection{Solver robustness}

To conduct some experiments tailored on the robustness of the 
iterative solver, we start by pre-computing a mesh hierarchy of 20 levels. 
We do so via adaptive refinement employing \(p=2\) and a direct solver. 
Next, we run optimally preconditioned GMRES via additive Schwarz from 
Section~\ref{sec:AS}, 
from a zero initial guess and stop the iterations when the 
algebraic relative residual drops below \(10^{-5}\). 

Figure~\ref{fig:PGMRES_res} shows overall \(p\)-robust contraction 
factors, 
as expected theoretically on the contraction bound of the preconditioned 
residuals \( \norm{\boldsymbol{r}^k_{\ell}}_{\boldsymbol{P}_\ell^{-1}}  \) from 
Proposition~\ref{lem:contraction_gmres}. Note that, since 
Proposition~\ref{lem:contraction_gmres} 
is ensured at each solver step, we indeed observe it numerically for 
different choices of \(k_{\max}\) of PGMRES, in particular also for 
\(k_{\max} =1\), i.e., we can restart at each step. 
Figure~\ref{fig:PGMRES_res} (left) illustrates how the number of restarts can 
make the contraction factors very oscillatory.
For \(\mod{k}{k_{\max}} = 0\), the Krylov space attains its maximal dimension \(k_{\max}\), so the contraction factor is minimal within the corresponding GMRES run.
After each restart, the contraction factor increases; see Figure~\ref{fig:PGMRES_res} (left).

Experimentally we observe in Figure~\ref{fig:PGMRES_en} (left) 
even better algebraic error contraction in the 
energy norm \( \enorm{ u^\star_{\ell}- u_{\ell}^k } \), 
though this is outside the scope of the theoretic framework. 
Finally, we observe in Figure~\ref{fig:PGMRES_en} the behavior for a 
non-restarted PGMRES. This is done here using \(k_{\max} =300\) which 
leads to a full Arnoldi basis and thus to better contraction as seen 
in Figure~\ref{fig:PGMRES_en} (left) and faster algebraic error decay as seen 
in Figure~\ref{fig:PGMRES_en} (right). 

\begin{figure}
\resizebox{\textwidth}{!}{
		\includegraphics{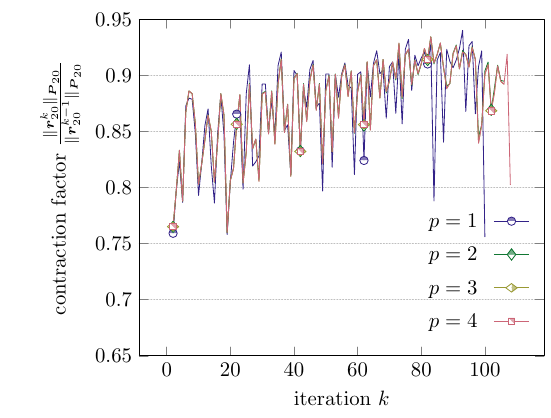} 
		\includegraphics{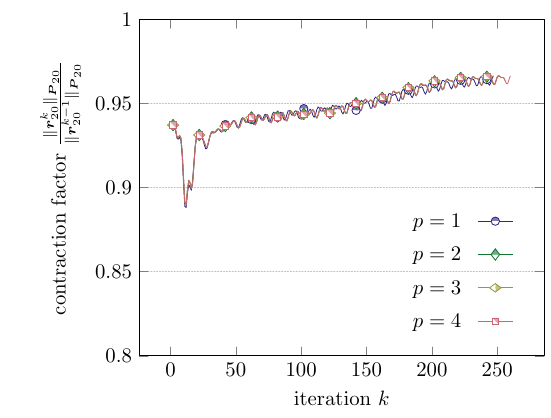} 
	}
\caption{\label{fig:PGMRES_res} 
	Contraction factors of the preconditioned residual for
	\(k_{\max} = 5\) (left) and \(k_{\max} = 1\) (right) for different 
	polynomial degrees on a fixed mesh $\TT_{20}$ (obtained by Algorithm~\ref{algorithm:unconditional_afem_gmres} for $p=2$) with $\# \TT_{20} = 99,\!085$ elements.}
\end{figure}

\begin{figure}
\resizebox{\textwidth}{!}{
		\includegraphics{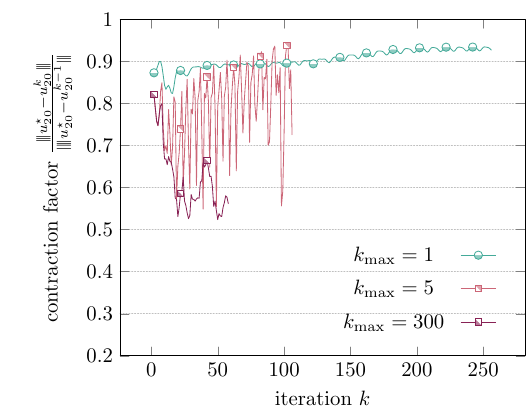} 
		\includegraphics{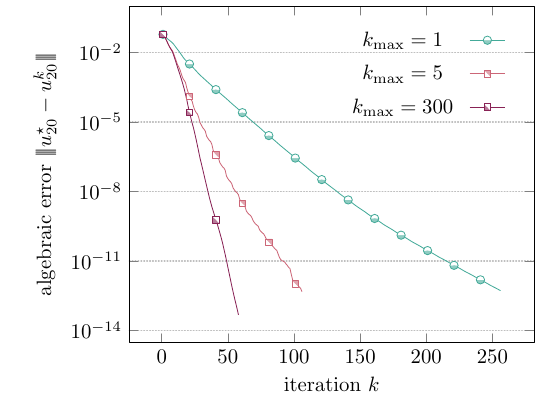} 
	}
\caption{\label{fig:PGMRES_en} 
	Contraction factors (left) and decrease (right) of the 
	algebraic error in energy norm for 
	\(p=3\) and different choices of
	\(k_{\max}\) on a fixed mesh $\TT_{20}$ (obtained by Algorithm~\ref{algorithm:unconditional_afem_gmres} for $p=2$) with $\# \TT_{20} = 99,\!085$ elements.}
\end{figure}

\renewcommand*{\bibfont}{\small} 
\printbibliography

\appendix


\section{Proof of Theorem~\ref{th:optimal_complexity}} \label{section:appendixA}

\begin{proof}[\textbf{Proof of Theorem~\ref{th:optimal_complexity}}]
	Thanks to Corollary~\ref{corollary:rates:complexity}, it suffices to show
	\begin{equation}\label{eq:proof:optimality}
		\norm{ u}_{\A_s}
		\lesssim
		\sup_{(\ell, k) \in \QQ} (\# \TT_{\ell})^s \, {\sf H}_{\ell}^{k}
		\lesssim
		\max \{\norm{ u}_{\A_s}, {\sf H}_{0}^{0}\}.
	\end{equation}

	\medskip
	\textbf{Step~1.} We first show the lower bound
	in~\eqref{eq:proof:optimality} for \(\underline{\ell} = \infty\). 
	In this case,
	Algorithm~\ref{algorithm:unconditional_afem_gmres} leads to \(\# \TT_{\ell} \to \infty\) 
	as
	\(\ell \to \infty\). For any \(N \in \N\), we follow the proof of
	\cite[Proposition~4.15]{cfpp2014} and choose the maximal index
	\(\ell' \in \N_0\) with \(\# \TT_{\ell'} - \# \TT_{0} \le N\). Since NVB
	refinement guarantees
	\(\# \TT_{\ell'+1} \lesssim \# \TT_{\ell'}\), we are led to
	\begin{equation*}
		N + 1 < \# \TT_{\ell'+1} - \# \TT_{0} + 1
		\le
		\# \TT_{\ell'+1}
		\lesssim
		\# \TT_{\ell'}.
	\end{equation*}
	Then, using \(\TT_{\ell'} \in \T\) and~\eqref{eq:apost_quasierror} we
	show for all \((\ell', k') \in \QQ\) that
	\begin{equation*}
		\min_{\TT_{\rm opt} \in \T_N} \eta_{\rm opt}(u_{\rm opt}^\star)
		\le
		\eta_{\ell'}(u_{\ell'}^\star)
		\eqreff{eq:apost_quasierror}\lesssim  {\sf H}_{\ell'}^{k'}.
	\end{equation*}
	A combination of the two previous estimates verifies
	\begin{equation*}
		(N + 1)^s \,
		\min_{\TT_{\rm opt} \in \T_N} \eta_{\rm opt}(u_{\rm opt}^\star)
		\lesssim
		(\# \TT_{\ell'})^s \, {\sf H}_{\ell'}^{k'}
		\lesssim
		\sup_{(\ell, k) \in \QQ} (\# \TT_{\ell})^s \, {\sf H}_{\ell}^{k}.
	\end{equation*}
	Taking the supremum over all \(N \in \N\), we prove the lower
	bound in~\eqref{eq:proof:optimality} for \(\underline{\ell} = \infty\).

	\medskip
	\textbf{Step~2.} We prove the lower bound in
	\eqref{eq:proof:optimality} for \(\underline{\ell} < \infty\).
    Corollary~\ref{corollary:convergence} yields that 
	\(\eta_{\underline{\ell}}(u_{\underline{\ell}}^\star) = 0\) and
	\(u_{\underline{\ell}}^\star = u^\star\). Since \(\norm{u^\star}_{\A_s} = 0\)
	for \(\underline{\ell} = 0\), we may assume \(\underline{\ell} > 0\). 
	Following Step~1, we let
	\(0 \le N < \# \TT_{\underline{\ell}} - \# \TT_{0} \) and 
	choose the maximal index
	\(0 \le \ell' < \underline{\ell}\) with 
	\(\# \TT_{\ell'} - \# \TT_{0} \le N\). With
	the estimates from Step~1, we arrive at
	\begin{equation*}
		\norm{ u^\star }_{\A_s}
		=
		\sup_{0 \le N < \# \TT_{\underline{\ell}} - \# \TT_{0}}
		\bigl(
			(N+1)^s
			\min_{\TT_{\rm opt} \in \T_N} \eta_{\rm opt}(u_{\mathrm{opt}}^\star)
		\bigr)
		\lesssim
		\sup_{(\ell, k) \in \QQ} (\# \TT_{\ell})^s \, {\sf H}_{\ell}^{k}.
	\end{equation*}
	This proves the lower bound
	in~\eqref{eq:proof:optimality} also for \(\underline{\ell} < \infty\).

	\medskip
	\textbf{Step~3.} We proceed to prove the upper bound
	in~\eqref{eq:proof:optimality}. To this end, we may suppose
	\(\norm{ u^\star }_{\A_s} < \infty\) since the result is trivial otherwise.
	Let $(\ell'+1, 0) \in \QQ$. 
	Then, \cite[Lemma~4.14]{cfpp2014} and $0 < \theta_{\textnormal{mark}} < \theta^{\star}$ guarantee
	the existence of a set \(\mathcal{R}_{\ell'} \subseteq \TT_{\ell'}\) 
	such that
	\begin{equation}\label{eq:Rell}
		\# \mathcal{R}_{\ell'}
		\lesssim
		\norm{ u^\star }_{\A_s}^{1/s} \, \eta_{\ell'}(u_{\ell'}^\star)^{-1/s}
		\quad
		\text{and}
		\quad
		\theta_{\textup{mark}} \, \eta_{\ell'}(u_{\ell'}^\star)^2
		\le
		\eta_{\ell'}(\mathcal{R}_{\ell'}; u_{\ell'}^\star)^2.
	\end{equation}
	Note that~\eqref{eq:equivalence_Doerfler} verifies that 
	\(\mathcal{R}_{\ell'}\) also
	satisfies the Dörfler marking criterion~\eqref{eq:doerfler} for $u_{\ell}^{\underline{k}}$ with 
	parameter \(\theta\), i.e.,
	\(
		\theta \, \eta_{\ell'}(u_{\ell'}^{\underline{k}})^2
		\le
		\eta_{\ell'}(\mathcal{R}_{\ell'}; u_{\ell'}^{\underline{k}})^2
	\).
	Therefore, it follows that
	\begin{equation}\label{eq:Doerfler_opt}
		\#\mathcal{M}_{\ell'}
		\le
		C_{\rm mark} \, \# \mathcal{R}_{\ell'}.
	\end{equation}
	Full linear convergence~\eqref{eq:unconditional_full_R_linear_convergence} and estimating as 
	in~\eqref{eq:quasi-error-estimator}, leads to
	\begin{align}\label{eq:Eta_estimator}
			{\sf H}_{\ell'+1}^{0}
			&\eqreff*{eq:unconditional_full_R_linear_convergence}
			\lesssim
			{\sf H}_{\ell'}^{\underline{k}}
			\eqreff{eq:quasi-error-estimator}\le
			\eta_{\ell'}(u_{\ell'}^{\underline{k}}).
	\end{align}
	Combining all estimates and using the estimator
	equivalence~\eqref{eq:estimator_equivalence}, we obtain
	\begin{equation}\label{eq:marked_Eta}
		\medmuskip = -1mu
		\# \mathcal{M}_{\ell'} \,
		\eqreff*{eq:Doerfler_opt}
		\lesssim \,
		\# \mathcal{R}_{\ell'} \,
		\eqreff*{eq:Rell}
		\lesssim \,
		\norm{ u^\star }_{\A_s}^{1/s} \, \eta_{\ell'}(u_{\ell'}^\star)^{-1/s}
		\eqreff*{eq:estimator_equivalence}
		\lesssim
		\norm{ u^\star }_{\A_s}^{1/s} \, 
		\eta_{\ell'}(u_{\ell'}^{\underline{k}})^{-1/s}
		\eqreff*{eq:Eta_estimator}
		\lesssim
		\norm{ u^\star }_{\A_s}^{1/s} \, 
		\bigl({\sf H}_{\ell'+1}^{0}\bigr)^{-1/s} .
	\end{equation}

	\medskip
	\textbf{Step 4.} Consider $(\ell, k) \in \QQ$, then full linear
	convergence~\eqref{eq:unconditional_full_R_linear_convergence} yields that
	\begin{equation}\label{eq:lin_cv_sum}
		\sum_{
			\substack{
				(\ell',k') \in \QQ
				\\
				|\ell',k'| \le |\ell,k|
			}
		}
		({\sf H}_{\ell'}^{k'})^{-1/s}
		\eqreff{eq:unconditional_full_R_linear_convergence}
		\lesssim
		({\sf H}_{\ell}^{k})^{-1/s}
		\sum_{
			\substack{
				(\ell',k') \in \QQ
				\\
				|\ell',k'| \le |\ell,k|
			}
		}
		(\qlin^{1/s})^{|\ell, k| - |\ell', k'|}
		\lesssim
		({\sf H}_{\ell}^{k})^{-1/s}.
	\end{equation}
	Recall that NVB refinement satisfies the mesh-closure estimate, i.e., there
	holds that
	\begin{equation}\label{eq:meshclosure}
		\# \TT_\ell - \# \TT_0
		\le
		C_{\textup{mesh}} \sum_{\ell' = 0}^{\ell-1} \# \mathcal{M}_{\ell'}
		\quad \text{for all }
		\ell \ge 0,
	\end{equation}
	where $C_{\textup{mesh}}  > 1$ depends only on $\TT_{0}$.
	Thus, for $(\ell, k) \in \QQ$ with $\ell > 0$, we have by the above
	formulas~\eqref{eq:marked_Eta}--\eqref{eq:meshclosure} that
	\begin{align*}
		\# \TT_{\ell} - \# \TT_{0} +1
		&\le
		2 \, (\# \TT_{\ell} - \# \TT_{0})
		\eqreff*{eq:meshclosure}
		\le
		2 \, C_{\textup{mesh}}  \sum_{\ell' = 0}^{\ell-1} \# \mathcal{M}_{\ell'}
		\eqreff*{eq:marked_Eta}
		\lesssim \
		\norm{u^\star}_{\A_s}^{1/s}
		\sum_{\ell' = 0}^{\ell-1} ({\sf H}_{\ell'+1}^{0})^{-1/s}
		\\
		&\lesssim
		\norm{u^\star}_{\A_s}^{1/s} \! \!
		\sum_{
			\substack{
				(\ell',k') \in \QQ
				\\
				|\ell',k'| \le |\ell,k|
			}
		}
		({\sf H}_{\ell'}^{k'})^{-1/s}
		\eqreff{eq:lin_cv_sum}
		\lesssim
		\norm{u^\star}_{\A_s}^{1/s} ({\sf H}_{\ell}^{k})^{-1/s}.
	\end{align*}
	Rearranging the terms, we obtain that
	\begin{subequations}\label{eq:Eta_approxclass}
		\begin{equation}
			(\# \TT_{\ell} - \# \TT_{0} + 1)^s \, {\sf H}_{\ell}^{k}
			\lesssim
			\norm{u^\star}_{\A_s}
			\quad \text{for all} \quad
			\ell > 0.
		\end{equation}
		Trivially, full linear convergence proves that
		\begin{equation}
			(\# \TT_{\ell} - \# \TT_{0} + 1)^s \, {\sf H}_{0}^{k}
			=
			{\sf H}_{0}^{k}
			\lesssim
			{\sf H}_{0}^{0}
			\quad \text{for} \quad
			\ell = 0.
		\end{equation}
	\end{subequations}
	Moreover, recall from~\cite[Lemma~22]{bhp2017} that, for all
	$\TT_H \in \T$ and all $\TT_h \in \T(\TT_H)$,
	\begin{equation}\label{eq:bhp-lemma22}
		\# \TT_h - \# \TT_H +1
		\le \# \TT_h
		\le  \# \TT_H \, (\# \TT_h - \# \TT_H +1).
	\end{equation}
	Overall, we thus have shown that
	\begin{equation*}
		(\# \TT_{\ell})^s {\sf H}_\ell^{k}
		\eqreff{eq:bhp-lemma22}
		\lesssim
		(\# \TT_{\ell} - \# \TT_{0} + 1)^s {\sf H}_\ell^{k}
		\eqreff{eq:Eta_approxclass}
		\lesssim
		\max \{\norm{u^\star}_{\A_s}, {\sf H}_{0}^{0}\}
		\
		\text{for all $(\ell, k) \in \QQ$.}
	\end{equation*}
	This concludes the proof of the upper bound in~\eqref{eq:proof:optimality}
	and hence that of~\eqref{eq:optimal_complexity}.
\end{proof}

\end{document}